\colorlet{linkcolour}{blue!70!black}
\colorlet{urlcolour}{magenta}
\newcommand{\cE}{{\cal E}}
\newcommand{\cM}{{\cal M}}
\newcommand{\cR}{{\cal R}}
\newcommand{\mN}{\mathbb{N}}
\newcommand{\mR}{\mathbb{R}}
\newcommand{\Cov}{{\rm Cov}}
\newcommand{\Var}{{\rm Var}}
\theoremstyle{plain}
\newtheorem{theorem}{Theorem}[section]
\newtheorem{proposition}[theorem]{Proposition}
\newtheorem{corollary}[theorem]{Corollary}
\newtheorem{lem}[theorem]{Lemma}
\theoremstyle{definition}
\newtheorem{assumption}[theorem]{Assumption}
\newtheorem*{assA}{Assumption A1}{\bf}{\it}
\theoremstyle{remark}
\newtheorem{remark}[theorem]{Remark}
\newtheorem{example}[theorem]{Example}
\DeclareMathOperator*{\argmax}{arg\,max}
\numberwithin{equation}{section} 
\def\@maketitle{%
  \newpage
  \null
  \vskip 2em%
  \begin{center}%
  \let \footnote \thanks
    {\Large\bfseries \@title \par}%
    \vskip 1.5em%
    {\normalsize
      \lineskip .5em%
      \begin{tabular}[t]{c}%
        \@author
      \end{tabular}\par}%
    \vskip 1em%
    {\normalsize \@date}%
  \end{center}%
  \par
  \vskip 1.5em}
\begin{document}

\title{Panel data segmentation under finite time horizon} 

\author{Leonid Torgovitski\thanks{E-mail: \texttt{ltorgovi@math.uni-koeln.de}} \thanks{Research partially supported by the Friedrich Ebert Foundation, Germany.}}
\affil{Mathematical Institute, University of Cologne,\\ Weyertal 86-90, 50931, Cologne, Germany}
\date{}

\maketitle

\begin{abstract}
We study the nonparametric change point estimation for common changes in the means of panel data. 
The consistency of estimates is investigated when the number of panels tends to infinity but the sample size remains finite. 
Our focus is on weighted denoising estimates, involving the group fused LASSO, and on the weighted CUSUM estimates. 
Due to the fixed sample size, the common weighting schemes do not
guarantee consistency under (serial) dependence and most typical weightings
do not even provide consistency in the i.i.d. setting when the noise is too dominant.

Hence, on the one hand, we propose a consistent covariance-based extension of existing weighting
schemes and discuss straightforward estimates of those weighting schemes. The performance will be demonstrated empirically in a simulation study. 
On the other hand, we derive sharp bounds on the change to noise ratio that ensure consistency in the i.i.d. setting for classical weightings.

\paragraph{Keywords:}
Panel data, Change point estimation, Segmentation, Nonparametric, CUSUM, Total variation denoising, LASSO, Serial dependence

\end{abstract}

\footnotetext[1]{\copyright ~2015, Elsevier. Licensed under the Creative Commons Attribution-NonCommercial-NoDerivatives 4.0 International \url{http://creativecommons.org/licenses/by-nc-nd/4.0/}} 
\footnotetext[2]{The final article is published under \url{http://dx.doi.org/10.1016/j.jspi.2015.05.007}}

\newpage
\section{Introduction}

The aim of this paper is to study the estimation of changes in the context of {\it panel data}. We focus on {\it common changes}, 
i.e. changes that occur simultaneously in many panels (but not necessarily in all) at the same time points and we consider an asymptotic framework 
where the number ~$d$ ~of panels tends to infinity but the panel sample size ~$n$ ~is fixed.

The analysis of change point estimation in panel data is subject of intensive research (in particular in econometrics) and, 
as discussed in \citet{Bai2010}, dates back at least to the works of \citet{wolfson1992,wolfson1993}. 
However, the setting ~$d\rightarrow\infty$, which we are looking at, is generally not studied much in the literature concerning change point analysis 
and the settings ~$n\rightarrow\infty$ ~or ~$n,d\rightarrow\infty$ ~are far more established.

For the classical setting of ~$n\rightarrow\infty$ ~we refer to \citet{horvath1997limit}. 
In the context of panel data especially the setting ~$n,d\rightarrow\infty$ ~is quite popular (cf., e.g., \citet{Bai2010}, \citet{huskova2012} and \citet{kim2014}). 
Nevertheless, the assumption ~$d\rightarrow \infty$ ~and ~$n$ ~fixed is also quite natural (cf., e.g., \citet{Bai2010}, \citet{vert2011a}, \citet{rao2012} and also \citet{pesta2015}). 
It reflects the situation where the amount of panels, i.e. the dimensionality, is much larger than the sample size.

\citet{Bai2010} and \citet{vert2011a} mention important applications in finance, biology and medicine where in particular the framework of common changes is appropriate: 
In finance such changes may occur simultaneously across many stocks e.g. due to a credit crisis or due to tax policy changes. 
In biology and medicine relevant applications are in the study of genomic profiles within classes of patients. 
As mentioned in \citet{vert2011a} the latter example fits particularly well in the ~$n$ ~fixed and ~$d\rightarrow\infty$ ~framework because the length of panels in genomic studies 
is fixed but the amount of panels can be increased by raising the number of patients.

The body of literature related to change point estimation (and detection) is huge. Hence, we do not attempt to summarize it here and refer the reader instead to the reviews in \citet{jandhyalaa2013}, \citet{aue2013}, \citet{frick2012} and \citet{rice2014}. 
Change point analysis in the ~$d\rightarrow\infty$ ~and ~$n$ ~fixed setting goes at least back to the (aforementioned) papers by \citet{vert2010,vert2011a} and by \citet{Bai2010}. 
Therein estimation of common changes is studied independently from different perspectives. 
However, as we will see, the setups of \citet{vert2010,vert2011a} and of \citet{Bai2010} are closely related\footnote{Notice that \citet{vert2011a} is a revised version of \citet{vert2010}. 
Hence, we will mostly refer to the more recent article.}.

\citet{Bai2010} considered a {\it least squares} estimate for independent panels of linear time series under a single change point assumption and  \citet{vert2011a} 
developed a weighted {\it total variation denoising} approach for the multiple change point scenario. 
Furthermore, \citet{vert2011a} proposed a computationally efficient algorithm and implemented it in a convenient 
MATLAB package GFLseg\footnote{Download is available at \url{http://cbio.ensmp.fr/GFLseg} and is licensed under the GNU General Public License.} 
which we also used in some of our simulations.
\\
\\
In this article we study consistency properties, in particular what we define as {\it perfect estimation}\footnote{See Subsection \ref{sec:theoretical} and \eqref{eq:cons_est_def} below.}, 
for the denoising estimate and for the weighted CUSUM (cumulative sums) estimate under weak dependence.  Both types of estimates depend on certain weighting schemes ~$w$. 
Two schemes,  ~$w^\text{simple}$ ~and ~$w^\text{standard}$, were already considered by \citet{vert2011a} for the denoising 
approach in the ~$n$ ~fixed and ~$d\rightarrow\infty$ ~setting (cf. Subsection \ref{sec:common_schemes} for the precise definition). 
They showed that ~$w^\text{standard}$ ~ensures perfect estimation and therefore has better consistency properties for ~$d\rightarrow\infty$ ~than ~$w^\text{simple}$ ~does. 
(Notice that \citet{Bai2010} showed perfect estimation for the least squares estimate, which corresponds to the weighted CUSUM estimate with ~$w^\text{standard}$.)

 We pick up the ideas of \citet{vert2011a} and extend them in various directions which will shed some new light on weighting schemes in general.  
 First, we will emphasize the connection between the total variation denoising approach and the weighted CUSUM estimates. 
 Notice that \citet{vert2011a} assumed independent panels of independent Gaussian observations. 
 We continue by showing that their consistency results hold true under much weaker distributional assumptions, e.g. for panels of non-Gaussian time series with common factors. 
 This is important since many datasets are neither Gaussian nor independent. 
 An implication of our results is that ~$w^{\text{standard}}$ ~generally does not provide consistency for panels of time series and therefore does not ensure perfect estimation under dependence.

 As a solution, we propose a modified weighting scheme ~$w^{\text{exact}}$, which is a generalization of ~$w^{\text{standard}}$, that takes the covariance structure within panels into account. 
 We show that this is the only choice that may generally ensure perfect estimation and derive quite mild conditions under which ~$w^{\text{exact}}$ ~indeed ensures this property. 
 In a detailed simulation study we confirm our results and demonstrate the gain in accuracy of ~$w^{\text{exact}}$. Moreover, we show that our approach outperforms the classical schemes 
 even in random change point settings and for rather moderate dimensions. In practice, the weights ~$w^{\text{exact}}$ ~have to be estimated. 
 Therefore, we discuss feasible approaches and show their applicability in simulations.

Complementary to the study of perfect estimation, we investigate consistent estimation for a further class of weights ~$w^\text{weighted}$, which contains ~$w^{\text{simple}}$ ~and  ~$w^{\text{standard}}$ ~as special cases, 
and characterize changes which are (not) correctly estimated as ~$d\rightarrow\infty$.

\subsection{Basic setup}
We observe ~$d$ ~panels ~$\{Y_{i,k}\}_{i=1,\ldots,n}$ ~for ~$k=1,\ldots,d$ ~in a {\it signal plus noise} model where
\begin{equation}\label{eq:basicmodel_panel}
	Y_{i,k}=m_{i,k} + \big(\varepsilon_{i,k} + \gamma_k\zeta_i\big).
\end{equation}
Here, ~$\{m_{i,k}\}_{i,k\in\mN}$ ~is an array of deterministic signals and ~$\{\varepsilon_{i,k}\}_{i,k\in\mN}$ ~is an array of random centered noises. 
The ~$\{\zeta_i\}_{i\in\mN}$ ~are the so-called {\it common factors} which are assumed to be random, centered and independent of ~$\{\varepsilon_{i,k}\}_{i,k\in\mN}$. 
Their effect on the ~$k$-th panel is quantified via the deterministic {\it factor loadings} ~$\gamma_k\in\mR$.

We assume a (multiple) common change points scenario given by
\begin{equation}\label{eq:basicmodel}
	m_{i,k}=
\begin{cases}
\mu_{1,k}, & i=1,\ldots, u_1,\\
\mu_{2,k}, &  i= u_1+1,\ldots, u_2,\\
\dots, & \dots,\\
\mu_{P+1,k}, & i= u_{P}+1,\ldots, n,
\end{cases}
\end{equation}
where we call ~$u_1,\ldots, u_P\in \mN$ ~change points. The ~$\mu_{j,k}\in\mR$, $j=1,\ldots,P+1$, describe the piecewise constant signals in each panel, 
i.e. the means of the observations. In other words the means jump simultaneously from levels ~$m_{u,k}$ ~to levels ~$m_{u+1,k}$ ~in all panels ~$k=1,\ldots,d$ ~at change points ~$u\in\{u_1,\ldots, u_P\}$. 
However, we do not require ~$m_{u,k}\neq m_{u+1,k}$ ~to hold for all ~$k=1,\ldots,d$, i.e. the changes do not have to occur in all panels. 
Later on we will impose more specific conditions on the average magnitude of changes.

Subsequently, we assume that ~$n\geq 3$ ~since otherwise the model \eqref{eq:basicmodel} is not reasonable because for ~$n=1$ ~the model may not contain any change and for ~$n=2$ ~it holds trivially that ~$P=1$ ~with ~$u_1=1$.

\subsection{Notation}
We follow the compact matrix notation of \citet{vert2011a} and represent the model \eqref{eq:basicmodel} as
\[	
	Y = \cM + E,
\]
with a deterministic matrix ~$\cM$ ~of means with ~$\cM_{i,k}=m_{i,k}$ ~and a random matrix of errors ~$E$ ~with ~$E_{i,k}=\varepsilon_{i,k}+\gamma_k\zeta_i$. Now, let ~$X$ ~be any ~$n\times d$ ~matrix. 
To shorten the notation we write ~$X_{\bullet,j}$ ~for ~$[X_{1,j},\ldots,X_{n,j}]^T$ ~and ~$X_{i,\bullet}$ ~for ~$[X_{i,1},\ldots,X_{i,d}]$. 
For example ~$Y_{\bullet,k}$ ~represents the ~$k$-th panel and a common change at ~$u$ ~corresponds to
\begin{equation}\label{eq:def_delta}
	\Delta =\cM_{u+1,\bullet}-\cM_{u,\bullet}\neq 0. 
\end{equation}
$\|\cdot\|_F$ ~denotes the Frobenius norm and ~$\|\cdot\|_2$ ~stands for the Euclidean norm. We simply write ~$\|\cdot\|$ ~for the former when no confusion is possible, unless it is stated otherwise.

We will consider functions ~$f(i)$ ~with a discrete support ~$i=1,\ldots,n-1$ ~and say that a function ~$f$ ~is convex (or concave) if this holds true 
for the linear interpolation of points ~$f(i)$ ~on the interval ~$[1,n-1]$. Subsequently, we mean by ~$\argmax$ ~the whole set of points at which the maximum is attained.
\\
\\ 
The paper is organized as follows. In Section \ref{sec:segmentation} we discuss segmentation of panel data. In Subsection \ref{sec:totvardenois} we introduce the concept of the denoising segmentation approach 
in general and then we turn to the single change point scenario in Subsection \ref{sec:single_change_point}. 
First, we clarify the selection of a certain regularization parameter and continue to discuss the relation to a class of weighted CUSUM estimates in Subsection \ref{sec:denoising_cusum}. 
Common weighting schemes are presented in Subsection \ref{sec:common_schemes}. In Subsection \ref{sec:theoretical} we analyze the segmentation procedures with respect to different 
weighting schemes and propose a generalization of existing approaches. Subsequently, we discuss estimates of the generalized weighting scheme in Subsection \ref{sec:estimation}. In Section \ref{sec:simulations} we confirm our theoretical results in a simulation study. Finally, we provide a short summary of the paper in Section \ref{sec:conclusion} and all proofs are postponed to Section \ref{sec:proofs}. 

\section{Segmentation of panel data}\label{sec:segmentation}
We start with a description of the denoising approach to change point estimation of \citet{vert2011a}. 
For an overview of the related literature we refer to the references therein.

\subsection{Total variation denoising estimates}\label{sec:totvardenois}
The total variation denoising approach to segmentation is to solve the convex 
minimization problem\footnote{The objective function in \eqref{eq:totvar} is strictly convex, as a sum of convex functions and due to the strict convexity of the mapping ~$U\mapsto \|Y-U\|^2_F$. 
Moreover, we may restrict the minimization to a compact subset. Therefore, a unique solution exists for any ~$\lambda\geq0$.}
\begin{equation}\label{eq:totvar}
\underset{U\in\mR^{n\times d}}{\text{Minimize}} \;\frac{1}{2}\|Y-U\|^2_F + \lambda \times \text{totvar}(U)
\end{equation}
for an appropriate regularization parameter ~$\lambda \geq 0$ ~under a weighted total variation penalty term
\begin{equation}\label{eq:totvar_penalty}
\text{totvar}(U)=\sum_{i=1}^{n-1}\frac{\|U_{i+1,\bullet}-U_{i,\bullet}\|_2}{w(i,n)}
\end{equation}
with positive, {\it position dependent} weights ~$w(i,n)>0$. We denote the solution of \eqref{eq:totvar} by ~$\hat{U}(\lambda)$ ~and each column ~$\hat{U}_{\bullet,k}$ ~represents the best 
piecewise constant fit to the panel ~$Y_{\bullet,k}$ ~with respect to \eqref{eq:totvar}. Each change in those fits, in the sense of ~$\hat{U}_{u+1,\bullet}\neq \hat{U}_{u,\bullet}$, ~is therefore 
assumed to identify a common change across panels at time point ~$u$. Hence, the set ~${\cal E}$ ~of estimated change points is given by
\begin{equation}\label{eq:set_changes}
	{\cal E}(\lambda) = \left\{u \;|\; \hat{U}_{u,\bullet}(\lambda)\neq \hat{U}_{u+1,\bullet}(\lambda)\right\}.
\end{equation}
The penalty term ~$\text{totvar}(U)$ ~is designed in such a way that ~$\hat{U}(\lambda)$ ~has for ~$\lambda>0$ ~a tendency to reduce the cardinality of \eqref{eq:set_changes}, 
i.e. to reduce the amount of identified change points. Hence, ~$\cE$ ~has a tendency to become smaller as ~$\lambda$ ~increases.

Two extreme cases give some insight: For ~$\lambda\uparrow\infty$ ~the penalty term ~$\text{totvar}(U)$ ~dominates the minimization 
and forces the minimizer ~$\hat{U}$ ~to be constant across rows, 
i.e. we obtain ~${\cal E}=\emptyset$ ~and no change points are identified by this procedure at all. 
In contrast to this, if ~$\lambda=0$ ~then ~$\hat{U}(0)=Y$ ~and therefore ~${\cal E}(0)=\left\{u \;|\; Y_{u,\bullet}(\lambda)\neq Y_{u+1,\bullet}(\lambda)\right\}$, 
i.e. the number of estimated changes corresponds to the number of different consecutive rows of ~$Y$. 
Hence, if e.g. all rows are unique then each point ~$i=1,\ldots,n-1$ ~is identified as a change point. 

\subsection{Single change point scenario}\label{sec:single_change_point}
Following \citet{vert2011a} we will at first restrict our considerations to the single change point scenario\footnote{However, as will be shown in the simulations, our findings do have practical implications on the multiple change point scenario as well which is why we stated the general model in \eqref{eq:basicmodel}.}.
\begin{assumption}\label{ass:amoc}
 We consider a single change point scenario with a change at some time point ~$u\in\{1,\ldots,n-1\}$ ~where ~$n\geq 3$.
\end{assumption}
 We need to clarify the selection of ~$\lambda$ ~for \eqref{eq:totvar}. In the single change point setup we aim to select ~$\lambda$ ~as large as possible 
 such that the set ~$\cE$ ~of change points contains only one change point\footnote{For the single change point scenario \citet[in their software GFLseg]{vert2011a} perform a dichotomic search to find the ``first'' ~$\lambda$ ~such that ~$\cE$ ~contains only one element.} in which case ~$\cE=\{\hat{u}\}$ ~and ~$\hat{u}$ ~denotes the {\it denoising estimate} for ~$u$. 
 Heuristically, this forces ~$\hat{u}$ ~to be the most reasonable selection of exactly one common change point according to the penalty term ~$\text{totvar}(U)$.

As will be discussed in Proposition \ref{prop:selection_procedure}, one can identify under mild assumptions 
a random interval such that any ~$\lambda\in (\lambda_{\min},\lambda_{\max})$, with ~$0\leq \lambda_{\min}<\lambda_{\max}$, 
yields the same estimate ~$\hat{u}$ ~and such that any ~$\lambda \in[\lambda_{\max},\infty)$ ~yields ~$\cE=\emptyset$, i.e. no estimate. 
Thus, in the following, we tacitly assume that we select any ~$\lambda\in(\lambda_{\min},\lambda_{\max})$ ~in which case the corresponding estimate ~$\hat{u}$ ~is unambiguous.

 Notice that generally the number of estimated change points does not necessarily decrease monotonously in ~$\lambda$ ~for ~$d>1$ ~(cf. Figure \ref{fig:nonmonotonic} and Section 4 of \cite{vert2011a}) and, 
 additionally, it seems not clear whether any parameter ~$\lambda$ ~that identifies only one change point yields the same estimate.   
 
 \begin{remark}\label{rem:selctionlambda}
 A parameter ~$\lambda$ ~that yields only one change point does not always exist. (E.g., it holds ~$\cE=\emptyset$ ~for any ~$\lambda$ ~if all entries of ~$Y$ ~are equal.) 
We will consider situations where such cases do not occur with probability tending to 1 as ~$d\rightarrow \infty$.  
 \end{remark} 
 
\begin{figure}
 \centering
	\includegraphics[width=0.4\textwidth, trim = 5mm 0mm 5mm 5mm]{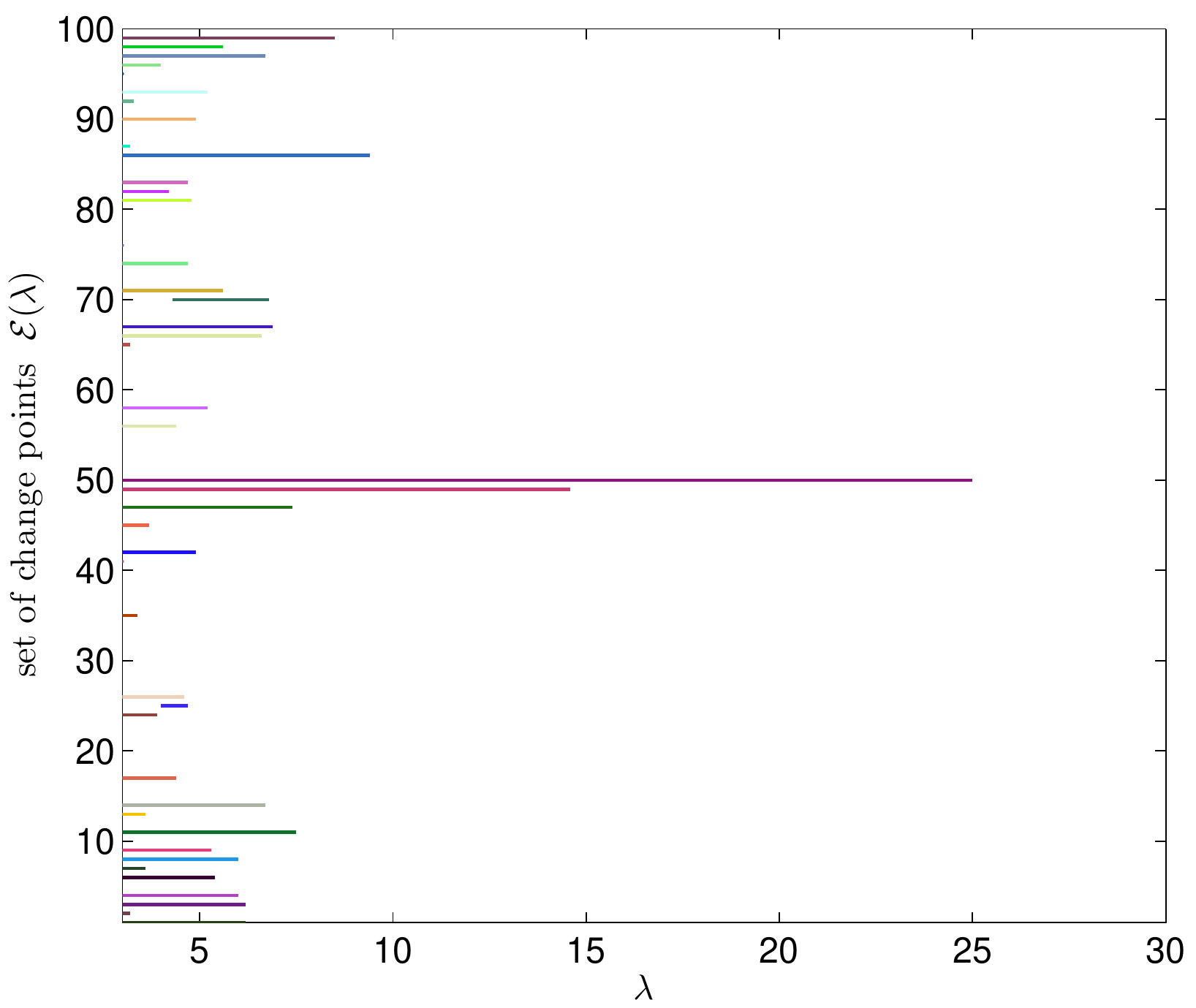}
\caption{The set ~$\cE(\lambda)$, as defined in \eqref{eq:set_changes}, along the ~$\lambda$ ~regularization path for ~$n=100$, $d=50$, independent panels 
with independent standard Gaussian noise ~$\{\varepsilon_{i,k}\}$ ~and without common factors, i.e. with ~$\gamma_k=0$; 
the true change point lies at ~$u=50$ ~with means ~$\mu_{1,k}=0$ ~and ~$\mu_{2,k}=1$, i.e. ~$\Delta_k=1$, ~for all panels ~$k=1,\ldots, d$. Notice that a change point at ~$i=25$ ~is 
only estimated for ~$\lambda$ ~between 4 and 4.7. (The computations are performed using the MATLAB package GFLseg.)}\label{fig:nonmonotonic}
\end{figure}

\begin{remark}
For selection of some reasonable ~$\lambda$ ~in the case of multiple changes, 
in particular if the number of changes is unknown in advance (which is a more realistic scenario), 
we refer to \citet{vert2011a} and to the references therein.
\end{remark}

\subsubsection{Relation to weighted CUSUM}\label{sec:denoising_cusum}
 
In the next proposition we observe, based on Proposition \ref{prop:selection_procedure}, the relation of the estimate ~$\hat{u}$ ~from the denoising approach to a well-known weighted CUSUM estimate\footnote{$\hat{u}_{\star}$ ~is usually defined as the smallest element in ~$\argmax$. Here, we allow ~$\hat{u}_{\star}$ ~to be any element in ~$\argmax$.}
\begin{equation}\label{eq:lasso_cusum}
	\hat{u}_{\star}\in\argmax_{i=1,\ldots,n-1}t(i), \qquad t(i)=w^2(i,n)\Bigg(\sum_{k=1}^d\big|\sum_{j=1}^i(Y_{j,k}-\bar{Y}_{n,k})\big|^2\Bigg).
\end{equation}

\begin{proposition}\label{prop:LASSO_CUSUM}
Under Assumption \ref{ass:amoc} and given that ~$t(i)$ ~has a unique maximum, it holds that ~$\hat{u}\equiv \hat{u}_{\star}$ ~if we use the same weighting ~$w$ ~for the 
denoising and the CUSUM estimates.
\end{proposition}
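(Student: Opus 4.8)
\emph{Proof idea.}
The plan is to read off the estimate $\hat{u}$ directly from the optimality conditions of the convex program \eqref{eq:totvar} and to match them with the CUSUM functional $t(i)$ in \eqref{eq:lasso_cusum}, using the selection interval supplied by Proposition \ref{prop:selection_procedure}. First I would introduce the $(n-1)\times n$ first-difference operator $D$ with $(DU)_{i,\bullet}=U_{i+1,\bullet}-U_{i,\bullet}$, so that $\text{totvar}(U)=\sum_{i=1}^{n-1}\|(DU)_{i,\bullet}\|_2/w(i,n)$ becomes a group-fused-LASSO penalty. Since the objective is convex, $\hat{U}(\lambda)$ is characterised by $Y-\hat{U}(\lambda)=\lambda D^{T}S$ for some $(n-1)\times d$ matrix $S$ with $S_{i,\bullet}=(D\hat{U})_{i,\bullet}/\big(w(i,n)\|(D\hat{U})_{i,\bullet}\|_2\big)$ whenever $(D\hat{U})_{i,\bullet}\neq 0$ and $\|S_{i,\bullet}\|_2\le 1/w(i,n)$ otherwise. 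As $D$ has full row rank with kernel spanned by the constant vector, the linear system $D^{T}S=v$ has, for every $v$ whose columns sum to zero, the unique solution $S_{i,k}=-\sum_{j=1}^{i}v_{j,k}$.

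Next I would analyse the regime where no change is selected. There $\hat{U}_{\bullet,k}$ must be constant, hence equal to $\bar{Y}_{n,k}$ column-wise (it minimises $\|Y-U\|_F^2$ over row-constant matrices), so $v:=Y-\hat{U}$ has centred columns and the formula above yields $S_{i,k}=-\lambda^{-1}\sum_{j=1}^{i}(Y_{j,k}-\bar{Y}_{n,k})$. The remaining constraints $\|S_{i,\bullet}\|_2\le 1/w(i,n)$ then read exactly $t(i)\le\lambda^{2}$ for all $i=1,\dots,n-1$. Hence the row-constant fit is optimal precisely when $\lambda^{2}\ge\max_{i}t(i)$, which identifies $\lambda_{\max}=\sqrt{\max_{i}t(i)}$ (consistent with Proposition \ref{prop:selection_procedure}) and gives $\cE(\lambda)=\emptyset$ there.

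Finally I would show that the first coordinate to enter the regularisation path is $\hat{u}_{\star}$. For $\lambda$ slightly below $\lambda_{\max}$ the row-constant fit is no longer optimal, so $\cE(\lambda)\neq\emptyset$; by continuity of $\lambda\mapsto\hat{U}(\lambda)$ together with the path structure provided by Proposition \ref{prop:selection_procedure}, the only constraint that was tight at $\lambda=\lambda_{\max}$, namely the one attached to the unique maximiser of $t(i)$, is the only one that can become active, so on the whole interval $(\lambda_{\min},\lambda_{\max})$ on which exactly one change is selected one has $\cE(\lambda)=\{\hat{u}_{\star}\}$. Since the selection procedure picks some $\lambda$ in this interval, $\hat{u}\equiv\hat{u}_{\star}$.

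I expect the main obstacle to be this last step: showing rigorously that just below $\lambda_{\max}$ exactly one coordinate activates and that it is the CUSUM maximiser, so that ties, which are excluded by the uniqueness hypothesis on $\max_i t(i)$, are the only possible source of ambiguity. This relies on path-continuity of $\hat{U}(\lambda)$ and on the precise description of $\lambda_{\min}$ and $\lambda_{\max}$ from Proposition \ref{prop:selection_procedure}; the subgradient bookkeeping in the first two paragraphs is then routine.
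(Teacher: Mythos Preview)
Your argument is correct in substance and close in spirit to the paper's, but it re-derives machinery that is already packaged in Proposition~\ref{prop:selection_procedure}. The paper's proof is two lines: after reformulating \eqref{eq:totvar} as the group fused LASSO \eqref{eq:gflasso} with design matrix $\bar D$, one computes
\[
(\bar D^T\bar Y)_{i,k}=-w(i,n)\sum_{j=1}^{i}(Y_{j,k}-\bar Y_{n,k}),
\]
so that $t_i^2=\|\hat c_{i,\bullet}\|^2=t(i)$ in the notation of Proposition~\ref{prop:selection_procedure}; uniqueness of the maximiser gives $t_m<t_M$, and that proposition then yields $\hat u=M=\hat u_\star$ directly.

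Your route differs in that you write the KKT/subgradient system for the original problem \eqref{eq:totvar} via the first-difference operator, rather than passing through the LASSO reparametrisation. This is perfectly legitimate and leads to the same inequalities: your constraint $\|S_{i,\bullet}\|_2\le 1/w(i,n)$ in the inactive regime is exactly $t_i\le\lambda$ in the paper's variables. But your second paragraph is then a re-proof of part~1 of Proposition~\ref{prop:selection_procedure}, and the ``main obstacle'' you flag in the last paragraph---that just below $\lambda_{\max}$ exactly one coordinate activates and it is the CUSUM maximiser---is precisely the content of part~2 of that proposition, established there by exhibiting the explicit one-block solution \eqref{eq:def_beta} and checking the KKT system. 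So you do not need path-continuity of $\lambda\mapsto\hat U(\lambda)$ as an extra ingredient; the explicit solver in Proposition~\ref{prop:selection_procedure} already closes the loop. A minor point: your $D$ (difference operator) clashes with the paper's $D$ (weighted design matrix), so rename it if you keep this presentation.
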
 

Note that this connection holds true only in case of a single change point whereas otherwise 
denoising and CUSUM estimates differ. Therefore, recall that the denoising segmentation approach yields ~$p$ ~distinct change point estimates in case of ~$p$ ~change points.

Proposition \ref{prop:LASSO_CUSUM} allows us to study denoising and CUSUM estimates simultaneously if 
we tacitly exclude the case of non unique maxima of ~$t(i)$ ~from our considerations\footnote{If ~$t(i)$ ~has a non unique maximum, then counterexamples may be constructed 
such that ~$\cE(\lambda)=\{\hat{u}\}$ ~with ~$\hat{u}\in\argmax_{i=1,\ldots,n-1}t(i)$ ~is impossible.}. 
This is not a problem, since we will focus mostly on situations where this case does not occur with probability tending to 1 as ~$d\rightarrow \infty$.

\subsubsection{Common weighting schemes}\label{sec:common_schemes}

\citet{vert2011a} already studied the weightings
\begin{equation}\label{eq:standard_and_simple}
	w^\text{simple}(i,n)= 1,\qquad w^\text{standard}(i,n)=((i/n)(1-i/n))^{-1/2}
\end{equation}
for the denoising estimate ~$\hat{u}$ ~with respect to ~$d\rightarrow\infty$ ~and the latter has also been studied by \citet{Bai2010} for the least-squares estimate. 
Both schemes can be considered as natural and are reasonable for the denoising and for the CUSUM estimate as well. 
The former, ~$w^\text{simple}$, appears to be the first choice from the point of view of the denoising approach. 
In fact, \cite{vert2010} started with this case and studied ~$w^\text{standard}$ ~later in \cite{vert2011a}. 
On the other hand, the latter weighting,  ~$w^\text{standard}$, appears to be the natural choice from the CUSUM point of view, 
because it can be derived via a maximum-likelihood or a least-squares approach. Both weights are special cases of the following parametrized scheme
\begin{equation}\label{eq:admissible_functions}
	w^{\text{weighted}}(i,n)=((i/n)(1-i/n))^{-\gamma},\qquad 0\leq \gamma \leq 1/2. 
\end{equation}
These weights are quite popular in the field of change point analysis. 
Asymptotic properties are well studied for ~$n\rightarrow \infty$ ~for testing with weighted CUSUM 
statistics\footnote{When dealing with CUSUM (under ~$n\rightarrow\infty$ ~asymptotics), the observations ~$\{Y_{j,\bullet}\}$ ~are usually additionally rescaled by the long run covariance matrix.}, via ~$\max_{i=1,\ldots,n-1}t^{1/2}(i)$, or for estimating changes via ~$\argmax_{i=1,\ldots,n-1}t(i)$ (cf., e.g., \citet{horvath1997limit}). 
A smaller ~$\gamma$ ~is usually expected to increase the sensitivity of testing or estimation procedures towards change points in the middle of time series.

In the next subsection we will study estimates under the ~$d\rightarrow\infty$ ~asymptotics with respect to the weights \eqref{eq:standard_and_simple}  and  \eqref{eq:admissible_functions}. 
In particular we will see limitations of \eqref{eq:standard_and_simple} and propose a suitable extension ~$w^\text{exact}$ ~that has better consistency properties. 
 
\subsubsection{Theoretical analysis of weighting schemes}\label{sec:theoretical}

For our analysis we have to impose some (homogeneous) structure on the noise ~$\{\varepsilon_{i,k}\}$ ~and on the common factors ~$\{\zeta_{i}\}$ ~in the next two assumptions. Therefore, let 
\begin{equation}\label{eq:def_tdpartial}
	S_{i,k}(\varepsilon)=n^{-1/2}\sum_{j=1}^i (\varepsilon_{j,k} - \bar{\varepsilon}_{n,k})
\end{equation}
be the cumulated centered noises in the ~$k$-th panel.

\begin{assumption}\label{ass:common_structure} 
\leavevmode
\begin{enumerate} 
\item The noise ~$\{\varepsilon_{i,k}\}_{i,k\in\mN}$ ~is centered with finite fourth moments and the variances fulfill ~$E(\varepsilon_{i,k})^2=\sigma^2$ ~for some ~$0<\sigma^2<\infty$ ~and all ~$i,k$.
\item The function 
\[
	V^2(i)=\Var\left(S_{i,k}(\varepsilon)\right)/\sigma^{2}, \quad i=1,\ldots,n-1,
\]
is independent of ~$k$.
\end{enumerate}
\end{assumption}

\begin{assumption}\label{ass:common_factors}
The common factors ~$\{\zeta_{i}\}$ ~are independent of  ~$\{\varepsilon_{i,k}\}$ ~and are centered with finite fourth moments. Moreover, it holds that, as ~$d\rightarrow\infty$,
\begin{equation}\label{eq:condition_gammas}
	\frac{1}{d}\sum_{k=1}^d\gamma_k^2=o(1).
\end{equation}
\end{assumption}

Before proceeding further with the theory, we show some specific examples for the function ~$V^2(i)$ ~and also discuss some sufficient conditions for part 2 of Assumption \ref{ass:common_structure}. 
Clearly, given that part 1 of Assumption \ref{ass:common_structure} holds true, a sufficient condition is identical distribution of the panels ~$\{\varepsilon_{\bullet,k}\}_{k\in\mN}$. 
The following examples will both play important roles in our subsequent analysis.

 \begin{example}[Uncorrelated noise]\label{example:iid}
   Assume that part 1 of Assumption \ref{ass:common_structure} holds true and that ~$\{\varepsilon_{i,k}\}_{i\in\mN}$ ~are pairwise uncorrelated for any ~$k$. 
   In this situation it holds that
\begin{equation}\label{eq:weigthsstandardiid}
	V^2(i)=(i/n)(1-i/n).
\end{equation} 

\end{example}
 \begin{example}[Moving average noise]\label{example:MA1}
Another interesting case, which satisfies Assumption \ref{ass:common_structure}, is given by ~$\{\varepsilon_{i,k}\}_{i,k\in\mN}$ ~where
\begin{equation}\label{eq:def_ma1}
	\varepsilon_{i,k} = \left(\eta_{i,k}+\phi \eta_{i-1,k}\right)+\theta \left(\eta_{i,k-1}+\phi \eta_{i-1,k-1}\right)
\end{equation}
for ~$i,k\in\mN$, i.e. ~$\{\varepsilon_{i,k}\}_{i,k\in\mN}$ ~are  MA(1) in time and across panels. 
Here, we assume some common parameters ~$\phi,\theta\in\mR$ ~and centered i.i.d. shocks ~$\{\eta_{i,k}\}_{i,k\in\mN}$ ~with finite 
fourth moments and with ~$E(\eta_{i,k}^2)=\tilde{\sigma}^2$, ~$0<\tilde{\sigma}^2<\infty$. In this case \eqref{eq:weigthsstandardiid} extends to
\begin{equation}\label{eq:MA1cov}
	V^2(i)=C\alpha(\phi)\Big[(i/n)(1-i/n)\Big]-2C\phi/n
\end{equation}
with 
\begin{equation}\label{eq:alpha_and_sigma}
	\alpha(\phi)=1+\phi^2+2\phi+2\phi/n,\qquad \sigma^2=\tilde{\sigma}^2(1+\phi^2+\theta^2+\phi^2\theta^2)
\end{equation} 
and with the constant ~$C=(\tilde{\sigma}/\sigma)^2(1+\theta^2)$ ~that is independent of ~$i$. 
\end{example}

The following deterministic {\it critical functions} are the cornerstone of our subsequent analysis:
\begin{equation}\label{eq:def_critical_function}
	C_n(i;u,r) = w^2(i,n) \Big [ V^2(i)r +  H^2(i,u) \Big]
\end{equation} 
for ~$0< r <\infty$ ~with
\[
	H(i,u)=
\begin{cases}
(i/n)(1-u/n),& i=1,\ldots,u,\\
(u/n)(1-i/n),& i=u,\ldots,n-1,
\end{cases}
\] 
for ~$u=1,\ldots,n-1$. As before, ~$u=u_1$ ~is the change point and ~$w(i,n)$ ~are the weights. The parameter
\begin{align}\label{eq:noisechangeratio}
	r&=\bigg(\frac{1}{n}\frac{\sigma^2}{\Delta^2_\infty}\bigg),\\
\intertext{for ~$0< \sigma^2<\infty$ ~and ~$0<\Delta^2_\infty< \infty$, is the normalized {\it noise to change ratio} where}
	\Delta^2_\infty&= \lim_{d\rightarrow \infty}\bigg(\frac{1}{d}\sum_{k=1}^d\Delta_k^2\bigg)
\end{align}
is the average change across all panels at the change point ~$u$. (Recall from \eqref{eq:basicmodel} and \eqref{eq:def_delta} that  ~$\Delta_k=\mu_{2,k}-\mu_{1,k}$.) The case ~$\Delta^2_\infty=0$ ~of a {\it vanishing change} is excluded but will be addressed in the Remark \ref{rem:nochange} below. For simplicity we will write ~$C(i;u,n,r)=C_n(i;u,r)$. 
\\
\\ 
The next theorem generalizes\footnote{Under Gaussianity and independence within panels the results coincide up to a normalizing constant.} \citet[Lemma 1]{vert2011a}. 
Following their approach we show that ~$C(i;u,n,r)$ ~is in probability the limit of rescaled ~$t(i)$ ~for all ~$i=1,\ldots,n-1$ ~(cf. also Proposition \ref{prop:selection_procedure}).

\begin{theorem}\label{thm:convergence_2max} Let Assumptions \ref{ass:amoc}, \ref{ass:common_structure} and \ref{ass:common_factors} be fulfilled. Assume that it holds that, as ~$d\rightarrow\infty$,  
\begin{align}
\frac{1}{d^2}&\sum_{k,q=1}^d \big|\Cov (\varepsilon_{j,k}\varepsilon_{h,k},\varepsilon_{l,q}\varepsilon_{p,q})\big|=o(1),\label{eq:condition2}\\
\frac{1}{d^2}&\sum_{k,q=1}^d\alpha_{k,q}\big|\Cov (\varepsilon_{j,k},\varepsilon_{l,q})\big|=o(1)\label{eq:condition1}
\end{align}
with ~$\alpha_{k,q}=|\Delta_k\Delta_q| +|\gamma_k\gamma_q|$ ~and for all ~$j,h,l,p=1,\ldots,n$. Then, it holds that, as ~$d\rightarrow\infty$,
\begin{equation}\label{eq:perfect_estimation_not_the_definition}
	P\left({\argmax_{i=1,\ldots,n-1}} t(i) \subseteq S\right)\rightarrow 1, \qquad S={\argmax_{i=1,\ldots,n-1}} C(i;u,n,r).
\end{equation}
\end{theorem}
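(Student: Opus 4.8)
The plan is to follow the route announced just before the statement: show that, for a suitable deterministic normalisation $a_d>0$, the rescaled statistic $t(i)/a_d$ converges in probability to $C(i;u,n,r)$ for each fixed $i\in\{1,\ldots,n-1\}$, and then deduce the claimed inclusion of $\argmax$ sets from this pointwise convergence over a \emph{finite} index set. The natural choice is $a_d = n^2 d\,\Delta_\infty^2$, which is legitimate since $0<\Delta_\infty^2<\infty$ and $n$ is fixed.

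First I would decompose the inner CUSUM. Using \eqref{eq:basicmodel} one computes $\sum_{j=1}^i(m_{j,k}-\bar m_{n,k}) = -nH(i,u)\Delta_k$ for every $i=1,\ldots,n-1$ (the two regimes $i\le u$ and $i\ge u$ produce the two branches of $H$), and with $S_{i,k}(\varepsilon)$ as in \eqref{eq:def_tdpartial} and $D_i := \sum_{j=1}^i(\zeta_j-\bar\zeta_n)$ one gets
\[
	\sum_{j=1}^i(Y_{j,k}-\bar Y_{n,k}) = -nH(i,u)\Delta_k + \sqrt{n}\,S_{i,k}(\varepsilon) + \gamma_k D_i .
\]
Squaring, summing over $k=1,\ldots,d$ and dividing by $a_d$ produces six terms. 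The two leading ones are $\tfrac{H^2(i,u)}{\Delta_\infty^2}\cdot\tfrac1d\sum_k\Delta_k^2$, which tends to $H^2(i,u)$ by the definition of $\Delta_\infty^2$, and $\tfrac{1}{n\Delta_\infty^2}\cdot\tfrac1d\sum_k S_{i,k}^2(\varepsilon)$, which I would show tends in probability to $\tfrac{\sigma^2}{n\Delta_\infty^2}V^2(i) = rV^2(i)$ by the choice of $r$ in \eqref{eq:noisechangeratio}. Multiplying by $w^2(i,n)$ then yields exactly $C(i;u,n,r)$, provided the four remaining terms are $o_P(1)$.

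Next I would verify these convergence claims. For $\tfrac1d\sum_k S_{i,k}^2(\varepsilon)$, write $S_{i,k}(\varepsilon)=n^{-1/2}\sum_{j=1}^n c_{i,j}\varepsilon_{j,k}$ with $c_{i,j}=\one\{j\le i\}-i/n$, so that $S_{i,k}^2(\varepsilon)$ is a quadratic form in $\varepsilon_{\bullet,k}$ with bounded coefficients; its expectation equals $\sigma^2V^2(i)$ by Assumption \ref{ass:common_structure}, and by Chebyshev's inequality it suffices to bound $\Var\big(\tfrac1d\sum_k S_{i,k}^2\big) = \tfrac1{d^2}\sum_{k,q}\Cov(S_{i,k}^2,S_{i,q}^2)$. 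Expanding the quadratic forms, this covariance is a combination of only $O(1)$ many (as $n$ is fixed) terms $\Cov(\varepsilon_{j,k}\varepsilon_{h,k},\varepsilon_{l,q}\varepsilon_{p,q})$ with fixed $j,h,l,p$, so condition \eqref{eq:condition2} makes it $o(1)$. The four remainder terms are handled in the same spirit: $\tfrac1d\sum_k\Delta_k S_{i,k}(\varepsilon)$ and $\tfrac1d\sum_k\gamma_k S_{i,k}(\varepsilon)$ are centred with variances $\tfrac1{d^2}\sum_{k,q}\Delta_k\Delta_q\Cov(S_{i,k},S_{i,q})$ and $\tfrac1{d^2}\sum_{k,q}\gamma_k\gamma_q\Cov(S_{i,k},S_{i,q})$, which are $o(1)$ by condition \eqref{eq:condition1} since $|\Delta_k\Delta_q|\le\alpha_{k,q}$, $|\gamma_k\gamma_q|\le\alpha_{k,q}$, and $\Cov(S_{i,k},S_{i,q})$ is an $O(1)$-combination of the $\Cov(\varepsilon_{j,k},\varepsilon_{l,q})$; while the genuinely common-factor terms carry $D_i$, which is $O_P(1)$ because $\{\zeta_i\}$ has finite fourth moments, together with $\tfrac1d\sum_k\gamma_k^2 = o(1)$ (Assumption \ref{ass:common_factors}) and $|\tfrac1d\sum_k\Delta_k\gamma_k| \le \big(\tfrac1d\sum_k\Delta_k^2\big)^{1/2}\big(\tfrac1d\sum_k\gamma_k^2\big)^{1/2}=o(1)$ by Cauchy--Schwarz; the mixed noise/factor term is $O_P(1)\cdot o_P(1)$ by the independence of $\{\zeta_i\}$ and $\{\varepsilon_{i,k}\}$. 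Collecting these through the usual calculus of convergence in probability gives $t(i)/a_d\to C(i;u,n,r)$ in probability for each $i=1,\ldots,n-1$.

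Finally, for the passage to $\argmax$ sets: since $\argmax$ is unchanged by multiplication by the positive constant $a_d$, one works on the finite set $\{1,\ldots,n-1\}$. If $S=\{1,\ldots,n-1\}$ there is nothing to prove; otherwise set $\delta = \min_{i\notin S}\big(\max_j C(j;u,n,r) - C(i;u,n,r)\big)>0$. On the event $\bigcap_{i=1}^{n-1}\{\,|t(i)/a_d - C(i;u,n,r)|<\delta/2\,\}$, whose probability tends to $1$ as $d\to\infty$ by the pointwise convergence and the finiteness of the index set, every $i\notin S$ is strictly dominated by any $i^\ast\in S$, hence $\argmax_i t(i)\subseteq S$, which is \eqref{eq:perfect_estimation_not_the_definition}. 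The bulk of the work — and the only genuinely delicate point — is the middle step: expanding the quadratic and bilinear forms in the noise after the sample-mean centring and checking that only finitely many (fixed $n$) covariance terms occur, so that hypotheses \eqref{eq:condition2} and \eqref{eq:condition1} are precisely what one needs; the $\argmax$ argument and the common-factor contributions are routine.
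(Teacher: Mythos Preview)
Your proposal is correct and follows essentially the same route as the paper: the same decomposition of the centred CUSUM into signal, noise and common-factor parts, the same Chebyshev/variance argument using \eqref{eq:condition2} and \eqref{eq:condition1} to control the quadratic and bilinear noise terms (the paper's terms (a)--(e) correspond to your six pieces), and the same finite-$n$ passage from pointwise convergence to the $\argmax$ inclusion. The only cosmetic differences are that the paper normalises by $n^2 d\,\tilde{\Delta}_d^2$ with $\tilde{\Delta}_d^2=\tfrac1d\sum_k\Delta_k^2$ rather than by $n^2 d\,\Delta_\infty^2$, and that it phrases the last step via the continuous mapping theorem on $\max_{i\notin S}$ instead of your explicit $\delta/2$ argument; both are equivalent.
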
  
 
Theorem \ref{thm:convergence_2max} immediately implies ~$P(\hat{u}_{\star}\in S)\rightarrow 1$. 
Note that, in view of Proposition \ref{prop:LASSO_CUSUM}, the same limiting behaviour holds true for the denoising estimate ~$\hat{u}$ ~if 
the maximum of ~$C(i;u,n,r)$ ~is unique - which will be the interesting case in this article.

\begin{remark}\label{rem:conditionsforexamples} Let Assumptions \ref{ass:amoc}, \ref{ass:common_structure} and \ref{ass:common_factors} hold true.
 Clearly, conditions \eqref{eq:condition2} and \eqref{eq:condition1} are fulfilled if ~$\{\varepsilon_{i,k}\}$ ~are i.i.d. but deviations, 
 in particular within panels, are also possible. For example if panels ~$\{\varepsilon_{\bullet,k}\}$ ~are independent then
 \begin{equation}\label{eq:cond_supvar}
 	\sup_{j,h=1,\ldots,n,\;k\geq 1}\Var(\varepsilon_{j,k}\varepsilon_{h,k})<\infty
\end{equation}
is sufficient. Condition \eqref{eq:cond_supvar} may be reduced further to ~$\Var(\varepsilon_{j,1}\varepsilon_{h,1})<\infty$ ~for all ~$j,h=1,\ldots,n$ ~if we 
additionally assume identical distribution of ~$\{\varepsilon_{\bullet,k}\}$. Notice that it is straightforward to check that ~$\{\varepsilon_{\bullet,k}\}$ ~from 
Example \ref{example:MA1} fulfill conditions \eqref{eq:condition2} and \eqref{eq:condition1}, too.  
\end{remark}
 
\paragraph{Perfect estimation and the exact weighting scheme}\hfill\\\\    
Given any estimate ~$\tilde{u}$ ~for a change point ~$u$, we will speak of {\it perfect estimation}\footnote{Note that perfect estimation was previously defined in \citet{torg2015v2} in terms of the limiting critical function ~$C$.} if consistent estimation, i.e. 
\begin{equation}\label{eq:cons_est_def}
	P(\tilde{u}=u)\rightarrow1,
\end{equation}
as ~$d\rightarrow\infty$,	holds true for all possible change points ~$u=1,\ldots,n-1$ ~and all possible ratios ~$0< r<\infty$. 
 
Theorem \ref{thm:convergence_2max} shows that under mild assumptions the stochastic limits of CUSUM and denoising estimates are described by the deterministic critical function \eqref{eq:def_critical_function}. This reduces the question of consistency and of perfect estimation to an analytical problem: Given \eqref{eq:perfect_estimation_not_the_definition} it is sufficient for consistency if the following Assumption A1 holds true. Clearly, the same applies to perfect estimation if we require the assumption to hold for all possible change points ~$u=1,\ldots,n-1$ ~and all possible ratios ~$0< r<\infty$.  

\begin{assA}\label{ass:A}
The critical function ~$C(i;u,n,r)$ ~has a unique maximum at ~$i=u$ ~given a change point at ~$u$.
\end{assA}
We proceed by studying the existence of weights which ensure perfect estimation for the denoising and the CUSUM estimates. The weights are tacitly assumed not to depend on ~$u$.
  
\begin{theorem}\label{thm:charct_eq} Let Assumption \ref{ass:amoc} be fulfilled and assume that ~$V$ ~is strictly positive. Only the weights
\begin{equation}\label{eq:charct_eq}
	w^{\text{exact}}(i,n)= \alpha/V(i), \qquad \alpha>0,
\end{equation}
for ~$i=1,\ldots,n-1$ ~may fulfill Assumption A1 for all possible change points ~$u=1,\ldots,n-1$ ~and all ratios ~$0< r<\infty$.  
For weights other than \eqref{eq:charct_eq} there is some change point ~$u$ ~and some ratio ~$r>0$ ~such that the maximum of the critical function ~$C(i;u,n,r)$ ~is not at ~$i=u$. 
\end{theorem}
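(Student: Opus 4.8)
The plan is to work directly with the critical function $C(i;u,n,r) = w^2(i,n)\big[V^2(i)r + H^2(i,u)\big]$ and extract, from the requirement that its maximum sit at $i=u$ for \emph{every} $u$ and \emph{every} $r>0$, enough pointwise inequalities to pin down $w$ up to the stated form. The natural first move is to isolate the role of $r$. Fix a candidate change point $u$ and a competitor $i \neq u$. Assumption A1 at this $u$ forces $C(u;u,n,r) \geq C(i;u,n,r)$, i.e.
\begin{equation}\label{eq:prop_ineq}
	w^2(u,n)\big[V^2(u)r + H^2(u,u)\big] \;\geq\; w^2(i,n)\big[V^2(i)r + H^2(i,u)\big],
\end{equation}
and this must hold for all $r>0$. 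Since both sides are affine in $r$ with positive slopes, letting $r \to \infty$ in \eqref{eq:prop_ineq} yields the slope comparison $w^2(u,n)V^2(u) \geq w^2(i,n)V^2(i)$, while letting $r \downarrow 0$ yields the intercept comparison $w^2(u,n)H^2(u,u) \geq w^2(i,n)H^2(i,u)$. The first of these, applied with the roles of $u$ and $i$ interchanged (legitimate because \emph{both} $u$ and $i$ range over all of $\{1,\dots,n-1\}$ and the hypothesis is required for all change points), forces the \emph{equality}
\begin{equation}\label{eq:prop_const}
	w^2(i,n)V^2(i) = \text{const}, \qquad i=1,\dots,n-1,
\end{equation}
which, using $V>0$, is exactly $w(i,n) = \alpha/V(i)$ for some $\alpha>0$. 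That establishes the ``only'' part: no weighting outside the family \eqref{eq:charct_eq} can satisfy A1 for all $u$ and all $r$.

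The remaining claim — that for any $w$ \emph{not} of this form there is an explicit $u$ and $r$ with $\arg\max_i C(i;u,n,r) \neq u$ — follows by contraposition but deserves a direct argument for the ``some $u$, some $r$'' statement. If $w^2(i,n)V^2(i)$ is not constant, pick $i_0, u_0$ with $w^2(i_0,n)V^2(i_0) > w^2(u_0,n)V^2(u_0)$; then for the change point $u_0$ and $r$ large enough the affine-in-$r$ function $C(i_0;u_0,n,r)$ has strictly larger slope than $C(u_0;u_0,n,r)$, so it overtakes it, placing the maximizer away from $u_0$. One should record the (routine) bound on how large $r$ must be, obtained by solving \eqref{eq:prop_ineq} as an equality in $r$; the crossover happens at
\[
	r^* = \frac{w^2(u_0,n)H^2(u_0,u_0) - w^2(i_0,n)H^2(i_0,u_0)}{w^2(i_0,n)V^2(i_0) - w^2(u_0,n)V^2(u_0)},
\]
and any $r > \max(r^*,0)$ works (if the numerator is $\leq 0$ the maximum already fails at $r$ arbitrarily small).

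I expect the main subtlety — not a deep obstacle, but the place where care is needed — to be the handling of the \emph{boundary} competitors and the interplay between the two limits $r\to\infty$ and $r\downarrow 0$. Concretely: the slope-comparison argument gives \eqref{eq:prop_const} cleanly only because every index in $\{1,\dots,n-1\}$ is simultaneously an admissible change point and an admissible competitor, so one genuinely gets the two-sided inequality; one must state this symmetry explicitly rather than let it pass. A secondary point is that $H(u,u) = (u/n)(1-u/n) = V^2(u)$ in the uncorrelated case but \emph{not} in general, so the intercept comparison is not automatically implied by the slope comparison — fortunately we only need the slope comparison to force \eqref{eq:prop_const}, and the intercept comparison is then a free consistency check. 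Finally, one should note that Assumption A1 as stated demands a \emph{unique} maximizer, so in the converse direction it suffices to exhibit a tie-or-worse, i.e. $C(i_0;u_0,n,r) \geq C(u_0;u_0,n,r)$, which the crossover argument delivers with room to spare.
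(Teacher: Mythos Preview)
Your argument is correct and follows essentially the same route as the paper: take $r\to\infty$ in the comparison $C(u;u,n,r)\geq C(i;u,n,r)$ to obtain $w^2(u,n)V^2(u)\geq w^2(i,n)V^2(i)$, then swap the roles of $u$ and $i$ (legitimate since every index is an admissible change point) to force $w^2(i,n)V^2(i)$ constant. Your additional remarks on the intercept comparison at $r\downarrow 0$ and the explicit crossover value $r^*$ are not in the paper's proof but are harmless elaborations, not a different method.
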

Since the estimates are independent of any scaling ~$\alpha>0$, 
we may restrict ourselves to the case of ~$\alpha=1$ ~and consider ~$w^{\text{exact}}$ ~to be unique.  
Notice that in the setting of Example \ref{example:iid} the schemes ~$w^{\text{exact}}$ ~and ~$w^{\text{standard}}$ ~coincide and 
that we already know from \citet[Theorem 3]{vert2011a} that these weights yield perfect estimation for our estimates in the Gaussian i.i.d. setting. 
However, as follows from Example \ref{example:MA1}, we have generally ~$w^{\text{exact}}\neq w^{\text{standard}}$ ~if the noise is dependent in time. 
Hence, due to Theorem \ref{thm:charct_eq}, weights ~$w^{\text{standard}}$ ~cannot generally ensure consistency and perfect estimation in 
such cases\footnote{CUSUM (and denoising) estimates are not consistent if the (unique) maximum of ~$C(i;u,n,r)$ ~is not at ~$i=u$.}. 
Notice that the weightings ~$w^\text{standard}$ ~and ~$w^\text{exact}$ ~might differ fundamentally. The former is strictly convex but the latter is even 
strictly concave in case of \eqref{eq:MA1cov} for ~$\alpha(\phi)<0$.

A consequence of the next theorem, under the assumptions of Theorem \ref{thm:convergence_2max}, is that covariance-based weights $w^{\text{exact}}$ ensure perfect estimation under additional (but again) quite mild assumptions on $V(i)$. \!Therefore, we need to find conditions such that $H(i,u) / V(i)$ has a (unique) maximum at ~$i=u$ ~for any ~$u=1,\ldots,n-1$.

\begin{theorem}\label{thm:cond_v} Let Assumption \ref{ass:amoc} be fulfilled. Assume that we use the weights ~$w^{\text{exact}}$, that ~$V$ ~is strictly positive and that ~$V(i)=V(n-i)$ ~holds true for all ~$i$. Then Assumption A1 is fulfilled for all possible change points ~$u=1,\ldots,n-1$ ~and all ratios ~$0< r<\infty$ ~if and only if
\begin{equation}\label{eq:cond_V_sufficient}
	V(i)/i>V(u)/u,\quad i,u=1,\ldots,n-1,\quad i<u
\end{equation}
holds true. This condition is equivalent to ~$V(i)/i$ ~being strictly decreasing. 
\end{theorem}

Note that the symmetry ~$V(i)=V(n-i)$ ~for all ~$i=1,\ldots,n-1$ ~is implied by the symmetry of covariances
\begin{equation}\label{eq:covstat}
	\Cov((\varepsilon_{1,1},\ldots,\varepsilon_{n,1})^T) = \Cov((\varepsilon_{n,1},\ldots,\varepsilon_{1,1})^T)
\end{equation}
and a sufficient condition for \eqref{eq:covstat} is weak stationarity of ~$\{\varepsilon_{i,1}\}$.  The next lemma provides, based on concavity, a condition for \eqref{eq:cond_V_sufficient} which is sometimes easier to verify than monotonicity of ~$V(i)/i$ ~and that will also be used in Remark \ref{rem:verify_ma1}. It is not clear how to state a comparable condition under convexity.
\begin{lem}\label{lem:concave_func}
Assume that ~$V(i)$ ~is strictly concave\footnote{As a discrete function linearly interpolated on the interval ~$[1,n-1]$.}, strictly positive and that ~$n\geq 3$. Then \eqref{eq:cond_V_sufficient} already holds true if
\begin{equation}\label{eq:criterion_simple}
	V(1)>V(u)/u
\end{equation}
holds true for all ~$u=2,\ldots,n-1$.
\end{lem}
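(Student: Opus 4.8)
The plan is to show that strict concavity of $V$ upgrades the single inequality \eqref{eq:criterion_simple} to the full family of inequalities \eqref{eq:cond_V_sufficient}. The key observation is that \eqref{eq:cond_V_sufficient} asks that the "slope from the origin" map $i\mapsto V(i)/i$ be strictly decreasing on $\{1,\ldots,n-1\}$, and such monotonicity of secant slopes from a fixed point is exactly the kind of statement that concavity is tailored to deliver. First I would fix $i<u$ in $\{1,\ldots,n-1\}$ and write $i$ as a convex combination of the two "anchor" points $1$ and $u$: since $1\le i\le u$ we have $i=\lambda\cdot 1+(1-\lambda)\cdot u$ with $\lambda=(u-i)/(u-1)\in[0,1)$ (the case $i=1$, i.e. $\lambda$ near $1$, and $i=u$, excluded, are the degenerate endpoints). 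Strict concavity of the linear interpolation of $V$ on $[1,n-1]$ then gives
\[
	V(i) > \lambda V(1) + (1-\lambda) V(u)
\]
whenever $\lambda\in(0,1)$, i.e. whenever $1<i<u$; for $i=1$ one has equality $V(i)=\lambda V(1)+(1-\lambda)V(u)$ with $\lambda=1$, which is still fine for the argument below since we then invoke \eqref{eq:criterion_simple} directly.

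Next I would divide the displayed inequality by $i>0$ and substitute the value of $\lambda$, so that the goal $V(i)/i>V(u)/u$ reduces to checking
\[
	\frac{1}{i}\Big(\frac{u-i}{u-1}V(1) + \frac{i-1}{u-1}V(u)\Big) \;\ge\; \frac{V(u)}{u}.
\]
Clearing the positive denominators $i$, $u$ and $u-1$, this is a purely algebraic inequality linear in $V(1)$ and $V(u)$; rearranging, it is equivalent to
\[
	u(u-i)\,V(1) \;\ge\; \big[(u-1)i - u(i-1)\big] V(u) \;=\; (u-i)\,V(u),
\]
using $(u-1)i-u(i-1)=ui-i-ui+u=u-i$. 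Since $i<u$ we have $u-i>0$, so this simplifies to $u\,V(1)\ge V(u)$, i.e. precisely $V(1)\ge V(u)/u$ — which holds by hypothesis \eqref{eq:criterion_simple} (with strict inequality). Tracking where strictness enters: for $1<i<u$ the concavity step is already strict, so $V(i)/i>V(u)/u$ follows; for $i=1$ the reduced inequality $V(1)>V(u)/u$ is literally \eqref{eq:criterion_simple}. Either way we obtain the strict inequality \eqref{eq:cond_V_sufficient}.

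The main obstacle I anticipate is purely bookkeeping rather than conceptual: one must be careful that the endpoint $i=1$ does not fall outside the scope of the strict-concavity inequality (the convex combination becomes trivial there), and that the "discrete concavity via linear interpolation" convention from the Notation section is being used correctly — namely that for integers $1\le i\le u\le n-1$ the interpolated value at $i$ equals $V(i)$ itself, so the three-point concavity inequality among $V(1)$, $V(i)$, $V(u)$ is available with no interpolation error. The hypothesis $n\ge 3$ guarantees the index set $\{1,\ldots,n-1\}$ has at least two points so that the range "$u=2,\ldots,n-1$" in \eqref{eq:criterion_simple} is nonempty and the claim has content. Once these conventions are pinned down, the computation above is a two-line reduction, so I would present it essentially as written, perhaps collapsing the algebra into a single display.
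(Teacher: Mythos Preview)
Your proof is correct and is essentially the same argument as the paper's, just with the algebra written out explicitly. The paper phrases it geometrically: setting $f_u(i)=V(i)/V(u)$ and $g_u(i)=i/u$, one has $f_u(u)=g_u(u)=1$ and, by \eqref{eq:criterion_simple}, $f_u(1)>g_u(1)$; since $f_u$ is strictly concave and $g_u$ linear, $f_u(i)>g_u(i)$ for all $i\in[1,u)$, which is exactly \eqref{eq:cond_V_sufficient}. Your convex-combination computation is precisely the unpacking of this ``concave above chord'' step.
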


\begin{remark}\label{rem:verify_ma1} 
The function ~$V(i)$ ~from Example \ref{example:MA1} fulfills \eqref{eq:cond_V_sufficient} with any moving average parameters ~$\phi,\theta\in\mR$ ~and is always strictly positive for $n\geq 3$.
\end{remark}

Now, a combination of Theorems \ref{thm:convergence_2max} and \ref{thm:cond_v}  together with Remarks \ref{rem:conditionsforexamples}  and \ref{rem:verify_ma1} yield the following corollary.

\begin{corollary} 
Consider the moving average noise from Example \ref{example:MA1} and let Assumptions \ref{ass:amoc} and \ref{ass:common_factors} hold true. Using ~$w^{\text{exact}}$ ~we have perfect estimation for denoising and CUSUM estimates for any moving average parameters ~$\phi,\theta\in\mR$.
\end{corollary}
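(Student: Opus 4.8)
The plan is to obtain the corollary by chaining results that are already in place: Theorem~\ref{thm:convergence_2max} reduces the asymptotics of the weighted CUSUM estimate (and, through Proposition~\ref{prop:LASSO_CUSUM}, of the denoising estimate) to the deterministic critical function $C(i;u,n,r)$, while Theorem~\ref{thm:cond_v} identifies exactly when $w^{\text{exact}}$ makes that function peak uniquely at the true change point. So I would only need to check that the MA(1) array of Example~\ref{example:MA1} satisfies the hypotheses of both theorems and then read off \eqref{eq:cons_est_def}.

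First I would supply the hypotheses of Theorem~\ref{thm:convergence_2max}. Example~\ref{example:MA1} already states that $\{\varepsilon_{i,k}\}$ fulfils Assumption~\ref{ass:common_structure} with $V^2$ given by \eqref{eq:MA1cov}--\eqref{eq:alpha_and_sigma}; together with the standing Assumptions~\ref{ass:amoc} and~\ref{ass:common_factors} this leaves only the covariance conditions \eqref{eq:condition2} and \eqref{eq:condition1} to verify, which is the last sentence of Remark~\ref{rem:conditionsforexamples}. To keep the argument self-contained I would note that in the MA(1) model both $\Cov(\varepsilon_{j,k},\varepsilon_{l,q})$ and $\Cov(\varepsilon_{j,k}\varepsilon_{h,k},\varepsilon_{l,q}\varepsilon_{p,q})$ vanish as soon as $|k-q|\ge2$, so the number of nonzero summands in each double sum is $O(d)$; for \eqref{eq:condition1} one additionally bounds $\alpha_{k,q}=|\Delta_k\Delta_q|+|\gamma_k\gamma_q|\le\tfrac12(\Delta_k^2+\Delta_q^2)+\tfrac12(\gamma_k^2+\gamma_q^2)$ and uses $\tfrac1d\sum_k\Delta_k^2\to\Delta^2_\infty<\infty$ together with $\tfrac1d\sum_k\gamma_k^2=o(1)$ from \eqref{eq:condition_gammas}. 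Both normalized double sums are then $O(1/d)=o(1)$, so Theorem~\ref{thm:convergence_2max} gives $P(\argmax_{i}t(i)\subseteq S)\to1$ with $S=\argmax_{i}C(i;u,n,r)$, for every change point $u$ and every ratio $r\in(0,\infty)$.

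Next I would check the hypotheses of Theorem~\ref{thm:cond_v} with $w=w^{\text{exact}}$. Strict positivity of $V$ for $n\ge3$, which also makes $w^{\text{exact}}$ well defined, and the inequality \eqref{eq:cond_V_sufficient} are precisely the content of Remark~\ref{rem:verify_ma1}. The symmetry $V(i)=V(n-i)$ I would read off directly from \eqref{eq:MA1cov}: the factor $(i/n)(1-i/n)$ is invariant under $i\mapsto n-i$ and $-2C\phi/n$ is a constant, so $V^2(i)=V^2(n-i)$ and hence $V(i)=V(n-i)$ since both sides are positive (alternatively this follows from \eqref{eq:covstat} via weak stationarity of $\{\varepsilon_{i,1}\}_i$). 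Theorem~\ref{thm:cond_v} then yields Assumption~A1: for $w^{\text{exact}}$ the critical function $C(i;u,n,r)$ has a unique maximum at $i=u$ for all $u=1,\ldots,n-1$ and all $0<r<\infty$, that is, $S=\{u\}$.

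Finally I would combine the two. Since $S=\{u\}$, Theorem~\ref{thm:convergence_2max} gives $P(\argmax_{i}t(i)=\{u\})\to1$, so the CUSUM estimate satisfies $P(\hat u_\star=u)\to1$; and since the maximum of $C(i;u,n,r)$ is unique, the remark following Theorem~\ref{thm:convergence_2max} together with Proposition~\ref{prop:LASSO_CUSUM} yields the same limit $P(\hat u=u)\to1$ for the denoising estimate, the exceptional situations of Remark~\ref{rem:selctionlambda} occurring with probability tending to $0$. Letting $u$ range over $1,\ldots,n-1$ and $r$ over $(0,\infty)$ turns this into perfect estimation in the sense of \eqref{eq:cons_est_def} for both estimates and for arbitrary $\phi,\theta\in\mR$. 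I do not expect a genuine obstacle: all of the analysis — the convergence of $t(i)$ and the shape of $C$ — has already been carried out in Theorems~\ref{thm:convergence_2max} and~\ref{thm:cond_v}, so the only genuinely new (and routine) verifications are the decay of the panel covariances behind \eqref{eq:condition2}--\eqref{eq:condition1} and the positivity and symmetry of $V$ obtained directly from \eqref{eq:MA1cov}.
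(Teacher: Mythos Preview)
Your proposal is correct and matches the paper's own justification exactly: the paper states the corollary as an immediate combination of Theorems~\ref{thm:convergence_2max} and~\ref{thm:cond_v} together with Remarks~\ref{rem:conditionsforexamples} and~\ref{rem:verify_ma1}, without providing a separate proof. Your write-up simply fills in the routine verifications (the $O(d)$ covariance sums, the symmetry of $V$ from \eqref{eq:MA1cov}) that the paper leaves implicit.
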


\paragraph{Consistent estimation and the weighted weighting scheme}\hfill\\\\
We turn to the analysis of ~$w^{\text{weighted}}$ ~and, analogously to \citet[Theorem 3]{pesta2015}, our aim is to identify noise to change ratios ~$r$ ~for which consistent estimation \eqref{eq:cons_est_def} does or does not hold true for the denoising and the CUSUM estimates. We restrict the consideration to panels which fulfill all assumptions of Theorem \ref{thm:convergence_2max}  with ~$V^2(i)=(i/n)(1-i/n)$ ~from Example \ref{example:iid} (cf. Remark \ref{rem:conditionsforexamples}). We expect more restrictive conditions on ratios ~$r$ ~for changes ~$u$ ~closer to the edges of the samples, and vice versa less restrictive conditions if ~$u$ ~is more centered. This expectation is confirmed by the next theorem where the minimum is taken over smaller sets in \eqref{eq:minbound} if ~$u$ ~is closer to ~$n/2$.

\begin{theorem}\label{thm:locationmaxiid} Let Assumption \ref{ass:amoc} be fulfilled. Assume ~$V(i)$ ~as in \eqref{eq:weigthsstandardiid} and assume that we use ~$w^{\text{weighted}}$ ~with ~$\gamma\in[0,1/2)$. Define ~$R(u,i)$ ~as in \eqref{eq:ratios} below and set
\begin{equation}\label{eq:minbound}
	\cR:=\min_{n/2\leq i<u^*}R(u^*,i)>0
\end{equation}
with
\[
u^* =
\begin{cases}
	 n-u,  &u=1,\ldots, \lfloor n/2\rfloor, \\
	 u,  &u=\lceil n/2 \rceil,\ldots, n-1\\
\end{cases}
\] 
and where ~$\min \emptyset = \infty$. Then Assumption A1 holds true for ~$0\leq r<\cR$ ~and does not hold true for ~$\cR<r$. In the latter case the maximum of the critical function ~$C(i;u,n,r)$ ~is not at ~$i=u$. 
\end{theorem}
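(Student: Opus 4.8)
\emph{Proof sketch.} The statement is purely analytic: it only asks whether the deterministic critical function $C(\cdot\,;u,n,r)$ of \eqref{eq:def_critical_function} attains its maximum uniquely at $i=u$, so nothing probabilistic beyond what is already available is needed. The plan proceeds in three steps. First (symmetry reduction), I would record the identity $C(i;u,n,r)=C(n-i;n-u,n,r)$, which follows immediately from $w^{\text{weighted}}(i,n)=w^{\text{weighted}}(n-i,n)$, from $V^2(i)=(i/n)(1-i/n)=V^2(n-i)$, and from $H(i,u)=H(n-i,n-u)$. Hence Assumption A1 holds for the change point $u$ if and only if it holds for $n-u$, and the ``maximum not at $i=u$'' part of the conclusion transports under $i\mapsto n-i$ as well; it therefore suffices to prove the theorem with $u$ replaced by $u^*$, which by its definition satisfies $u^*/n\ge 1/2$.

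Second, I would exploit the affine dependence on $r$. Fixing $u^*$ and writing $x=i/n$, the choice $V^2(i)=(i/n)(1-i/n)$ gives $w^2(i,n)V^2(i)=(x(1-x))^{1-2\gamma}=:A(i)$, so that
\[
	C(i;u^*,n,r)=A(i)\,r+B(i),\qquad B(i):=w^2(i,n)H^2(i,u^*),
\]
is affine in $r$ for each $i$. Two elementary facts drive the comparison with $i=u^*$: (a) since $t\mapsto t^{1-2\gamma}$ is strictly increasing ($0\le\gamma<1/2$) and $x\mapsto x(1-x)$ is strictly unimodal with peak at $x=1/2$, and since $u^*/n\ge1/2$, one has $A(i)>A(u^*)$ exactly for $n-u^*<i<u^*$, while $A(n-u^*)=A(u^*)$ and $A(i)<A(u^*)$ for all remaining $i\ne u^*$; (b) a short computation, splitting $i<u^*$ and $i>u^*$ --- in each case $B(u^*)/B(i)$ factors as a product of powers of $u^*/i$ (resp.\ $i/u^*$) and of $(1-i/n)/(1-u^*/n)$ (resp.\ its reciprocal), all $\ge 1$ and at least one $>1$ --- shows $B(i)<B(u^*)$ for every $i\ne u^*$, which is the $r=0$ case. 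Combining (a) and (b): for each $i\ne u^*$ the lines $r\mapsto C(i;u^*,n,r)$ and $r\mapsto C(u^*;u^*,n,r)$ cross at the value $R(u^*,i):=(B(u^*)-B(i))/(A(i)-A(u^*))$ of \eqref{eq:ratios} whenever $A(i)\ne A(u^*)$, and this crossing value is positive precisely for $n-u^*<i<u^*$, while for every other $i$ one has $C(i;u^*,n,r)<C(u^*;u^*,n,r)$ for all $r\ge0$. Thus A1 holds if and only if $r<R(u^*,i)$ for all $i$ with $n-u^*<i<u^*$, i.e.\ if and only if $r<\min_{n-u^*<i<u^*}R(u^*,i)$, and once $r$ exceeds this value the minimizing index overtakes $u^*$.

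Third, I would show this minimum is already attained over $\{i:n/2\le i<u^*\}$, the set in \eqref{eq:minbound}. For an index $i$ with $n-u^*<i<n/2$, its reflection $n-i$ lies in $(n/2,u^*)$, and since $A(n-i)=A(i)$ while
\[
	B(n-i)-B(i)=(1-u^*/n)^2\,(x(1-x))^{-2\gamma}\,(1-2x)>0\qquad(x=i/n<\tfrac12),
\]
we get $C(i;u^*,n,r)<C(n-i;u^*,n,r)$ for all $r\ge0$; hence the constraint imposed by $i$ is implied by the one imposed by $n-i$. Therefore $\min_{n-u^*<i<u^*}R(u^*,i)=\min_{n/2\le i<u^*}R(u^*,i)=\cR$, with $\min\emptyset=\infty$, which occurs exactly when $u^*=\lceil n/2\rceil$ (then $(n-u^*,u^*)$ contains no integer and A1 holds for every $r$). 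In particular $\cR>0$; A1 holds for $0\le r<\cR$; and for $r>\cR$ the index $i^*$ attaining $\cR=R(u^*,i^*)$ has $C(i^*;u^*,n,r)>C(u^*;u^*,n,r)$, so the maximum of $C(\cdot;u^*,n,r)$ --- equivalently, after undoing the symmetry, of $C(\cdot;u,n,r)$ --- is not at $u$. The only genuine work is the two sign estimates in step (b) and the reflection inequality; the essential, if mild, point is to conduct the case analysis relative to $u^*/n\ge1/2$, so that the competitor window $(n-u^*,u^*)$ is symmetric about $n/2$ and the reflection closes it down to $\{i:n/2\le i<u^*\}$. The remaining bookkeeping (parity of $n$, empty minima) is routine.
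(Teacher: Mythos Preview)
Your proposal is correct and follows essentially the same route as the paper's proof: both reduce by the symmetry $i\mapsto n-i$ to $u^*\ge n/2$, exploit the affine structure $C(i;u^*,n,r)=A(i)r+B(i)$ (the paper writes $F,G$ for your $A,B$), analyse the signs of $A(i)-A(u^*)$ and $B(i)-B(u^*)$ to see that only indices in $(n-u^*,u^*)$ can ever overtake $u^*$, and then use the reflection $i\mapsto n-i$ (together with $A(n-i)=A(i)$ and $B(n-i)>B(i)$ for $i<n/2$) to collapse the competitor window to $\{n/2\le i<u^*\}$. The only cosmetic difference is that you dispose of the region $i>u^*$ inside the same case analysis (via $A(i)<A(u^*)$ and $B(i)<B(u^*)$), whereas the paper handles it in a separate closing remark that $C(x;u,n,r)$ is strictly decreasing for $x>u$.
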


The bound in \eqref{eq:minbound} can be evaluated numerically but to gain more insight into the influence and the interaction of parameters, it is desirable to get explicit representations and approximations of this expression. We will provide such approximations where we first let ~$d\rightarrow\infty$ ~and then consider ~$n\rightarrow\infty$. 
Therefore, we have to introduce a {\it boundary function}
\begin{equation}\label{eq:boundary}
	B(\gamma) = \begin{cases}\frac{4\gamma^2 +6\gamma^{3/2}-3\gamma^{1/2}-1}{8\gamma^2 + 8\gamma^{3/2} -4\gamma^{1/2}-2\gamma-1},&\gamma\in[0,1/2),\\
2^{-1/2},&\gamma=1/2.
\end{cases}
\end{equation}
This function ~$B(\gamma)$ ~is monotonously decreasing, continuous and ~$B(\gamma)\geq 2^{-1/2}$ ~holds true which can be seen as follows. It holds that ~$\partial_\gamma (B(\gamma^2))=-(1+2\gamma)^{-2}$ ~which in turn implies that ~$\partial_\gamma B(\gamma)<0$ ~holds true for any ~$\gamma\in(0,1/2)$. Applying l'H\^{o}pital's rule to ~$B(\gamma^2)$ ~we get the continuity at ~$\gamma=1/2$.

\begin{theorem}\label{thm:representmaxiid}
Let the assumptions of Theorem \ref{thm:locationmaxiid} hold true, set ~$B(\gamma)$ ~as in \eqref{eq:boundary}, set ~$s=u^*/n$ ~and let ~$\cR=\cR(s,\gamma)$ ~be as in \eqref{eq:minbound}.  
\begin{enumerate}
\item If ~$1/2 +1/n<s\leq B(\gamma)$, then ~$\cR(s,\gamma)$ ~equals to the unique solution of 
\begin{equation}\label{eq:slopecriteria}
	C(u^*-1;u^*,n,r)=C(u^*;u^*,n,r)
\end{equation}
and for ~$u^*=\lfloor n\zeta \rfloor$ ~with any ~$1/2<\zeta\leq B(\gamma)$ ~it holds that
\begin{equation}\label{eq:asymptotic_expression_gam}
	\lim_{n\rightarrow\infty}\cR(s,\gamma)=\zeta(a-\zeta)(b-\zeta)(\zeta-c)^{-1}
\end{equation}
with ~$a=1$, $b=(\gamma-1)/(2\gamma-1)$, $c=1/2$. 
\item If  ~$B(\gamma)+2/n<s< 1$, then the unique solution to \eqref{eq:slopecriteria} is larger than ~$\cR(s,\gamma)$.
\end{enumerate}
\end{theorem}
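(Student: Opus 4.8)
The plan is to reduce both parts to the monotonicity of one explicit ratio. Write $x_i=i/n$, $s=u^*/n$, $p(x)=x(1-x)$, and set $\varphi(x)=p(x)^{1-2\gamma}$, $\psi(x)=x^2p(x)^{-2\gamma}$. Since $H(i,u^*)=(i/n)(1-u^*/n)$ for $i\le u^*$, we have $C(i;u^*,n,r)=\varphi(x_i)\,r+(1-s)^2\psi(x_i)$ and $C(u^*;u^*,n,r)=\varphi(s)\,r+(1-s)^2\psi(s)$. Because $\varphi$ is strictly decreasing and $\psi$ strictly increasing on $[1/2,1)$, for each integer $i$ with $n/2\le i<u^*$ the (affine in $r$) equation $C(i;u^*,n,r)=C(u^*;u^*,n,r)$ has the unique positive root $R(u^*,i)=(1-s)^2\,G(x_i)$, where
\[
   G(x):=\frac{\psi(s)-\psi(x)}{\varphi(x)-\varphi(s)},\qquad \tfrac12\le x<s .
\]
By Theorem~\ref{thm:locationmaxiid}, $\cR=(1-s)^2\min_{n/2\le i<u^*}G(x_i)$, while the unique solution of \eqref{eq:slopecriteria} is precisely $R(u^*,u^*-1)=(1-s)^2G(x_{u^*-1})$ (the $r$-slope $\varphi(x_{u^*-1})-\varphi(s)$ is strictly positive once $s>1/2+1/n$). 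Thus both assertions come down to whether $G$ attains its grid-minimum at the right endpoint $x_{u^*-1}$.

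The key input is the sign of $G'$. Using $\varphi(x)-\varphi(s)=-\int_x^s\varphi'$ and $\psi(s)-\psi(x)=\int_x^s\psi'$ and factoring, the numerator of $G'$ equals
\[
   N(x)=\int_x^s\bigl(\psi'(x)\varphi'(t)-\varphi'(x)\psi'(t)\bigr)\,dt=\int_x^s\varphi'(x)\varphi'(t)\,\bigl(h(x)-h(t)\bigr)\,dt,\qquad h:=\psi'/\varphi' .
\]
Since $\varphi'<0$ on $(1/2,1)$ the weight $\varphi'(x)\varphi'(t)$ is positive, so the sign of $N$ is governed entirely by the monotonicity of $h$ on $(1/2,s)$. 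A direct computation shows that $h'(x)$ has the sign of $\varphi'(x)\psi''(x)-\varphi''(x)\psi'(x)=2(1-2\gamma)p(x)^{-4\gamma-2}x^2\bigl[(1-4\gamma)x^2-(2-4\gamma)x+(1-\gamma)\bigr]$; the bracketed quadratic is $1/4$ at $x=1/2$ and $-\gamma$ at $x=1$, so it has a unique root in $(1/2,1)$, and one verifies this root is the function $B(\gamma)$ of \eqref{eq:boundary} (for instance by checking $B(\gamma^2)=(1+\gamma)/(1+2\gamma)$, consistent with the stated identity $\partial_\gamma B(\gamma^2)=-(1+2\gamma)^{-2}$). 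Hence $h$ is strictly increasing on $[1/2,B(\gamma)]$ and strictly decreasing on $[B(\gamma),1]$.

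If $1/2+1/n<s\le B(\gamma)$, then $h$ is strictly increasing on $(1/2,s)$, so $h(t)>h(x)$ whenever $x<t<s$, whence $N<0$ on $(1/2,s)$ and $G$ is strictly decreasing on $[1/2,s)$; consequently its grid-minimum is at $x_{u^*-1}$, and $\cR$ equals the solution of \eqref{eq:slopecriteria}. For the asymptotics, with $u^*=\lfloor n\zeta\rfloor$ we have $x_{u^*-1}=s-1/n$, so
\[
   \cR(s,\gamma)=(1-s)^2\,\frac{\psi(s)-\psi(s-1/n)}{\varphi(s-1/n)-\varphi(s)}\ \xrightarrow[n\to\infty]{}\ (1-\zeta)^2\,\frac{\psi'(\zeta)}{-\varphi'(\zeta)},
\]
and substituting $\varphi'(\zeta)=(1-2\gamma)p(\zeta)^{-2\gamma}(1-2\zeta)$ and $\psi'(\zeta)=2\zeta^2p(\zeta)^{-2\gamma-1}\bigl[(1-\gamma)-\zeta(1-2\gamma)\bigr]$ and simplifying gives $\zeta(a-\zeta)(b-\zeta)(\zeta-c)^{-1}$ with $a=1$, $b=(\gamma-1)/(2\gamma-1)$, $c=1/2$, which is \eqref{eq:asymptotic_expression_gam}. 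If instead $B(\gamma)+2/n<s<1$, then $(B(\gamma),s)\neq\emptyset$ and $h$ is strictly decreasing on it, so for $x\in(B(\gamma),s)$ and $t\in(x,s)$ we get $h(t)<h(x)$, hence $N>0$ and $G$ is strictly increasing on $[B(\gamma),s)$. The $2/n$ slack forces $x_{u^*-2}=s-2/n>B(\gamma)$, so $G(x_{u^*-2})<G(x_{u^*-1})$, and therefore $\cR\le(1-s)^2G(x_{u^*-2})<(1-s)^2G(x_{u^*-1})$, the solution of \eqref{eq:slopecriteria}.

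I expect the main obstacle to be the computation pinning down $B(\gamma)$: one must work out $h'=(\psi'/\varphi')'$, recognise the resulting quadratic, determine its sign pattern on $[1/2,1]$, and match its relevant root with the rationalised form \eqref{eq:boundary}. Everything else is essentially routine once the monotonicity of $h$ is in hand; in particular Part~2 needs only that $G$ increases on $(B(\gamma),s)$, so no global shape analysis of $G$ is required, and the margins $1/n$ and $2/n$ are exactly what is needed to translate the continuous behaviour of $G$ onto the integer grid.
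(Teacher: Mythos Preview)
Your proof is correct and takes a genuinely different route from the paper.

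The paper works with the continuous extension $C(x,r)$ and studies its \emph{critical points} as $r$ varies: it rescales $\partial_x C(x,r)$ to a quadratic $P(x,r)$ in $x$, computes the discriminant $D(r)$ (itself quadratic in $r$) with roots $r_{1,2}$, and shows that at $r=r_2$ the unique root of $P(\cdot,r_2)$ is $x^\ast=nB(\gamma)$, a saddle of $C(\cdot,r_2)$. It then argues that for $r<r_2$ the function has no interior extrema on $(0,n)$ while for $r>r_2$ a maximum--minimum pair emerges near $nB(\gamma)$ and the maximum drifts toward $n/2$ as $r\uparrow\infty$; this dynamic picture is what justifies that, when $u^\ast\le nB(\gamma)$, the grid minimum of $R(u^\ast,\cdot)$ sits at $u^\ast-1$.

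You bypass all of this by analysing the threshold ratio $G(x)=R(u^\ast,x)/(1-s)^2$ directly: writing $G'$ as an integral with weight $\varphi'(x)\varphi'(t)>0$ and kernel $h(x)-h(t)$ with $h=\psi'/\varphi'$, the monotonicity of $G$ on $[1/2,s)$ is reduced to the monotonicity of $h$, and $B(\gamma)$ falls out as the unique root in $(1/2,1)$ of the quadratic governing the sign of $h'$. This is shorter, avoids the discriminant and saddle-point tracking, and makes the role of $B(\gamma)$ completely transparent (it is literally where $h$ turns). It also yields Part~2 for free, whereas the paper's discriminant argument really only delivers Part~1 and Part~2 is then read off from the same extremum dynamics. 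One small cosmetic point: $h$ is undefined at $x=1/2$ since $\varphi'(1/2)=0$; your integral representation of $N$ still gives $N(1/2)=\psi'(1/2)\bigl(\varphi(s)-\varphi(1/2)\bigr)<0$ directly, so the conclusion $G'<0$ extends to the left endpoint of the grid when $n$ is even.

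The paper's approach does buy something: its global description of how the local maximum $x_{\max}(r)$ and minimum $x_{\min}(r)$ of $C(\cdot,r)$ move with $r$ is reused in the proof of Proposition~\ref{prop:gamma025}, where for $\zeta>B(\gamma)$ one must locate the interior minimiser of $R(u^\ast,\cdot)$ explicitly. Your monotonicity argument for $G$ gives increase on $(B(\gamma),s)$ and decrease on $(1/2,B(\gamma))$, but not immediately the precise interior minimiser when $s>B(\gamma)$; for that further work (as in the paper for $\gamma=1/4$) is needed either way.
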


In \eqref{eq:asymptotic_expression_gam} the quantity ~$\cR(s,\gamma)$, i.e. the range for ~$r$ ~that ensures consistency, becomes larger for parameters ~$\gamma$ ~and ~$\zeta$ ~close to ~$1/2$, which confirms our intuition.  Note that Theorem \ref{thm:representmaxiid} is consistent with \citet[Theorem 2]{vert2011a} since, as ~$n\rightarrow\infty$, it holds
\[
	\cR(s,0)=\zeta(1-\zeta)^2(\zeta-1/2)^{-1}+o(1)
\] and ~$B(0)=1$. 
\\
\\
The weighting with ~$\gamma=1/4$ ~can be seen as a compromise between ~$w^{\text{standard}}$ ~and ~$w^{\text{simple}}$. For this particular  ~$\gamma$ ~we are able to compute ~$\lim_{n\rightarrow\infty}\cR(s,1/4)$ ~for any ~$\zeta\in(1/2,1)$. It would be interesting to know if such a formula could be computed for the remaining parameters  ~$\gamma\in(0,1/4)\cup(1/4,1/2)$ ~as well.

\begin{proposition}\label{prop:gamma025}
Let the assumptions of Theorem \ref{thm:representmaxiid} hold true, set ~$\gamma=1/4$ ~and ~$B(\gamma)$ ~as in \eqref{eq:boundary}, i.e. ~$B(1/4)=3/4$. Then it holds for ~$\cR=\cR(s,1/4)$ ~that
\begin{equation}\label{eq:limitfunc}
\lim_{n\rightarrow \infty} \cR(s,1/4) =
\begin{cases}
\frac{\zeta(1-\zeta)(3/2-\zeta)}{(\zeta-1/2)},& \zeta\in(1/2,3/4],\\
\frac{(\zeta-1)^2(\zeta(\zeta+1)+(\zeta/2-1)(1-\zeta)^{1/2}-1)}{\zeta(\zeta-1)+(\zeta/2)(1-\zeta)^{1/2}},&\zeta\in(3/4,1)
\end{cases}
\end{equation}
This function is continuously differentiable for ~$\zeta\in(1/2,1)$. The second derivative does however not exist for ~$\zeta=3/4$.
\end{proposition}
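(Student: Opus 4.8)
The plan is to split according to the value of $\zeta=\lim_n s$, mirroring Theorem \ref{thm:representmaxiid}. For $\zeta\in(1/2,3/4]$ one notes that $B(1/4)=3/4$ (from \eqref{eq:boundary}), so for large $n$ the hypothesis $1/2+1/n<s\le B(1/4)$ of Theorem \ref{thm:representmaxiid}(1) holds; the limit is then read off from \eqref{eq:asymptotic_expression_gam} with $a=1$, $c=1/2$ and $b=(\gamma-1)/(2\gamma-1)=3/2$, which is precisely $\zeta(1-\zeta)(3/2-\zeta)/(\zeta-1/2)$, the first branch of \eqref{eq:limitfunc}. No further work is needed in this range.

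For $\zeta\in(3/4,1)$ we are in the situation of Theorem \ref{thm:representmaxiid}(2), which only asserts that $\cR(s,1/4)$ is strictly below the boundary value $R(u^*,u^*-1)$; the actual value must be found by minimising $R(u^*,\cdot)$ over the interior of $\{n/2,\dots,u^*-1\}$, where $R(u^*,i)$ from \eqref{eq:ratios} is the value of $r$ at which the critical function values at $i$ and at $u^*$ coincide. I would (i) write $R(u^*,i)$ for $\gamma=1/4$, where, because $2\gamma=1-2\gamma=1/2$, all powers of $(i/n)(1-i/n)$ appearing in $w^{\text{weighted}}$ and in \eqref{eq:def_critical_function} collapse to square roots; (ii) let $i,u^*\to\infty$ with $i/n\to t$ and $u^*/n\to\zeta$, obtaining a continuous $\rho(\zeta,t)$ on $t\in[1/2,\zeta]$ (uniform convergence on $t\in[1/2,\zeta-\delta]$ for each $\delta>0$, the endpoint $t=\zeta$ being covered by the case-1 computation, so that $\rho(\zeta,\zeta)$ is again the case-1 expression $\zeta(1-\zeta)(3/2-\zeta)/(\zeta-1/2)$); and (iii) minimise $\rho(\zeta,\cdot)$ over $[1/2,\zeta]$. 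For $\zeta>3/4$ the minimiser is interior, hence a root $t^*(\zeta)$ of $\partial_t\rho(\zeta,t)=0$; with $\gamma=1/4$ this equation becomes polynomial in $t$ after clearing the square roots and can be solved in closed form, the solution carrying a factor $(1-\zeta)^{1/2}$. Substituting $t=t^*(\zeta)$ back into $\rho$ and simplifying produces the second branch of \eqref{eq:limitfunc}; along the way one checks $t^*(\zeta)\in(1/2,\zeta)$ exactly for $\zeta\in(3/4,1)$ with $t^*(3/4)=3/4$, which both justifies the case split and feeds into the smoothness claim.

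For the regularity assertions: each branch is smooth on its open interval (the denominator of the second branch is $\zeta(1-\zeta)\bigl(\tfrac12(1-\zeta)^{-1/2}-1\bigr)$, zero only at $\zeta=3/4$); at $\zeta=3/4$ the second branch is of the form $0/0$ and its limit, via l'H\^{o}pital or a direct cancellation, equals $9/16$, matching the first branch, so $g(\zeta):=\lim_n\cR(s,1/4)$ is continuous on $(1/2,1)$. Differentiability at $\zeta=3/4$ follows by an envelope argument: $g=\rho(\zeta,\zeta)$ on $(1/2,3/4]$ and $g=\rho(\zeta,t^*(\zeta))$ on $[3/4,1)$; on the latter $\partial_t\rho(\zeta,t^*)=0$ so $g'$ reduces to $\partial_\zeta\rho$ at $(\zeta,t^*)$, while on the former the extra chain-rule term $\partial_t\rho(\zeta,\zeta)$ vanishes at $\zeta=3/4$ because this quantity changes sign there (the content of $B(1/4)=3/4$), so the one-sided derivatives agree. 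The second derivative, on the right piece, picks up $(t^*)'$-dependent contributions that are absent on the left, and computing $g''(3/4^\pm)$ from the two representations shows they differ, so $g''(3/4)$ does not exist. The step I expect to be genuinely heavy is (iii): even with $\gamma=1/4$, solving $\partial_t\rho=0$ and re-substituting to the stated rational-plus-$\sqrt{1-\zeta}$ form is a long computation, which is presumably why only $\gamma=1/4$ is treated; a secondary nuisance is making the $\lim_n/\min_i$ interchange rigorous near $t=\zeta$, where the $1/n$ spacing of the indices matters and Theorem \ref{thm:representmaxiid}(2) is needed to rule out the boundary minimiser.
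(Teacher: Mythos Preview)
Your proposal is correct and follows the same strategy as the paper: the first branch is read off from Theorem~\ref{thm:representmaxiid}(1), and the second is obtained by locating the interior minimiser of $R(u,\cdot)$ explicitly---which is only tractable because $\gamma=1/4$---and substituting back. The one difference is the order of operations. The paper solves $\partial_x R(u,x)=0$ at finite $n$, obtaining the closed-form minimiser $x_{\min}=n/2+(n^2-un)^{1/2}/2$, and only then lets $n\to\infty$, handling the fact that $x_{\min}\notin\mN$ by a mean-value argument over $\delta\in[-1,1]$; you instead pass to the continuum limit $\rho(\zeta,t)$ first and minimise afterwards. The paper's ordering sidesteps the $\lim_n/\min_i$ interchange you correctly flag as a nuisance, reducing it to the simpler observation that $R(sn,\lfloor x_{\min}\rfloor)$ and $R(sn,\lceil x_{\min}\rceil)$ have the same limit. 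Your envelope argument for $C^1$-matching at $\zeta=3/4$ is more explicit than the paper's, which simply invokes l'H\^{o}pital.
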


\paragraph{Spurious estimation for vanishing change points}\hfill\\\\ 
We close this subsection by a short remark on spurious estimation for ~$\Delta_\infty=0$, i.e. a probably common change that vanishes asymptotically. We also include the case ~$\Delta_k=0$ ~for ~$k\geq 1$. 
\begin{remark}\label{rem:nochange}
Let all assumptions of Theorem \ref{thm:convergence_2max} hold true but with ~$\Delta_\infty=0$.  Following the proof of Theorem \ref{thm:convergence_2max} it is clear that \eqref{eq:perfect_estimation_not_the_definition} also holds true in this situation with ~$C(i;u,n,r) := 
w^2(i,n) V^2(i)$. In case of Example \ref{example:iid} and for ~$w^{\text{weighted}}$ ~this yields
\[
	\argmax_{i=1,\ldots,n-1} C(i;u,n,r)= 
	\begin{cases} 
	\{\lfloor n/2 \rfloor,\lceil n/2 \rceil\},& \gamma\in[0,1/2),\\
   \{1,\ldots,n-1\}, & \gamma=1/2,
	\end{cases} 
\]
i.e. estimation of spurious changes. In case of Example \ref{example:MA1} and using ~$w^{\text{exact}}$ ~it also always holds that ~$\argmax_{i=1,\ldots,n-1} C(i;u,n,r)=\{1,\ldots,n-1\}$ ~since ~$C(i;u,n,r)$ ~is constant in ~$i$.
\end{remark} 
\subsubsection{Estimation of the exact weighting scheme}\label{sec:estimation} 
In this subsection we discuss estimates of ~$w^{\text{exact}}(i,n)$, or equivalently of ~$V(i)$, under the assumptions that ~$V$ ~is strictly positive, that the sequences ~$\{\varepsilon_{j,k}\}_{k\in\mN}$ ~and ~$\{\varepsilon_{j,k}\varepsilon_{l,k}\}_{k\in\mN}$ ~fulfill the weak law of large numbers for any ~$1\leq j,l\leq n$ ~and that Assumptions \ref{ass:amoc}, \ref{ass:common_structure} and \ref{ass:common_factors} hold true. Notice that all conditions on the noise ~$\{\varepsilon_{j,k}\}$ ~are clearly fulfilled for our moving average Example \ref{example:MA1}.
\\
\\
The partial sums \eqref{eq:def_tdpartial} can be rewritten as
\begin{equation}\label{eq:partial_sum_rewritten}
	S_{i,k}(\varepsilon) = \sum_{j=1}^n a_{i,j}\varepsilon_{j,k}
\end{equation} 
with
\begin{equation}\label{eq:def_aij}
a_{i,j} =\frac{1}{n^{1/2}}
	\begin{cases}
1-i/n,& j=1,\ldots,i,\\
-i/n,& j=i+1,\ldots,n
	\end{cases}
\end{equation}
and it is straightforward to check that
\begin{equation}\label{eq:est_with_sigma}
	V^{2}(i)=f_i(\Sigma)/\sigma^{2},
\end{equation}
where ~$\Sigma=\Cov(Y_{\bullet,1})$, with functions ~$f_i(\Sigma)=a_{i,\bullet}\Sigma a_{i,\bullet}^T$ ~for ~$i=1,\ldots,n-1$.
\\
\\
A natural estimate for ~$\Sigma$ ~is given via ~$\hat{\Sigma}$ ~with
\begin{equation}\label{eq:natural_estimate}
	\hat{\Sigma}_{j,k}=\frac{1}{d-1}\sum_{p=1}^{d} (Y_{j,p} - \bar{Y}_{j,d})(Y_{k,p} - \bar{Y}_{k,d}),
\end{equation}
where ~$1\leq j,k\leq n$ ~and ~$d>1$. This estimate ~$\hat{\Sigma}_{j,k}$ ~is consistent\footnote{The calculations are straightforward and the contribution of common factors vanishes asymptotically due to Assumption \ref{ass:common_factors}.}, as ~$d\rightarrow \infty$, e.g. if we assume additionally that ~$\cM_{i,k}=c_i$ ~always holds true for some ~$c_i\in\mR$ ~and all ~$k\geq 1$, i.e. that at all time points the means across all panels are the same.

Now, ~$V^{2}(i)$ ~simply may be estimated by ~$\hat{V}^{2}(i)=f_i(\hat{\Sigma})/\sigma^{2}$ ~and the corresponding estimate for the weights will be denoted by ~$\hat{w}^{\text{exact}}$. Here,  ~$\bar{Y}_{j,d}$ ~is the mean over all panels at time point ~$j$. Recall also, that our change point estimates do not depend on the scaling of the weights ~$w^{\text{exact}}(i,n)$. Thus, without loss of generality we may assume here that ~$\sigma^2=1$ ~and technically we do not have to estimate this parameter in \eqref{eq:est_with_sigma}.
 
\begin{remark}\label{rem:problematicestimation} Reasonable estimates for the exact weights have to be strictly positive which corresponds to positiveness of ~$f_i(\hat{\Sigma})$. This is ensured asymptotically with probability tending to 1, as ~$d\rightarrow\infty$, because the estimate ~$\hat{\Sigma}$ ~is consistent and because of our usual assumption ~$V(i)>0$. (Clearly, a sufficient condition for finite ~$d$ ~would be the positive definiteness of the estimate ~$\hat{\Sigma}$.)
\end{remark} 

\paragraph{Banded estimation based on a training period}\hfill\\\\ 
Now, we aim to increase the precision of the estimate for ~$\Sigma$ ~by averaging and by a banded covariance approach. To do so we have to impose some structural assumptions: We assume weak stationarity for ~$\{\varepsilon_{j,k}\}_{j\in\mN}$ ~to hold in time, i.e. within each panel and additionally, we assume to have a {\it training period} between ~$n_1$ ~and ~$n_2$,  ~$1\leq n_1< n_2\leq n$ ~where the above assumptions of ~$\cM_{i,k}=c_i$ ~hold true for ~$i=n_1,\ldots,n_2$ ~and  ~$k\geq 1$. Within this training period we can compute consistent estimates ~$\hat{\Sigma}^\prime_{j,k}\approx \Cov(\varepsilon_{j,1},\varepsilon_{k,1})$, according to \eqref{eq:natural_estimate}, for ~$j,k=n_1,\ldots,n_2$ ~and  then may average these estimates as follows
\begin{equation}\label{eq:estimate_centered1}
\xi_{r}=\frac{1}{n_2-n_1-r+1}\sum_{j=n_1}^{n_2-r} \hat{\Sigma}^\prime_{j,j+r}\qquad \Big(\approx \Cov(\varepsilon_{1,1},\varepsilon_{1+r,1})\Big)
\end{equation}
for ~$r=0,\ldots,n_2-n_1$ ~to gain more stability. (These estimates are consistent as well.)  Finally, the desired banded estimate ~$\hat{\Sigma}$ ~is obtained via
\begin{equation}\label{eq:estimate_centered2}
	\hat{\Sigma}_{j,j+r}=\hat{\Sigma}_{j+r,j}=
	\begin{cases}
	\xi_{r},& r=0,\ldots,\min\{h,n-j\},\\
	0,&  r=\min\{h,n-j\}+1,\ldots,n
	\end{cases}
\end{equation}
for  ~$j=1,\ldots,n$ ~and with some banding parameter ~$h\in\{0,\ldots,n_2-n_1\}$ ~to be chosen.   Clearly, the corresponding estimate ~$\hat{V}^{2}(i)=f_i(\hat{\Sigma})/\sigma^{2}$ ~is consistent, as ~$d\rightarrow\infty$, for panels that are MA($q$) in time if ~$q\leq h\leq  n_2-n_1$ ~holds true. Heuristically, it yields also a reasonable approximation for ~$\hat{V}^{2}(i)=f_i(\hat{\Sigma})/\sigma^{2}$ ~in case of stationary and weak dependent panels, again in the time-domain, whenever the covariances ~$\Cov(\varepsilon_{1,1},\varepsilon_{1+r,1})$ ~for ~$r>h$ ~are negligible. The corresponding estimate for the weights will be denoted by ~$\hat{w}^{\text{exact-banded}}$. 
For a data-driven selection of the banding parameter ~$h$ ~one could use the approach of \citet[Section 5 and disp. (24)]{bickel2008} (cf., e.g., also \cite{wu2009}). In their simulations for MA(1) covariance structure, the banding parameter is always chosen correctly, i.e. ~$h=1$.

The assumptions stated above are restrictive and may be questionable in applications. In the following we (informally) discuss estimates for more complex situations when ~$\cM_{i,k}=c_i$ ~for ~$k\geq 1$ ~does not hold true within a reasonable subsample. 
Again, we need to assume a training period between ~$n_1$ ~and ~$n_2$,  ~$1\leq n_1< n_2\leq n$, such that either ~$n_2\leq u$ ~or ~$n_1>u$ ~holds true, i.e. that a common change does not occur in this subsample.  Further, we need to assume stationarity and the weak law of large numbers to hold in time for ~$\{\varepsilon_{j,k}\}_{k\in\mN}$, i.e. within each panel.  
Now, in the first step, we center each panel based on means computed within the training period and for each panel separately, i.e.
\[
	Y_{i,k}':=Y_{i,k} - \frac{1}{n_2-n_1+1}\sum_{j=n_1}^{n_2}Y_{j,k}
\]
for ~$i=1,\ldots,n$ ~and ~$k=1,\ldots,d$. In the next step (as before) we compute only estimates ~$\hat{\Sigma}^\prime_{j,k}$ ~for ~$n_1\leq j \leq k \leq n_2$ ~but now based on the centered panels ~$\{Y_{j,k}'\}$. Proceeding as under \eqref{eq:estimate_centered1} and \eqref{eq:estimate_centered2} we obtain ~$\hat{V}^{2}(i)=f_i(\hat{\Sigma})/\sigma^{2}$ ~and the corresponding weights ~$\hat{w}^{\text{exact-banded}}_{\text{centered}}$. Those are heuristically reasonable for large ~$n$, ~$d$ ~and a relatively large training period which is backed up by our simulations. However, to formalize this one would need to consider asymptotics ~$n,d\rightarrow\infty$ ~with ~$|n_2-n_1|\rightarrow\infty$ ~which is not in the scope of this paper.

\section{Simulations}\label{sec:simulations} 
For our simulations within the single change point scenario we have implemented the estimates \eqref{eq:lasso_cusum}\footnote{We choose ~$\hat{u}_{\star}$ ~as the smallest element in ~$\argmax t(i)$.} in MATLAB. For demonstration purposes an application with a graphical user interface can be obtained from the author or from \url{www.mi.uni-koeln.de/~ltorgovi}. For the simulations within the multiple change point scenario we work with the MATLAB ``GFLseg''-package of \cite{vert2011b}. Notice that the denoising approach may be interpreted as a group fused LASSO (least absolute shrinkage and selection operator) (cf. Section \ref{sec:proofs}) and that the corresponding group fused LARS (least angle regression) method yields a fast approximation to the LASSO solution (cf. \cite{vert2011a}). In particular, we use the LASSO and LARS methods that are implemented in gflassoK.m or in gflars.m respectively.

We proceed by considering panels ~$\{Y_{i,k}\}$ ~with the following parameters: The noise ~$\{\varepsilon_{i,k}\}$ ~is moving average from Example \ref{example:MA1} based on independent Gaussian innovations ~$\{\eta_{i,k}\}$. The common factors ~$\{\zeta_i\}$ ~are chosen to be independently uniformly distributed, centered and with the same variance as the noise ~$\{\eta_{i,k}\}$. Unless stated otherwise, the factor loadings are set to ~$\gamma_k=k^{-1/2}$ ~and the length of panels is ~$n=100$. For simplicity, we choose common changes  with ~$\mu_{1,k}=0$ ~and ~$\mu_{2,k}=1$ ~for all ~$k$, i.e. ~$\Delta_k=1$ ~and therefore ~$\Delta_\infty=1$. For ~$\hat{w}^{\text{exact-banded}}$~and for ~$\hat{w}^{\text{exact-banded}}_{\text{centered}}$ ~the training period is chosen as ~$n_1=1$ ~and ~$n_2=20$ ~with a  bandwidth ~$h=2$. (The influence of the misspecification of ~$h$ ~is rather mild in our settings.)

  All Monte Carlo simulations will be based on 100 repetitions. Notice that ~$w^{\text{standard}}$ ~replaces ~$\hat{w}$, whenever the corresponding estimate ~$\hat{V}=1/\hat{w}$ ~has at least one non-positive entry (cf. Remark \ref{rem:problematicestimation}).

\begin{figure}%
\centering
\subfloat[][Rescaled ~$\tilde{t}(i)$'s]{%
\includegraphics[width=0.40\textwidth, trim = 5mm 0mm 5mm 5mm]{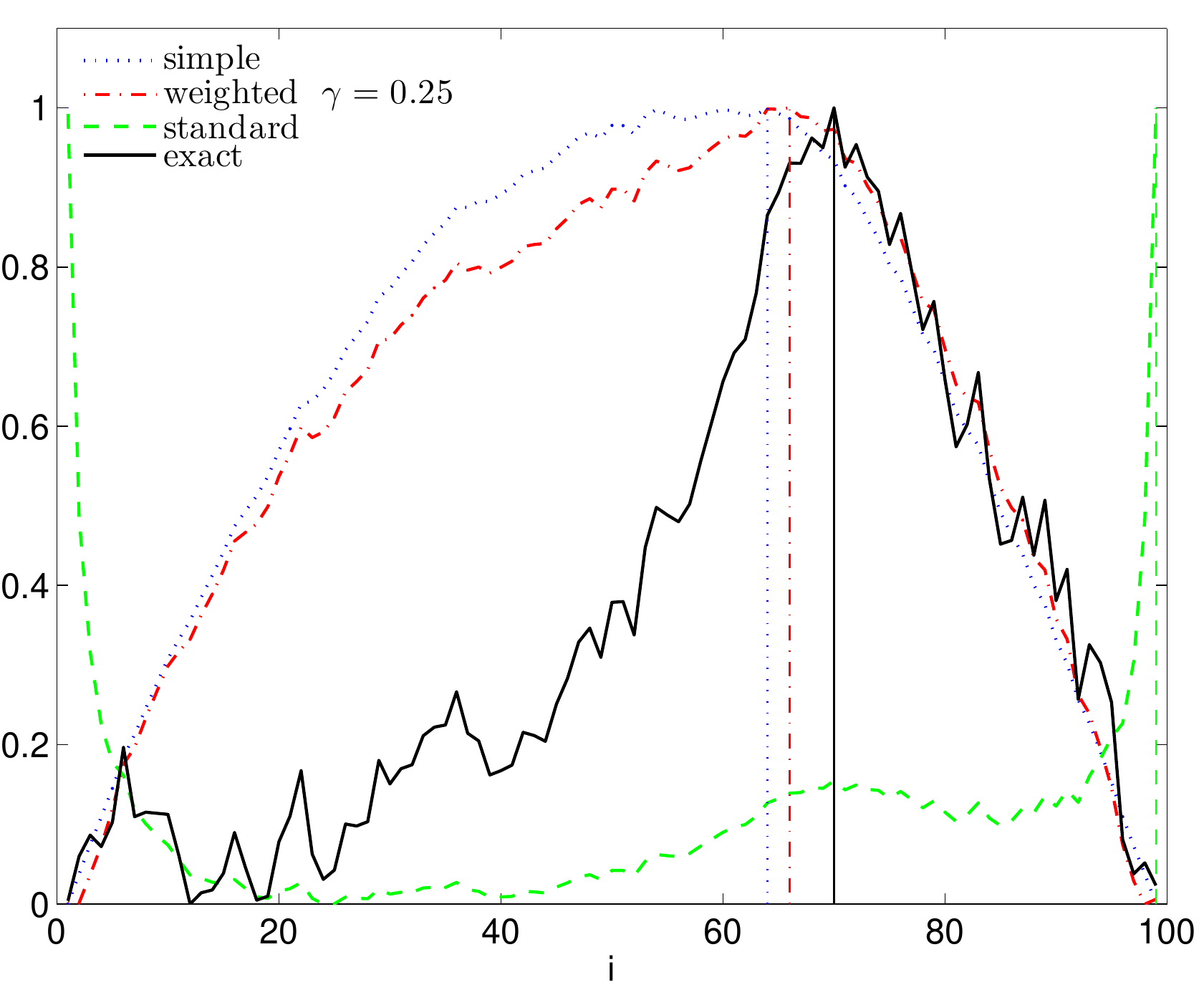}
}%
\hspace{40pt}%
\subfloat[][Rescaled critical curves ~$\tilde{C}_n(i;u,r)$]{%
\includegraphics[width=0.40\textwidth, trim = 5mm 0mm 5mm 5mm]{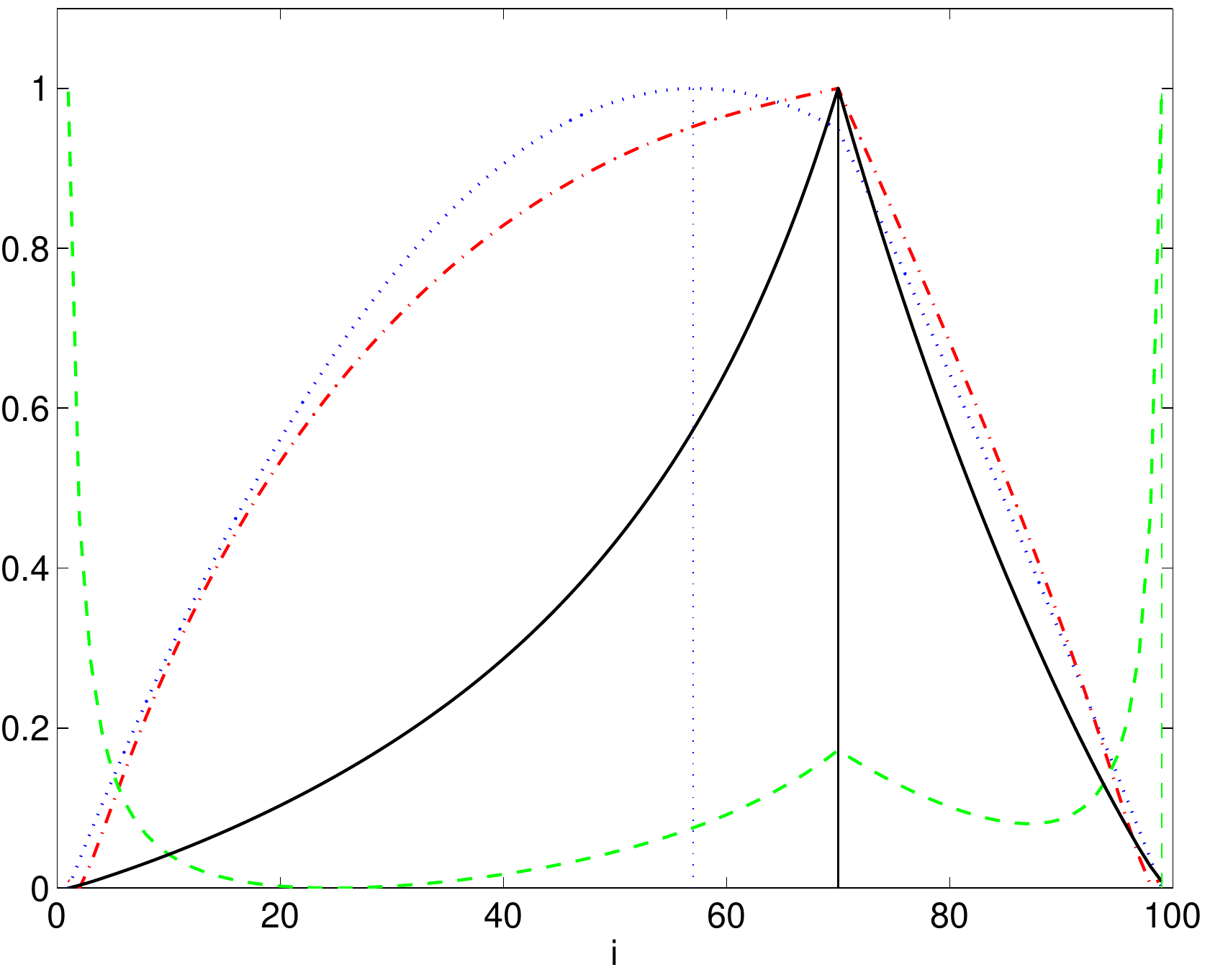}
}%
\caption{Estimation of the change point at ~$u=70$ ~with different weighting schemes. The parameters are ~$\tilde{\sigma}^2=9$, $\phi=-3$, $\theta=1$ ~and  ~$d=10000$.}%
\label{fig:ti_critfunc}%
\end{figure}

\subsection{Segmentation under dependence}\label{sec:segmentation_dependence}
 
Figure \ref{fig:ti_critfunc} shows one realization of ~$\tilde{t}(i)$'s from \eqref{eq:lasso_cusum} for different weighting schemes\footnote{For the sake of comparison we shift and rescale the curves by the transformation: $f(i)\mapsto \tilde{f}(i)=[f(i)-\min_{1\leq j <n}f(j)]/[\max_{1\leq j <n}f(j)-\min_{1\leq j<n}f(j)]$.} and the corresponding critical curves ~$\tilde{C}_n(i;u,r)$ ~from \eqref{eq:def_critical_function}, which are in probability the limits of the ~$\tilde{t}(i)$'s as ~$d\rightarrow\infty$. The vertical lines indicate the locations of maxima for the respective weightings, i.e. the positions of estimates ~$\hat{u}_\star$ ~and ~$\hat{u}$. We see that ~$w^{\text{exact}}$ ~provides a correct estimation of ~$u=70$ ~whereas ~$w^{\text{standard}}$ ~does not.  For ~$w^{\text{simple}}$ ~we estimate a more centered change point and ~$w^{\text{standard}}$ ~estimates a completely wrong location at the right border.

 Figures \ref{fig:u55} and \ref{fig:u90} demonstrate the performance of the estimates with respect to different weighting schemes. They compare the accuracy ~$P(\hat{u}_\star=u)$, the means and the standard deviation of the estimate ~$\hat{u}_\star$. We ran simulations with ~$\theta=1$ ~and ~$\tilde{\sigma}^2=9$ ~and considered a range of parameters: ~$\phi=-2,-1,-0.5,0,1$ ~and ~$d=50,150,250,200,\ldots,1950,2000$.

The figures show that the change point estimate based on the (estimated) exact weighting scheme ~$w^{\text{exact}}$ ~outperforms ~$w^{\text{standard}}$. The former estimates all changes correctly for arbitrary ~$\phi$ ~if we consider a sufficiently large number of panels ~$d$. 
The exact scheme is less biased and also has overall less variation for negative ~$\phi$. Furthermore, we see for chosen parameters that the distortion due to estimation with ~$\hat{w}^{\text{exact-banded}}_{\text{centered}}$ ~is rather mild\footnote{The results for ~$\hat{w}^{\text{exact-banded}}$ ~are close to ~$\hat{w}^{\text{exact-banded}}_{\text{centered}}$ ~in this particular setting and are therefore omitted.}. However, it might be strong for other parameters e.g. if the training period is too small. Notice that change point estimation with ~$\hat{w}^{\text{exact}}$ ~may (surprisingly) perform even better than with ~$w^{\text{exact}}$ (cf. Figure \ref{fig:u90}).

\begin{figure}%
\captionsetup[subfigure]{labelformat=empty}
\centering 
\vspace{-25pt}
\subfloat[][]{%
\label{fig:accuracy-fixed-55-standard}%
\includegraphics[width=0.4\textwidth, trim = 5mm 0mm 5mm 5mm]{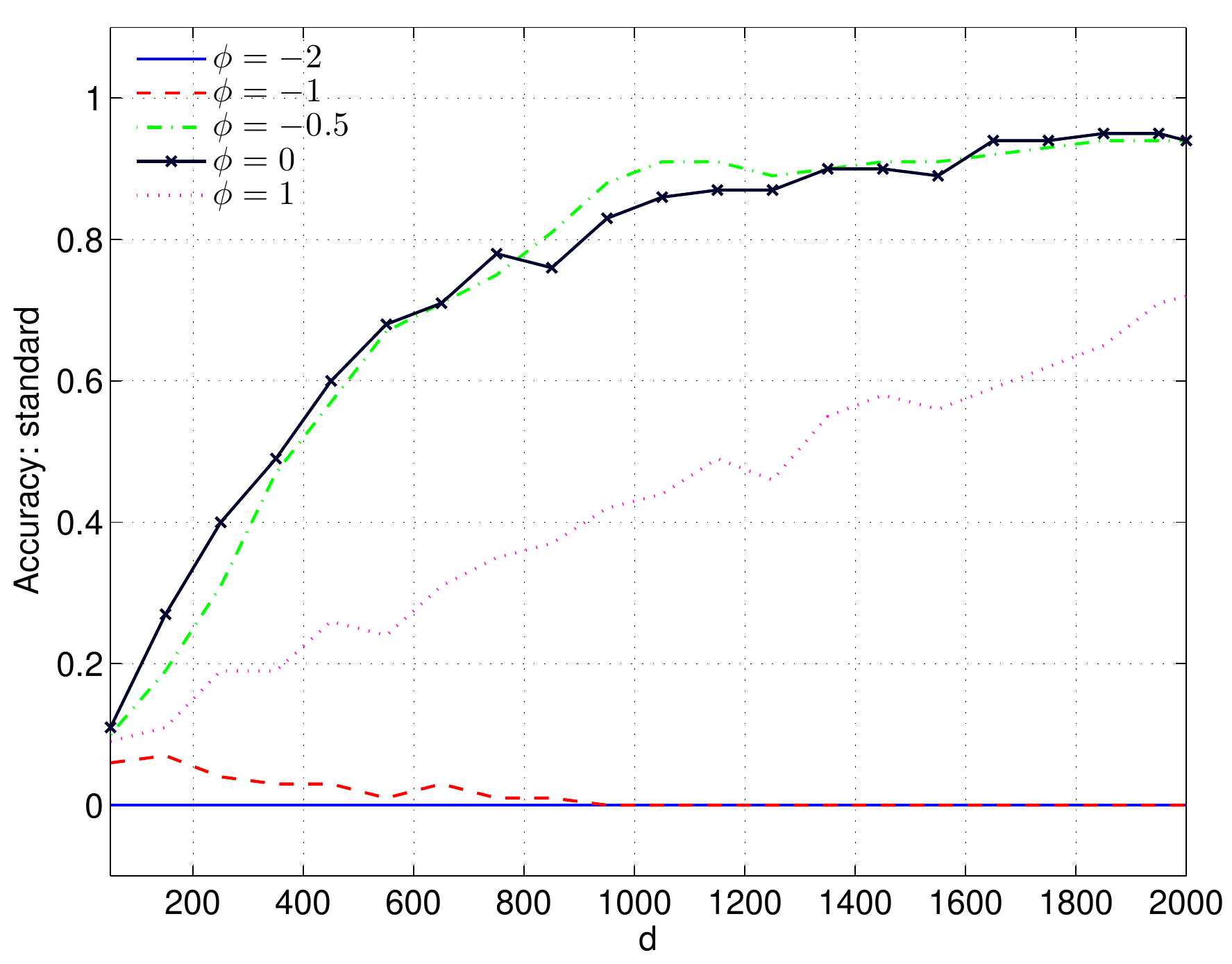}
}%
\hspace{40pt}%
\subfloat[][]{%
\label{fig:accuracy-fixed-55-exact}%
\includegraphics[width=0.4\textwidth, trim = 5mm 0mm 5mm 5mm]{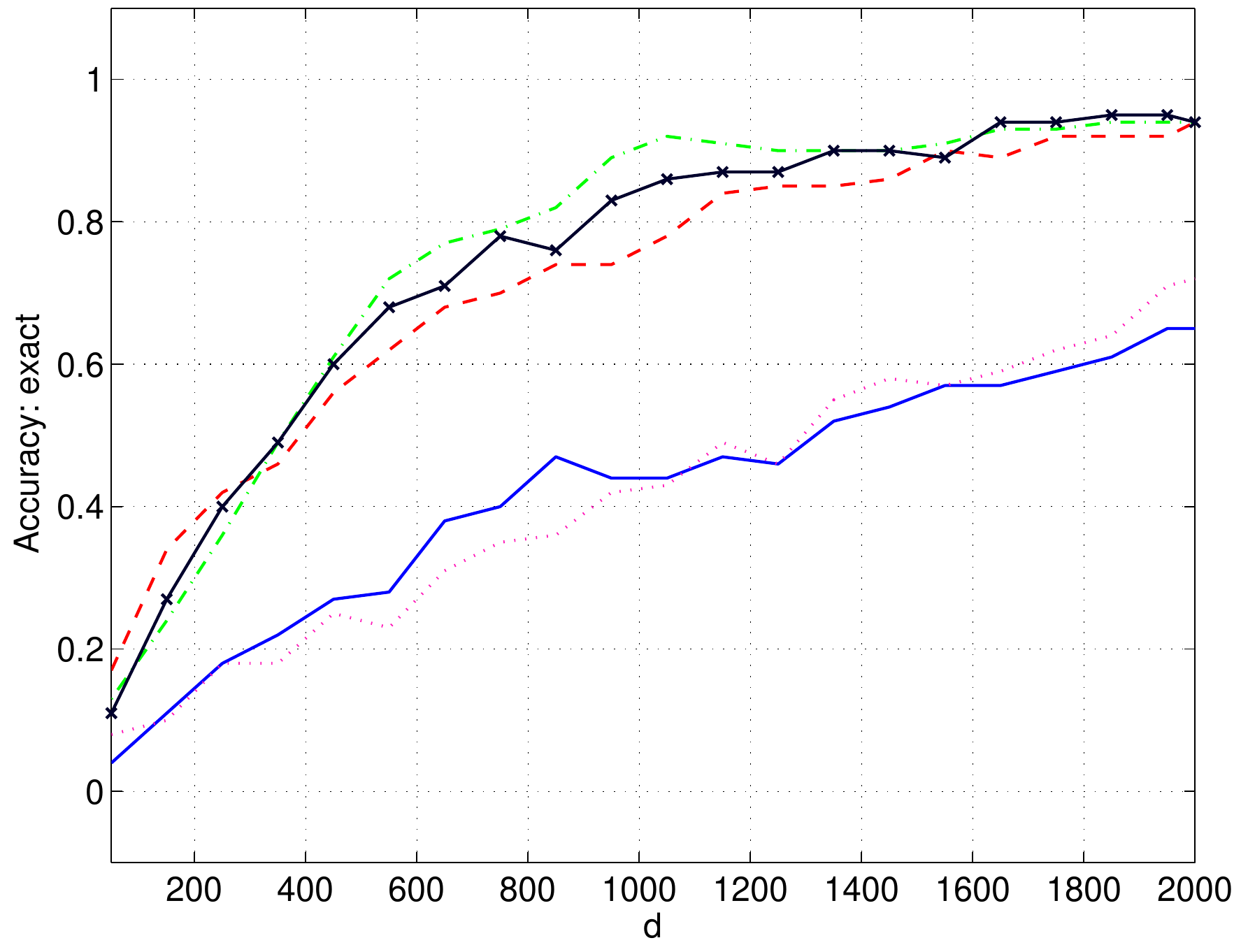}
}%
\\
\subfloat[][]{%
\label{fig:accuracy-fixed-55-exact-est}%
\includegraphics[width=0.4\textwidth, trim = 5mm 0mm 5mm 5mm]{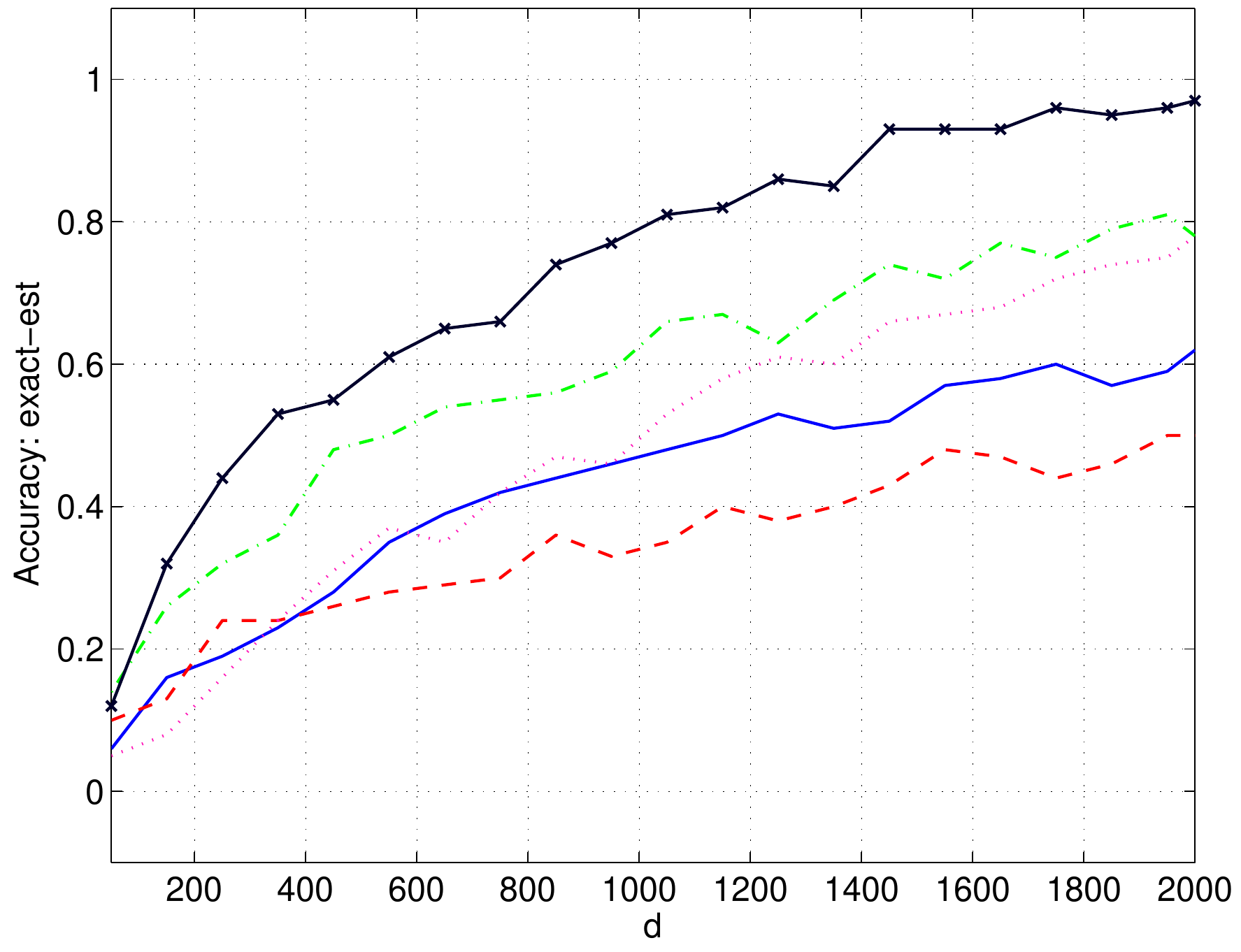}
}%
\hspace{40pt}%
\subfloat[][]{%
\label{fig:accuracy-fixed-55-exact-est-banded-center}%
\includegraphics[width=0.4\textwidth, trim = 5mm 0mm 5mm 5mm]{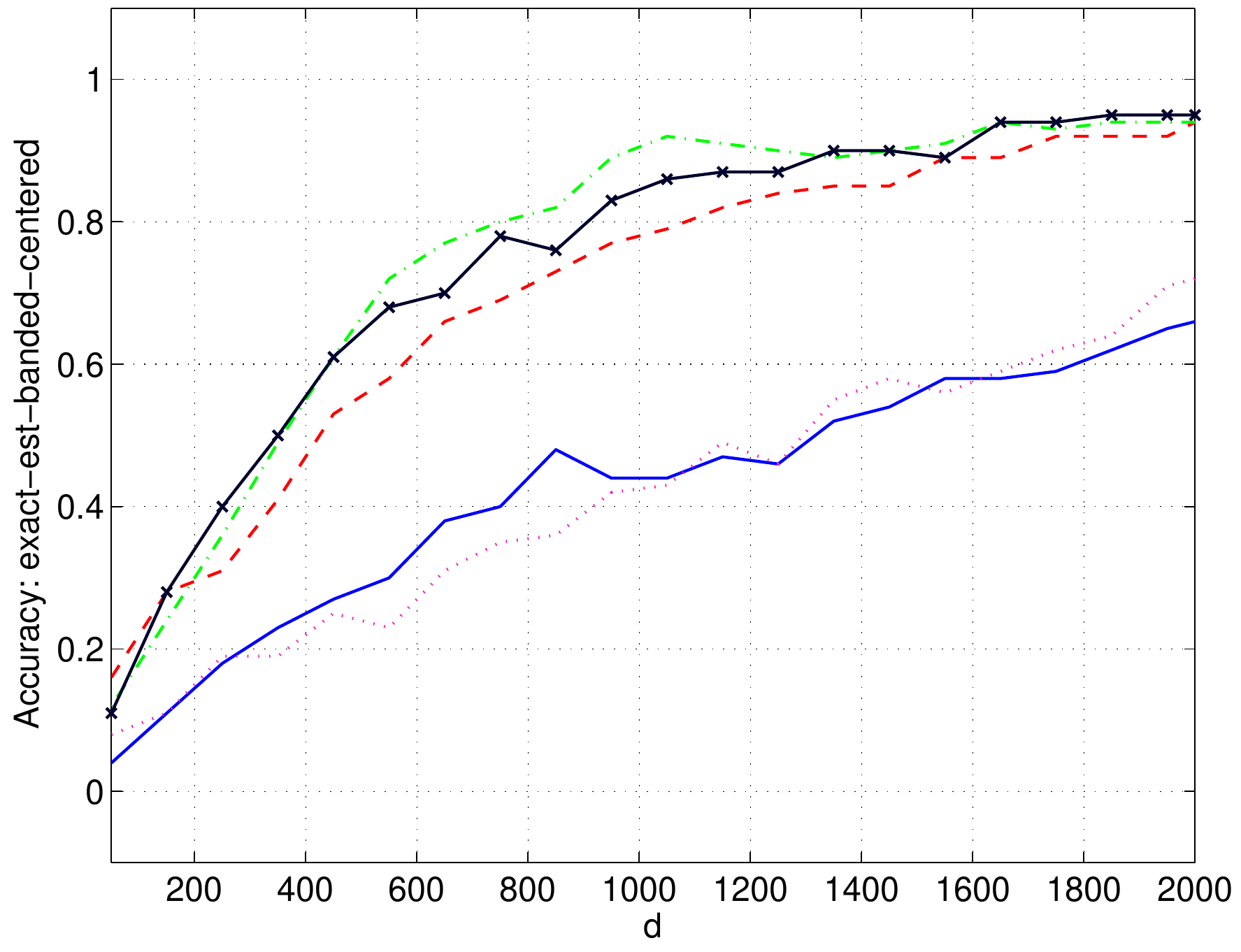}
}%
\\ 
\subfloat[][]{%
\label{fig:means-fixed-55-standard}%
\includegraphics[width=0.4\textwidth, trim = 5mm 0mm 5mm 5mm]{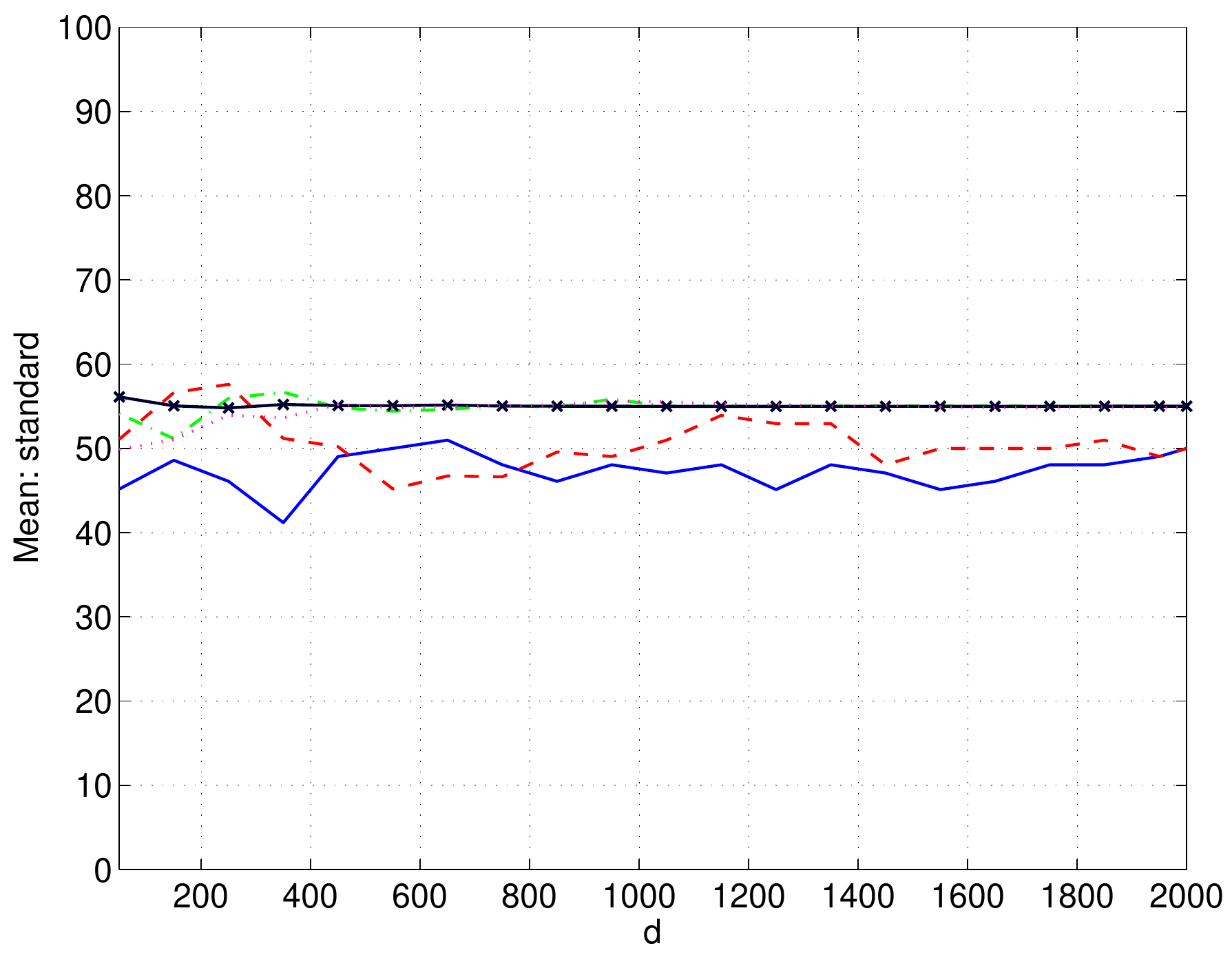}
}%
\hspace{40pt}%
\subfloat[][]{%
\label{fig:means-fixed-55-exact}%
\includegraphics[width=0.4\textwidth, trim = 5mm 0mm 5mm 5mm]{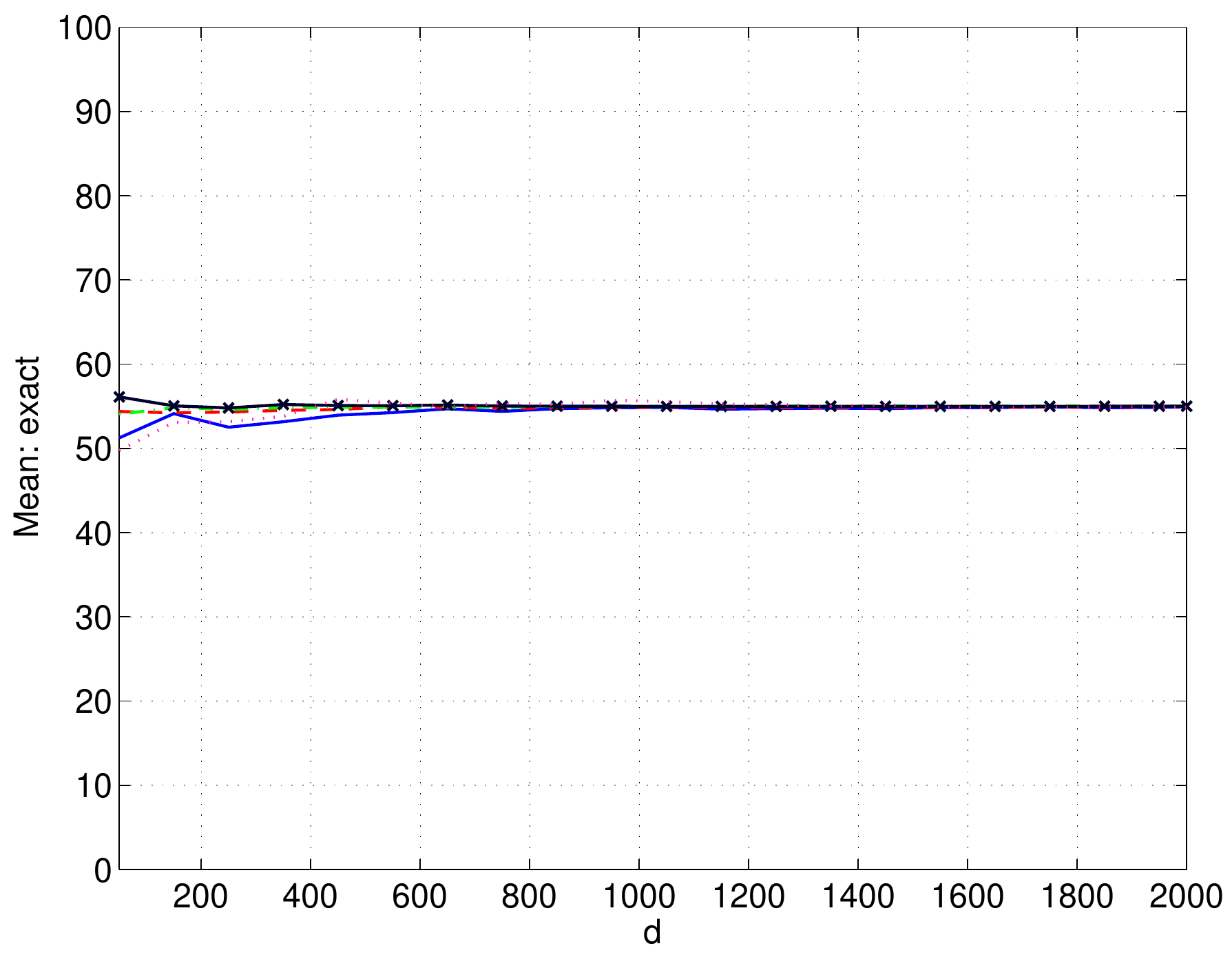}
}%
\\
\subfloat[][]{%
\label{fig:std-fixed-55-standard}%
\includegraphics[width=0.4\textwidth, trim = 5mm 0mm 5mm 5mm]{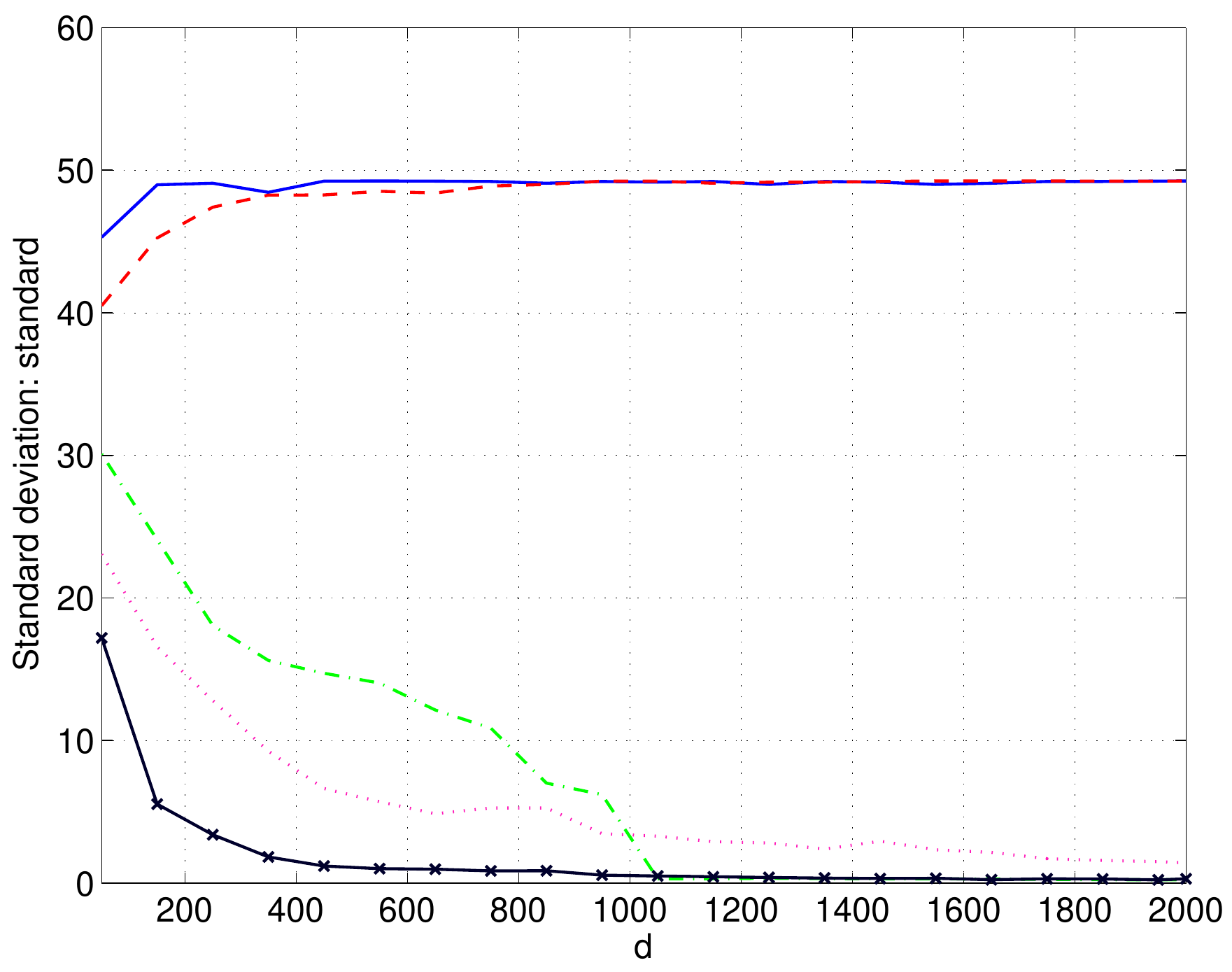}
}%
\hspace{40pt}%
\subfloat[][]{%
\label{fig:std-fixed-55-exact-est}%
\includegraphics[width=0.4\textwidth, trim = 5mm 0mm 5mm 5mm]{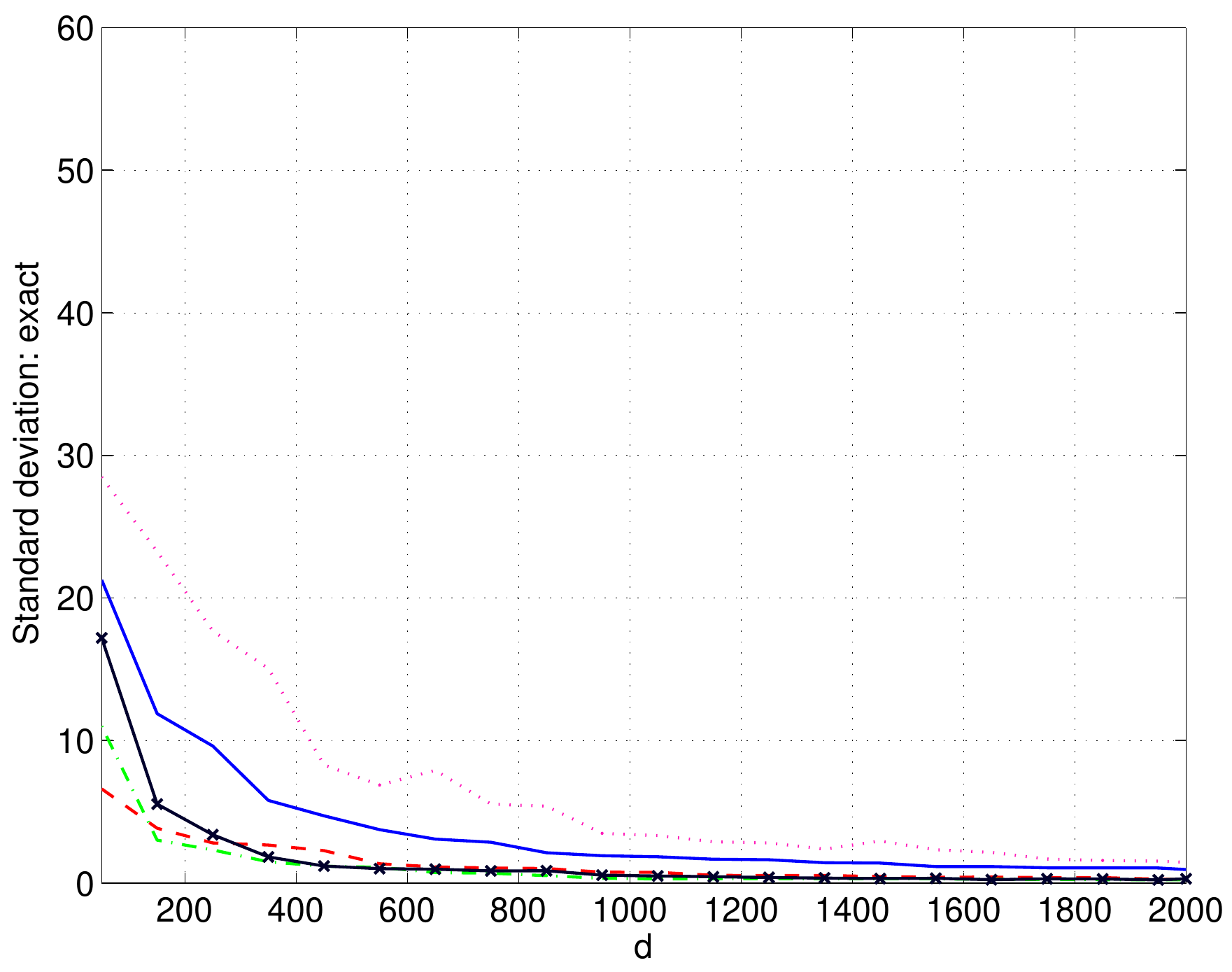}
}%
\caption{Accuracy, mean and standard deviation for ~$u=55$. ~$\hat{w}^{\text{exact}}$ ~is denoted by ``exact-est'' and ~$\hat{w}^{\text{exact-banded}}_{\text{centered}}$ ~is denoted by ``exact-est-banded-centered''.}%
\label{fig:u55}%
\end{figure}

\begin{figure}%
\captionsetup[subfigure]{labelformat=empty}
\centering 
\vspace{-25pt}
\subfloat[][]{%
\label{fig:accuracy-fixed-90-standard}%
\includegraphics[width=0.4\textwidth, trim = 5mm 0mm 5mm 5mm]{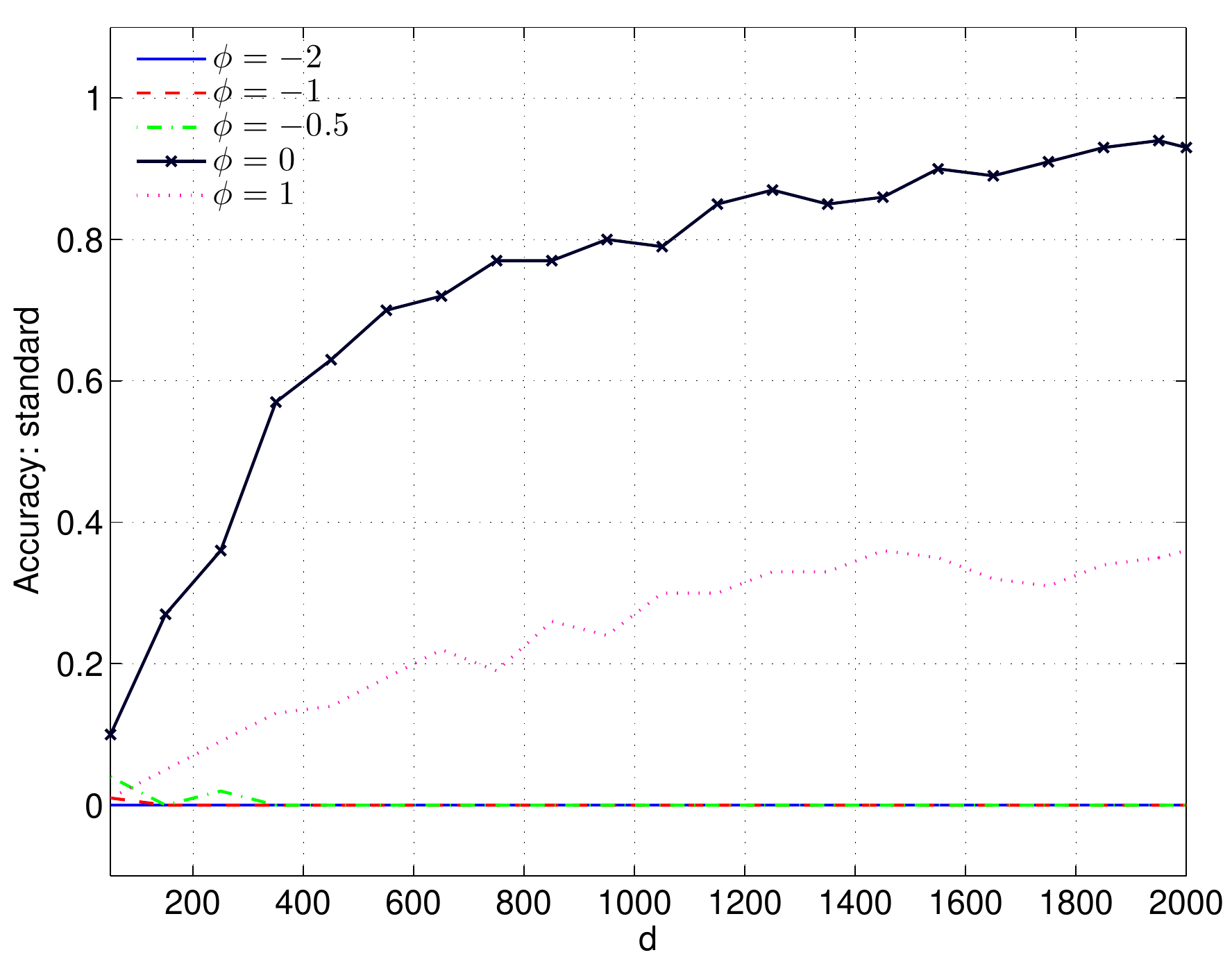}
}%
\hspace{40pt}%
\subfloat[][]{%
\label{fig:accuracy-fixed-90-exact}%
\includegraphics[width=0.4\textwidth, trim = 5mm 0mm 5mm 5mm]{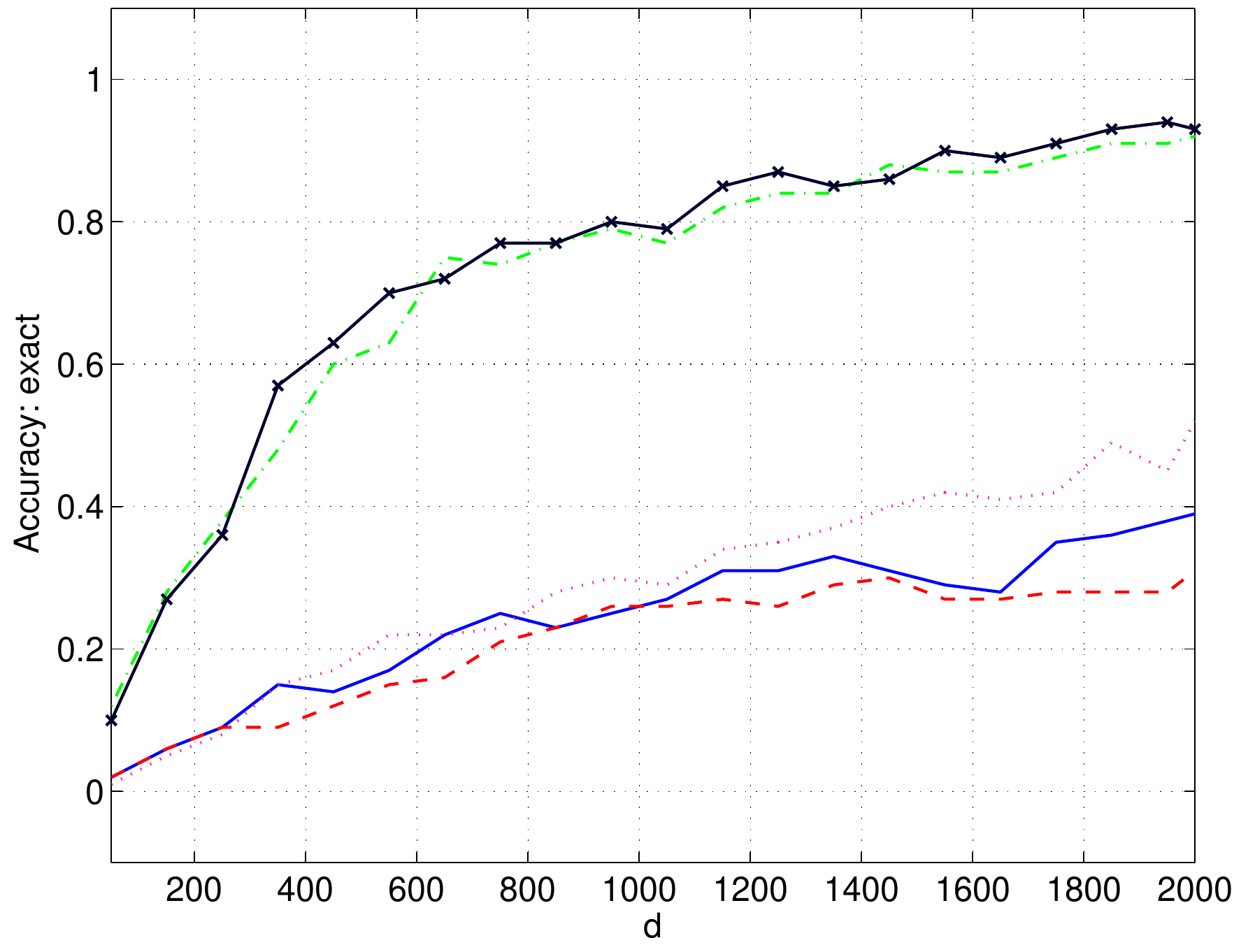}
}%
\\
\subfloat[][]{%
\label{fig:accuracy-fixed-90-exact-est}%
\includegraphics[width=0.4\textwidth, trim = 5mm 0mm 5mm 5mm]{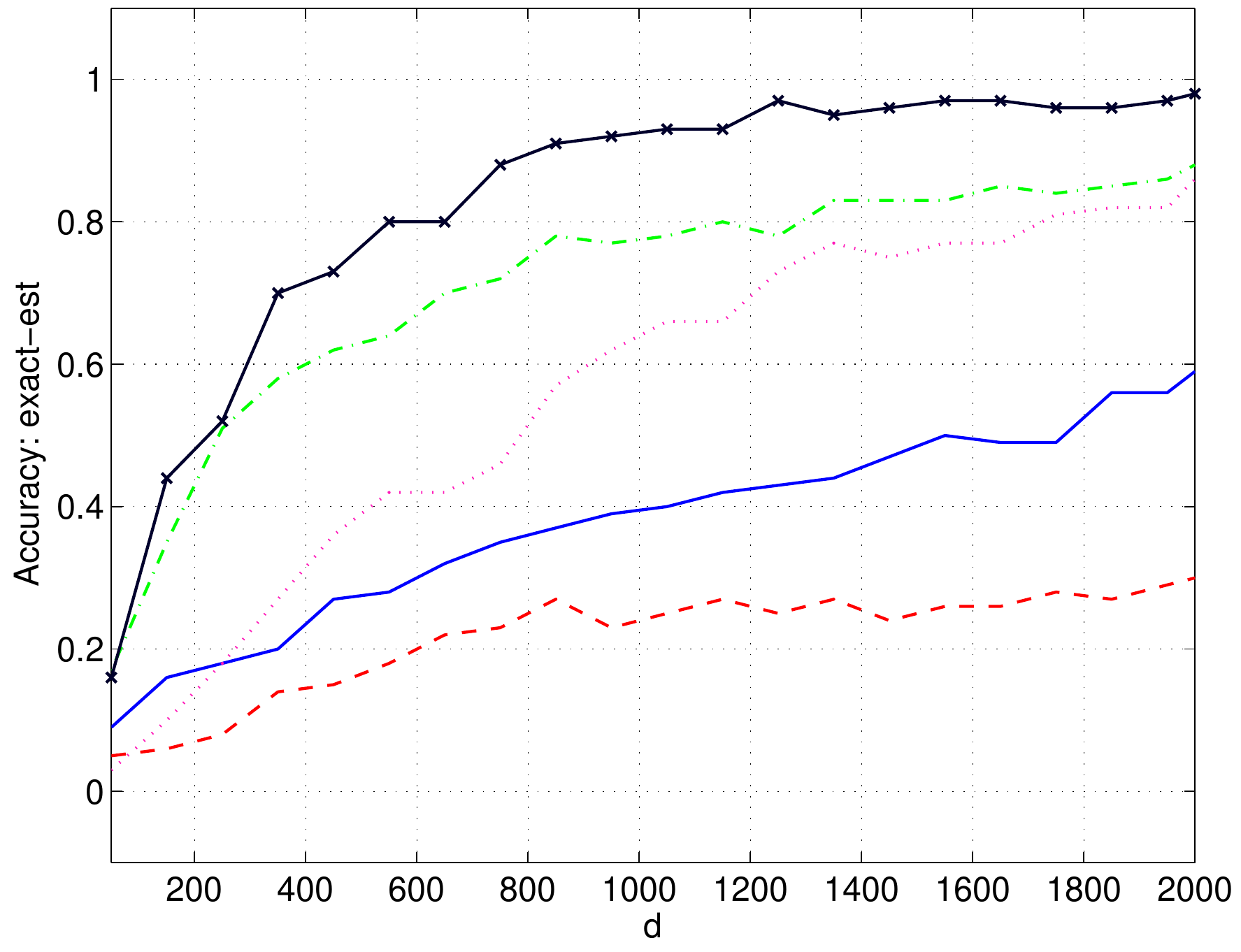}
}%
\hspace{40pt}%
\subfloat[][]{%
\label{fig:accuracy-fixed-90-exact-est-banded}%
\includegraphics[width=0.4\textwidth, trim = 5mm 0mm 5mm 5mm]{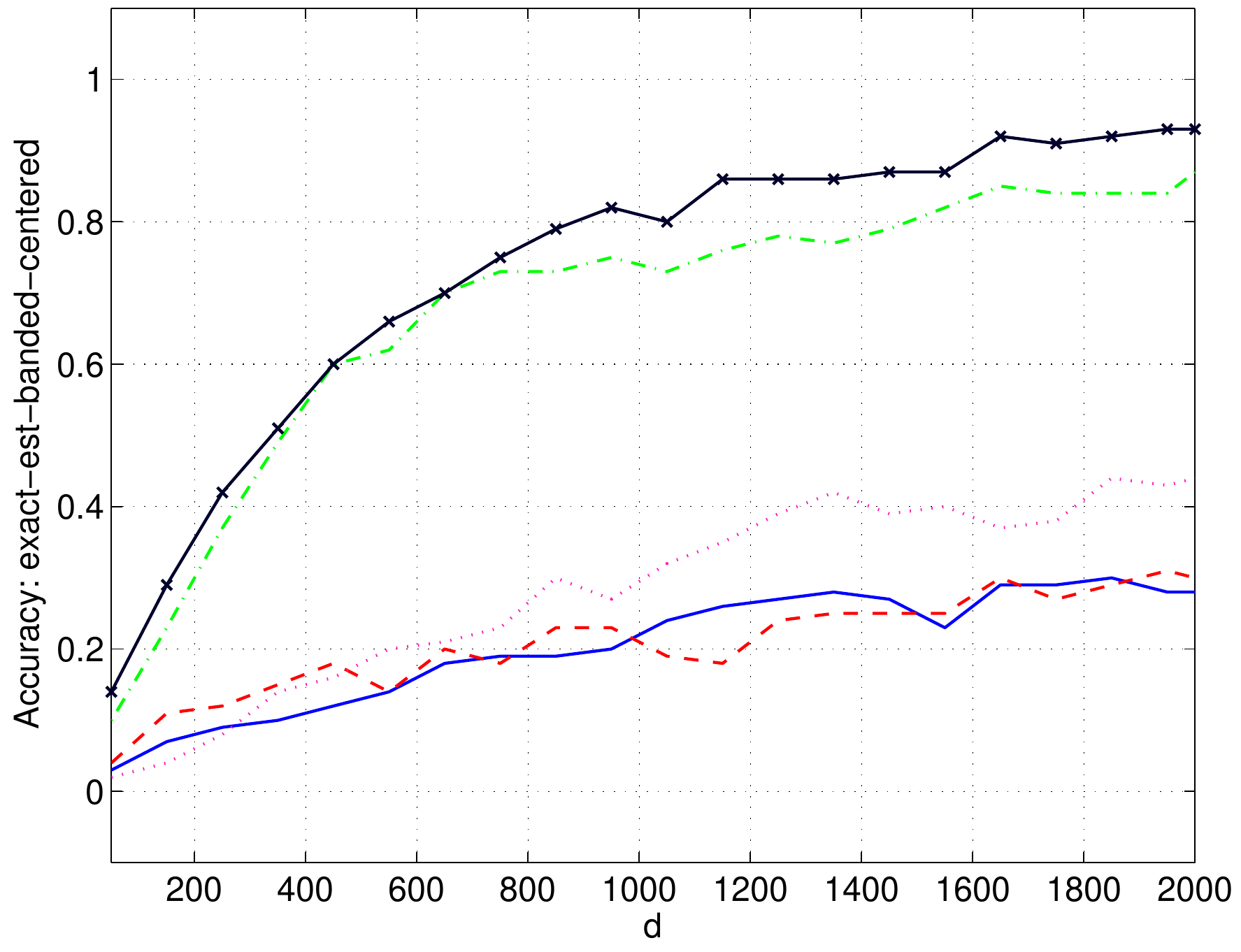}
}%
\\
\subfloat[][]{%
\label{fig:means-fixed-90-standard}%
\includegraphics[width=0.4\textwidth, trim = 5mm 0mm 5mm 5mm]{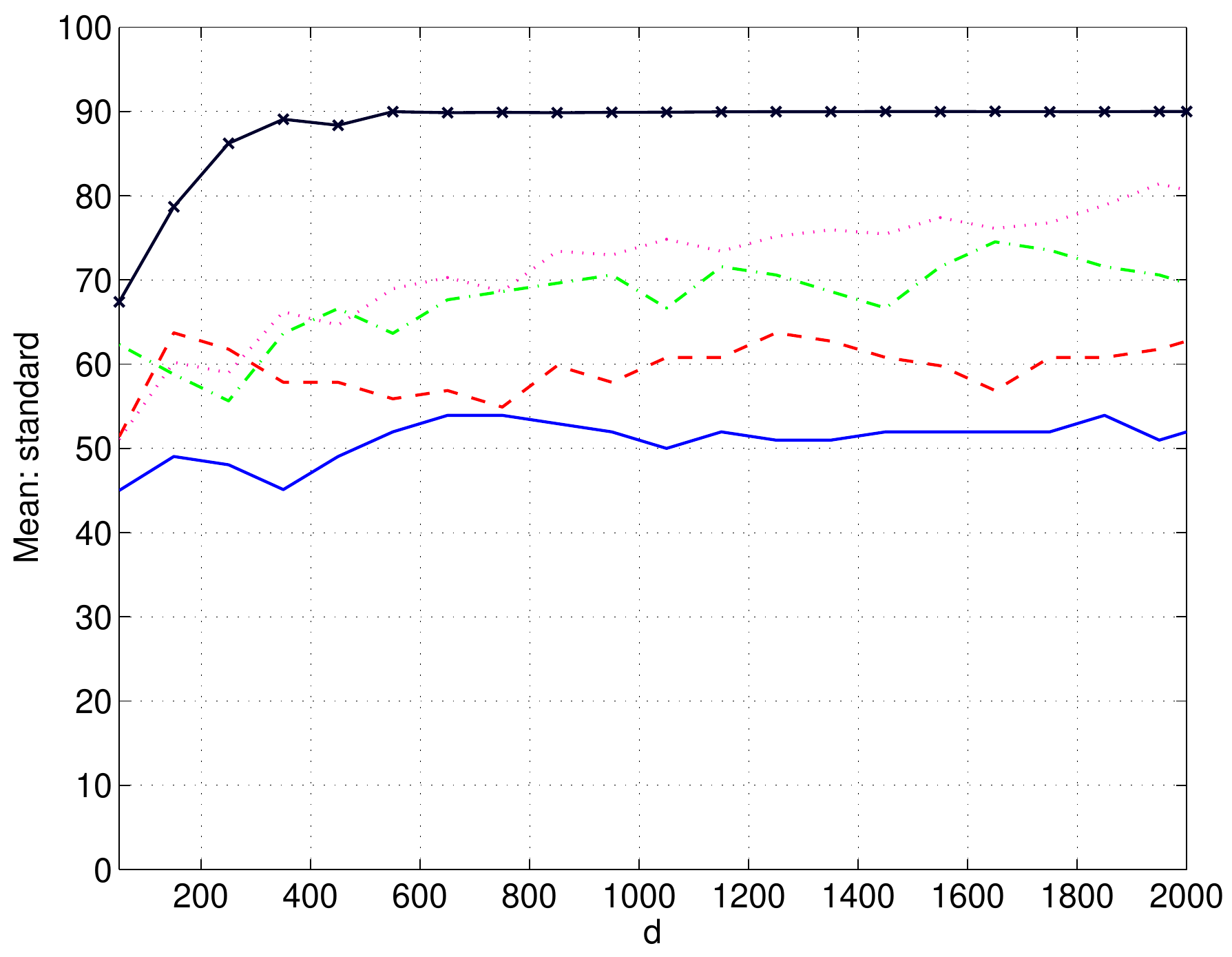}
}%
\hspace{40pt}%
\subfloat[][]{%
\label{fig:means-fixed-90-exact}%
\includegraphics[width=0.4\textwidth, trim = 5mm 0mm 5mm 5mm]{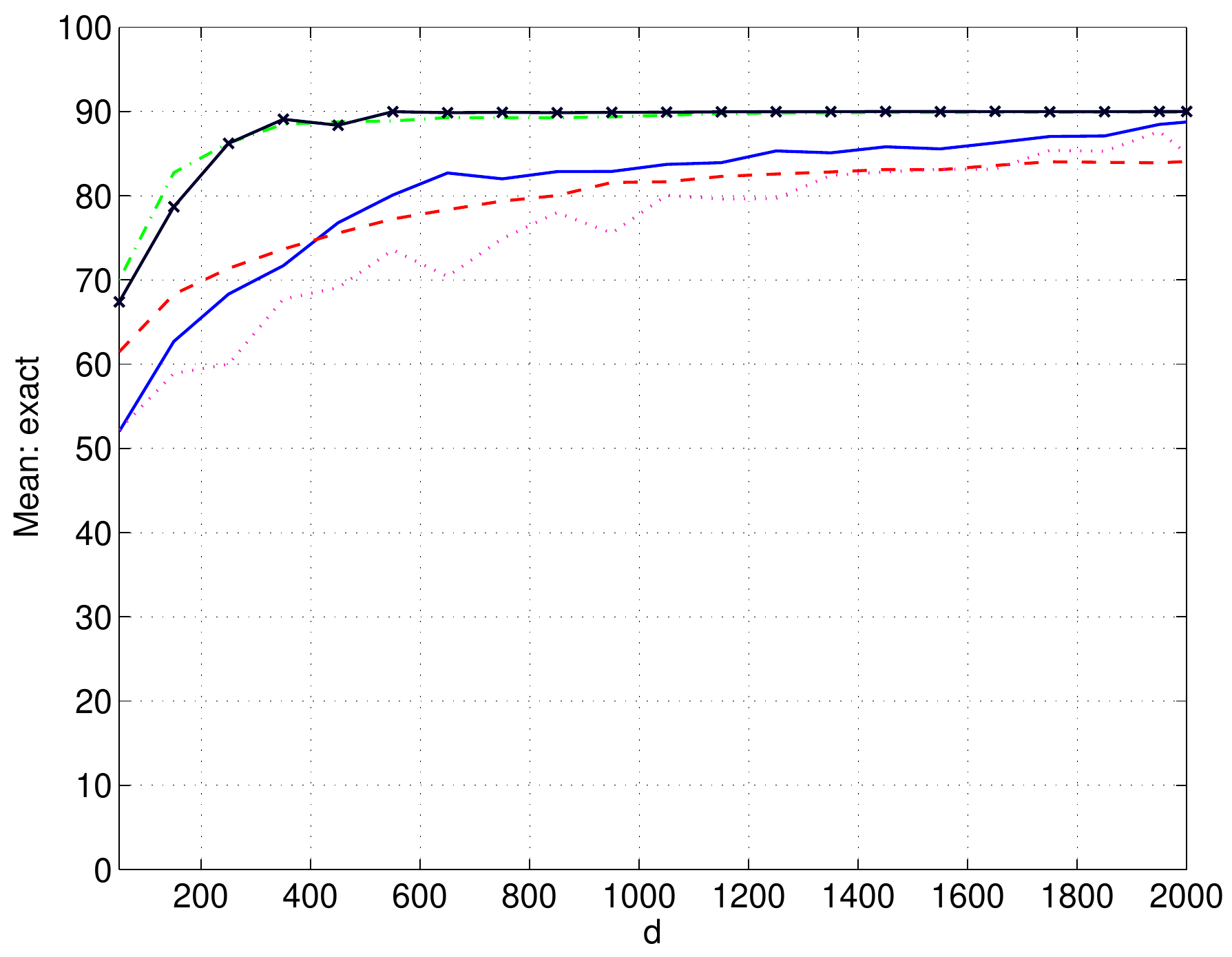}
}%
\\
\subfloat[][]{%
\label{fig:std-fixed-90-standard}%
\includegraphics[width=0.4\textwidth, trim = 5mm 0mm 5mm 5mm]{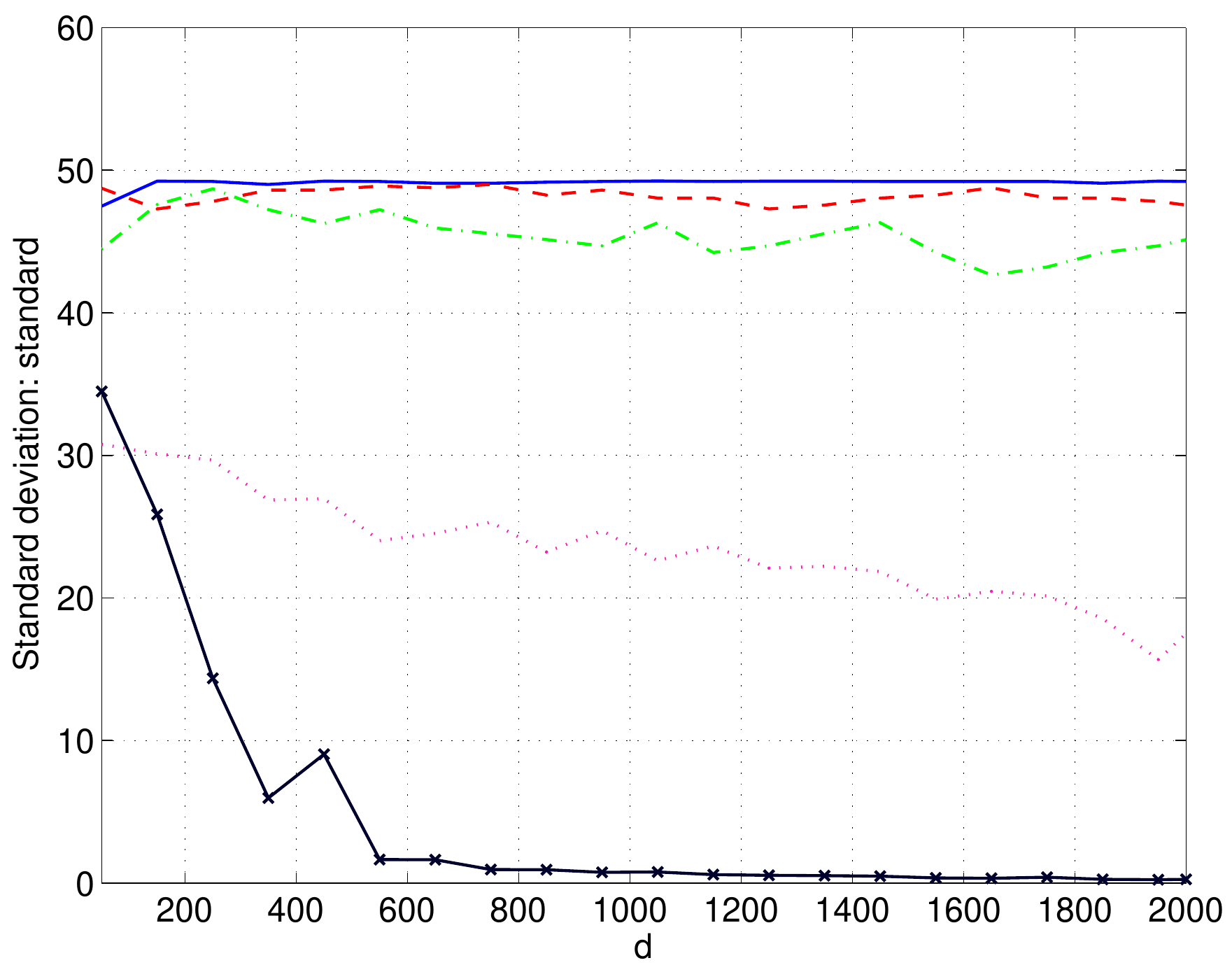}
}%
\hspace{40pt}%
\subfloat[][]{%
\label{fig:std-fixed-90-exact}%
\includegraphics[width=0.4\textwidth, trim = 5mm 0mm 5mm 5mm]{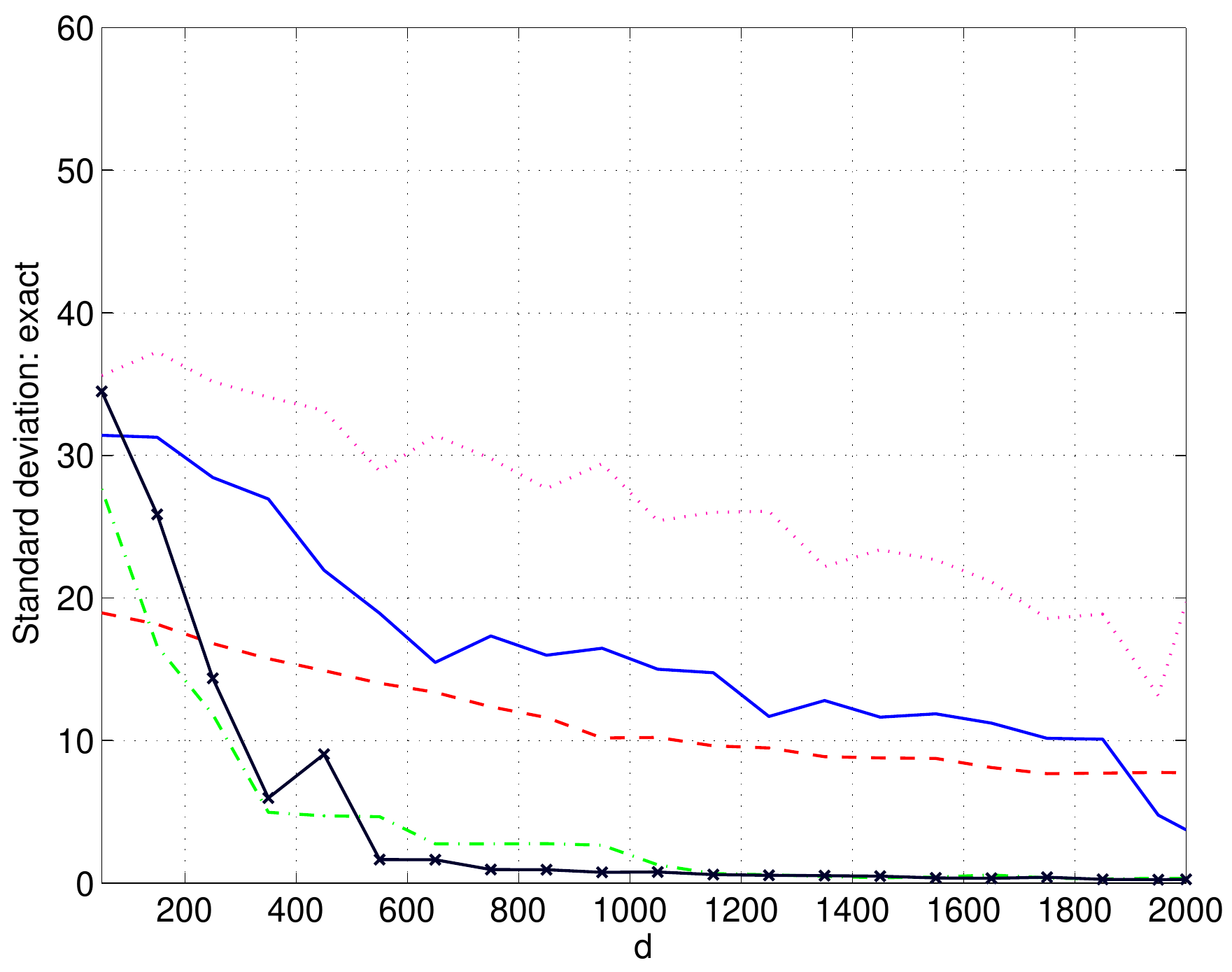}
}%
\caption{Accuracy, mean and standard deviation for ~$u=90$}%
\label{fig:u90}%
\end{figure}

\subsection{Segmentation under varying change point locations}

So far, we assumed a fixed change point at some time point ~$u$ ~across all panels. It seems more realistic to allow for some minor fluctuations (around time point ~$u$) of change points in different panels.  Therefore, \cite{vert2011a} studied the behaviour of their procedure under randomized change points theoretically and empirically. They considered changes across panels ~$k=1,\ldots,d$ ~that are located at random change points ~$u+U_k\in\{1,\ldots,n-1\}$, where ~$\{U_k\}$ ~are some i.i.d. random variables describing the fluctuations\footnote{$\{U_k\}$ are assumed to be also independent of ~$\{\varepsilon_{i,k},\zeta_i; \; i,k\in\mN\}$.}.  In \cite[cf. Theorem 4 and Figure 3]{vert2011a} they showed under appropriate assumptions that the standard weighting works also well in this setting in the sense that the probability ~$P(\hat{u}_\star\in u+S)$ ~tends to ~1 ~as ~$d\rightarrow\infty$, ~where ~$S$ ~is the support of ~$P_{U_1}$. 
We do not develop the theoretical analogue but show in Figure \ref{fig:vary} empirically that, as should be expected, the exact weighting tends to be beneficial under dependence. For this simulation we stick to the panels and simulation parameters of Subsection \ref{sec:segmentation_dependence} with ~$\tilde{\sigma}^2=9$ ~and ~$\theta=1$. As in \cite{vert2011a}, we assume ~$P(U_k=\pm 2)=0.5$ ~and we use the term {\it accuracy} now for  ~$P(\hat{u}_\star \in u+S)$. 
 
\begin{figure}%
\captionsetup[subfigure]{labelformat=empty}
\centering  
\subfloat[][]{%
\includegraphics[width=0.4\textwidth, trim = 5mm 0mm 5mm 5mm]{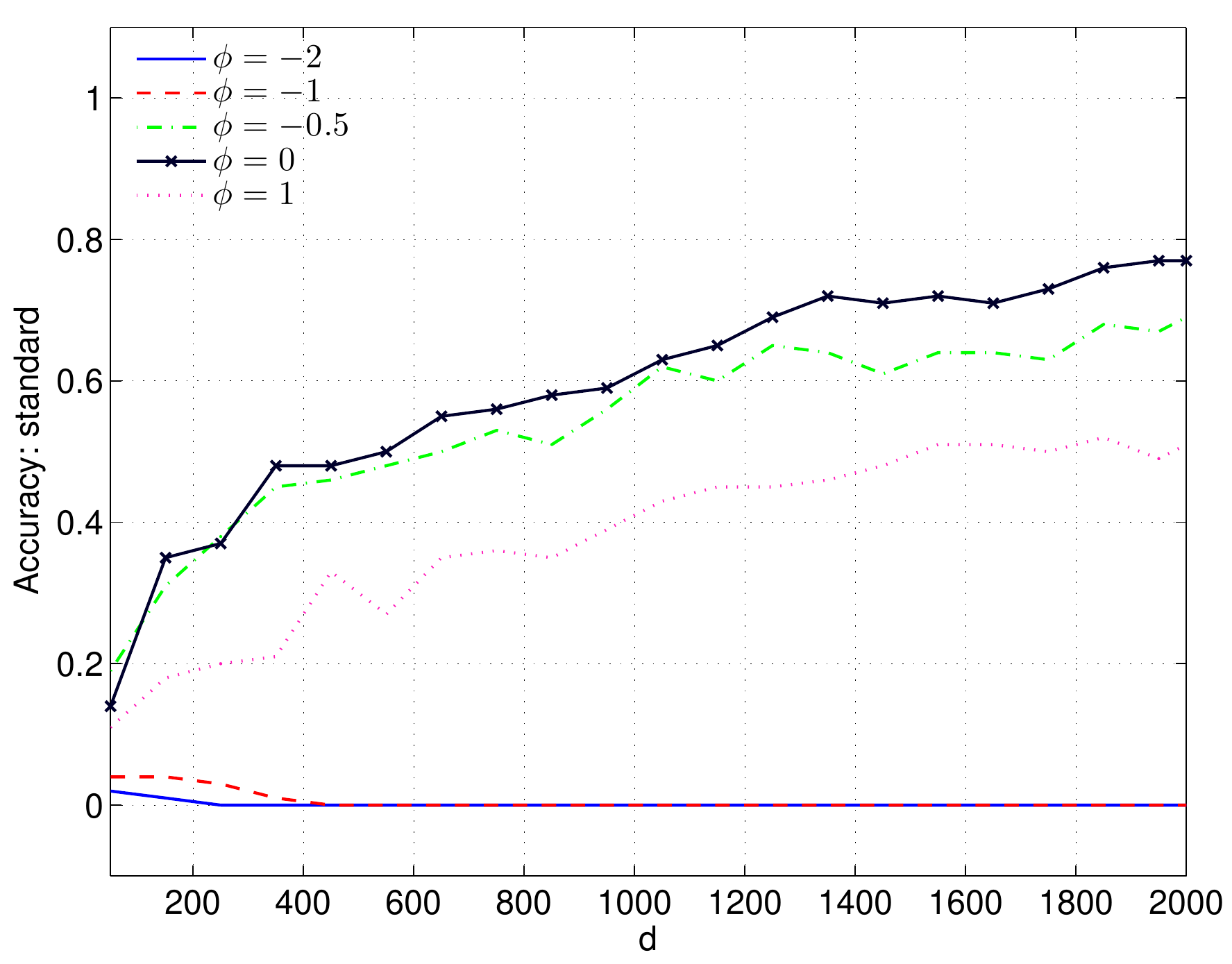}
}%
\hspace{40pt}%
\subfloat[][]{%
\includegraphics[width=0.4\textwidth, trim = 5mm 0mm 5mm 5mm]{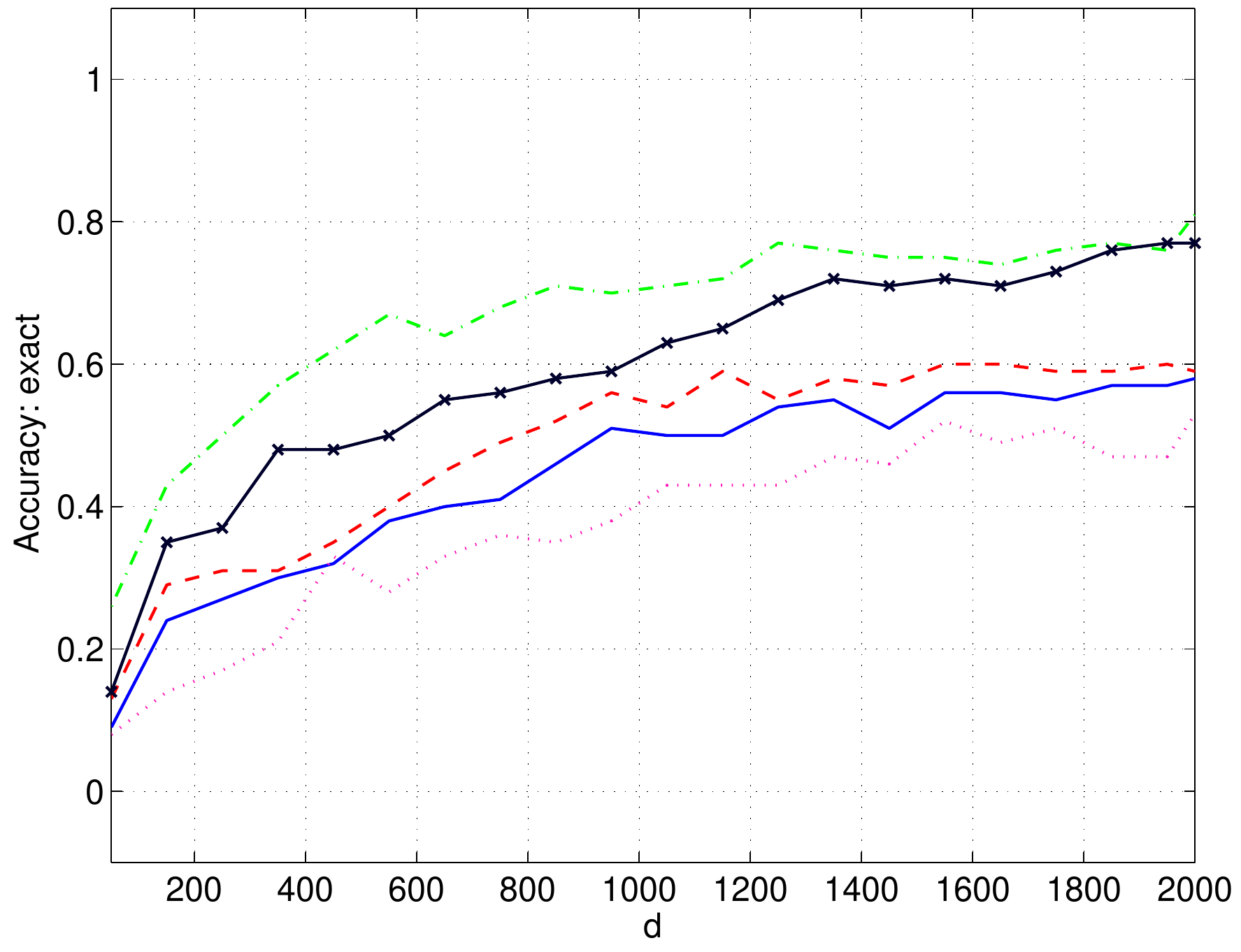}
}%
\caption{Random change locations ~$u+U_k$ ~that fluctuate around ~$u=70$. The means are ~$m_{j,k}=0$ ~for ~$j\leq u+U_k$ ~and ~$m_{j,k}=1$ ~for ~$j>u+U_k$ ~for all ~$k$.}
\label{fig:vary}%
\end{figure}

\subsection{Segmentation in the multiple change point scenario} 
In this subsection we assume multiple change points and compare the standard weighting scheme with the exact one using the denoising approach. First, we discuss {\it epidemic} changes, i.e. we have two change points ~$u_1$ ~and  ~$u_2$ ~where the means are temporarily shifted after ~$u_1$ ~but return to their former states after ~$u_2$.

We performed various simulations for different change point locations ~$u_1$ ~and ~$u_2$, moving average parameters ~$\phi$ ~and ~$\theta$ ~and for different variances ~$\tilde{\sigma}^2$ ~where we restricted our considerations to a simplified setting with the same magnitude of changes in all panels. We observed that the exact weighting tends to be beneficial in the sense that the overall picture improves. This is demonstrated in Figure \ref{fig:multiple_setting_epidemic}: With the exact weighting the accuracy for ~$\phi< -1$ ~increases considerably whereas for ~$\phi\geq -1$ ~it only decreases slightly. The curves are obtained using the fast LARS method but the group fused LASSO yields similar results.

In general multiple change point settings the situation is less clear than in the single change point or epidemic settings and the behaviour is rather erratic: The results strongly depend on the location and on the magnitude of the changes as well as on the moving average parameters - it is possible to find settings where the exact weighting scheme outperforms the standard one but also vice versa.

\subsection{Post processing estimated exact weights using regression} 

Here, our interest is again in the single change point scenario using ~$\hat{u}_{\star}$ ~with the simple weighting estimate ~$\hat{w}(i)=\hat{w}^{\text{exact}}(i,n)$ ~which is based on \eqref{eq:natural_estimate}.
We would like to mention a possible consistent modification which tends to be beneficial in situations described below and that may serve as a motivation for further research.  In the following we assume that ~$w^{\text{exact}}$ ~is strictly convex since the strictly concave case can be treated analogously.

Based on the results and the corresponding proofs of Section \ref{sec:segmentation} it seems reasonable to expect that, if ~$w^{\text{exact}}$ ~is strictly convex and smooth, which is the case for panels based on Example \ref{example:iid} or on Example \ref{example:MA1} (with e.g. $\phi\geq 0$), then modifications of ~$\hat{w}$ ~that are strictly convex, and therefore also less oscillating, should increase the precision of the resulting change point estimate. Obviously, the estimate ~$\hat{w}$ ~is usually not strictly convex due to the fluctuations around the strictly convex (discrete) function ~$w$. To obtain a smoother convex estimate ~$\hat{w}^{\text{exact-reg}}=\tilde{w}$, one may post-process the weights ~$\hat{w}$ ~using the well known least squares convex regression. The basic principle is that, given a regression model
\[
	\hat{w}(i)=w(i) + \varepsilon_i
\]
with a strictly convex function ~$w(i)$ ~and some centered noise sequence ~$\{\varepsilon_i\}$ ~for ~$i=1,\ldots,n-1$, we solve
\[
	\underset{\tilde{w}(i), g_i}{\text{Minimize}}\;  \sum_{i=1}^{n-1}\Big[\tilde{w}(i) - \hat{w}(i)\Big]^2,
\]
under the convexity restrictions
\begin{equation}\label{eq:convrestr}
	\tilde{w}(j)\geq \tilde{w}(i) + g_i (j-i)
\end{equation}
where  \eqref{eq:convrestr} holds true for ~$i,j=1,\ldots,n-1$ (cf., e.g., \citet{boyd2004} and \citet{hannah2013}). Clearly, in our situation, these estimates ~$\tilde{w}(i)$ ~remain consistent for ~$d\rightarrow\infty$ ~if the underlying original estimates ~$\hat{w}(i)$ ~were consistent and therefore strictly convex with probability tending to $1$, as ~$d\rightarrow\infty$.

In our simulations\footnote{The computation of the regression weights is carried out using the MATLAB software ``CVX: A system for disciplined convex programming'' (see \url{http://cvxr.com/cvx/}).} with moving average panels we observe that the weighting schemes ~$\hat{w}^{\text{exact-reg}}$ ~and ~$w^{\text{exact}}$ ~tend to yield similar estimation results  for ~$\hat{u}_\star$. They outperform ~$\hat{w}^{\text{exact}}$ ~for smaller variances ~$\tilde{\sigma}^2$, smaller panel numbers ~$d$ ~and for parameters ~$\phi$ ~that are closer to ~$-1$ ~(cf. Figure \ref{fig:regression_tildesigma}). This effect is stronger for change points ~$u$ ~closer to ~$n/2$ ~and weaker for ~$u$ ~closer to ~$n$. Notice that the estimate ~$\hat{w}^{\text{exact}}$ ~outperforms the ``true'' ~$w^{\text{exact}}$ ~for larger variances and larger panel numbers ~$d$. This goes in line with the observations of Subsection \ref{sec:segmentation_dependence}.

\begin{figure}[H]
\captionsetup[subfigure]{labelformat=empty}
\centering 
\subfloat[][]{%
\includegraphics[width=0.45\textwidth, trim = 5mm 0mm 5mm 5mm]{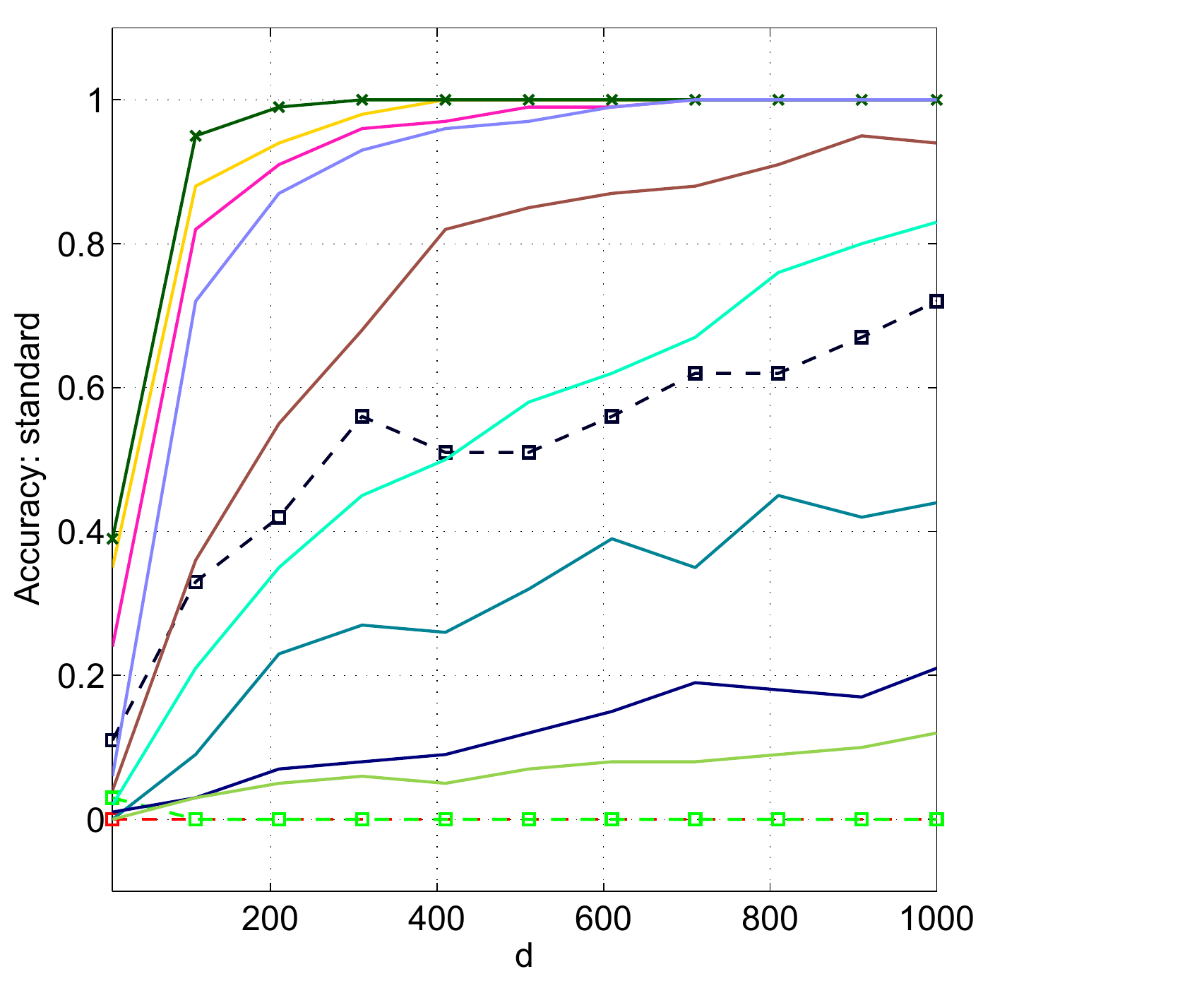}
}%
\subfloat[][]{%
\includegraphics[width=0.45\textwidth, trim = 5mm 0mm 5mm 5mm]{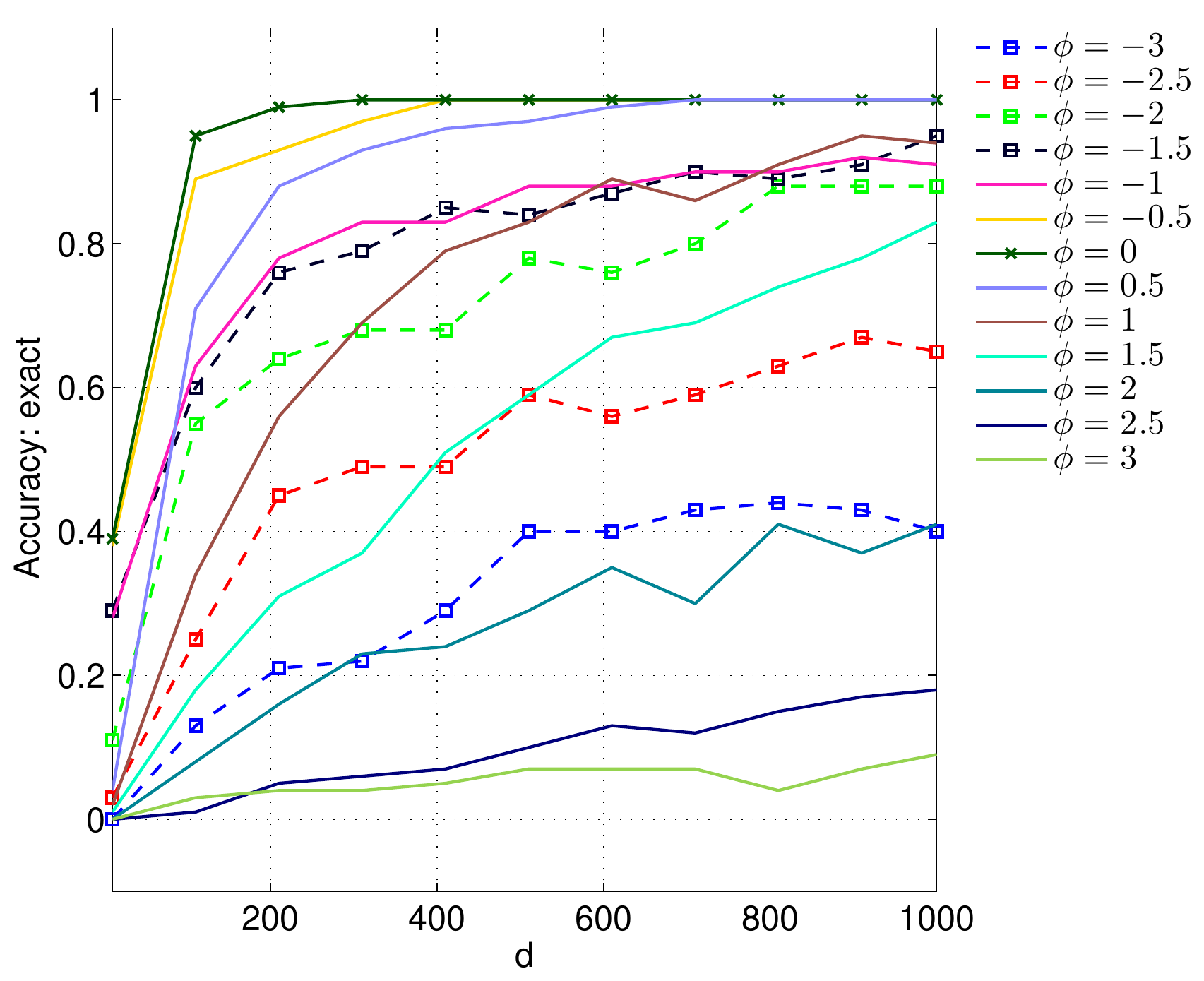}
}%
\caption{Epidemic change with change points at ~$\{55,80\}$ ~and a jump of size ~$+1$ ~in all panels. We consider ~$\tilde{\sigma}^2=1$ ~with ~$\theta=0$ ~and we do not take common factors into account, i.e. we set ~$\gamma_k=0$. {\it Accuracy} denotes now the probability that all change points are estimated correctly. Notice that the curves for ~$\phi=-2,-3.5$ ~are both zero in the left figure.}%
\label{fig:multiple_setting_epidemic}%
\end{figure}

\begin{figure}[H] 
\centering
\subfloat[][$d=10$, $\phi=-0.3$]{%
\includegraphics[width=0.4\textwidth, trim = 5mm 0mm 5mm 5mm]{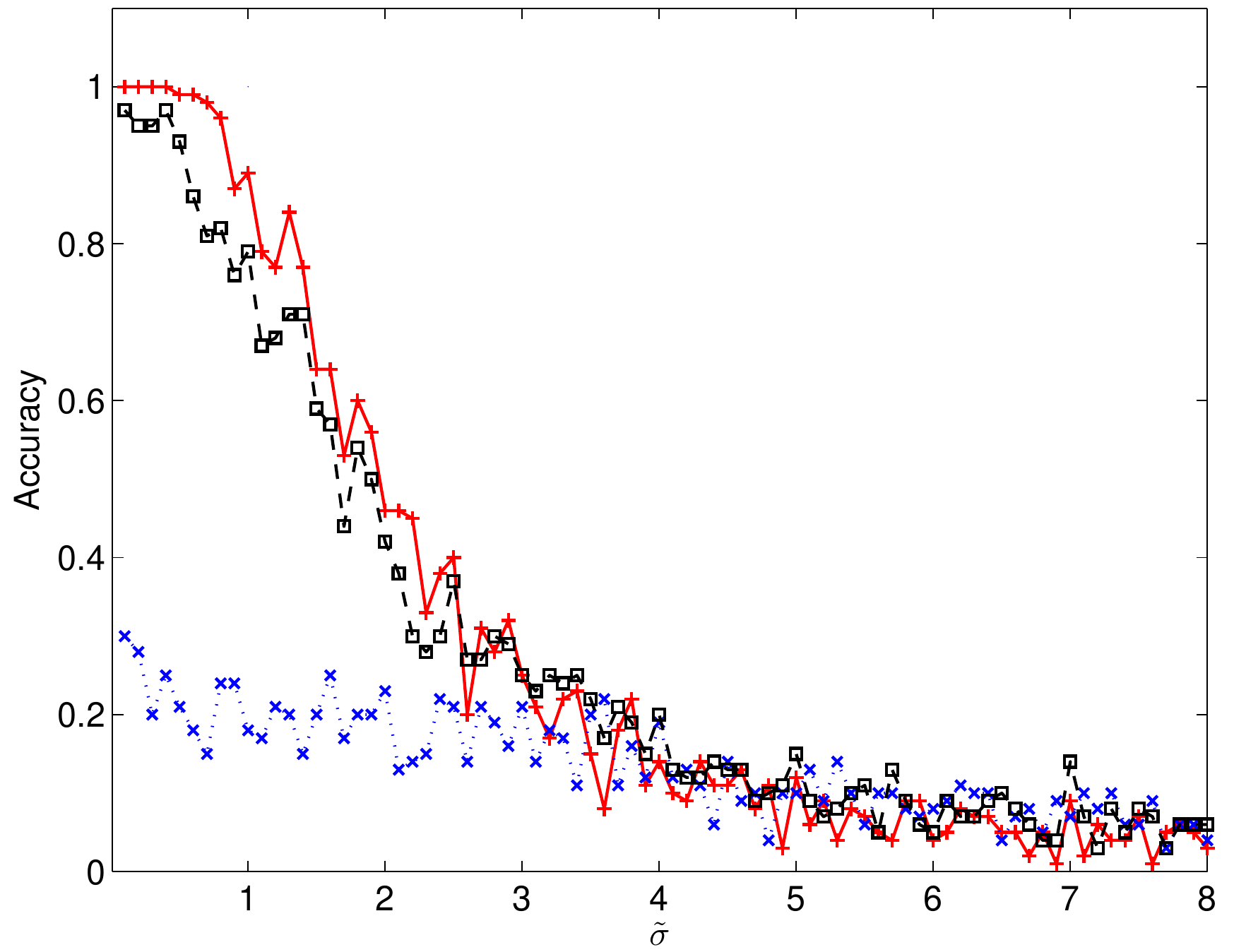}
}%
\hspace{40pt}%
\subfloat[][$d=200$, $\phi=-0.3$]{%
\includegraphics[width=0.4\textwidth, trim = 5mm 0mm 5mm 5mm]{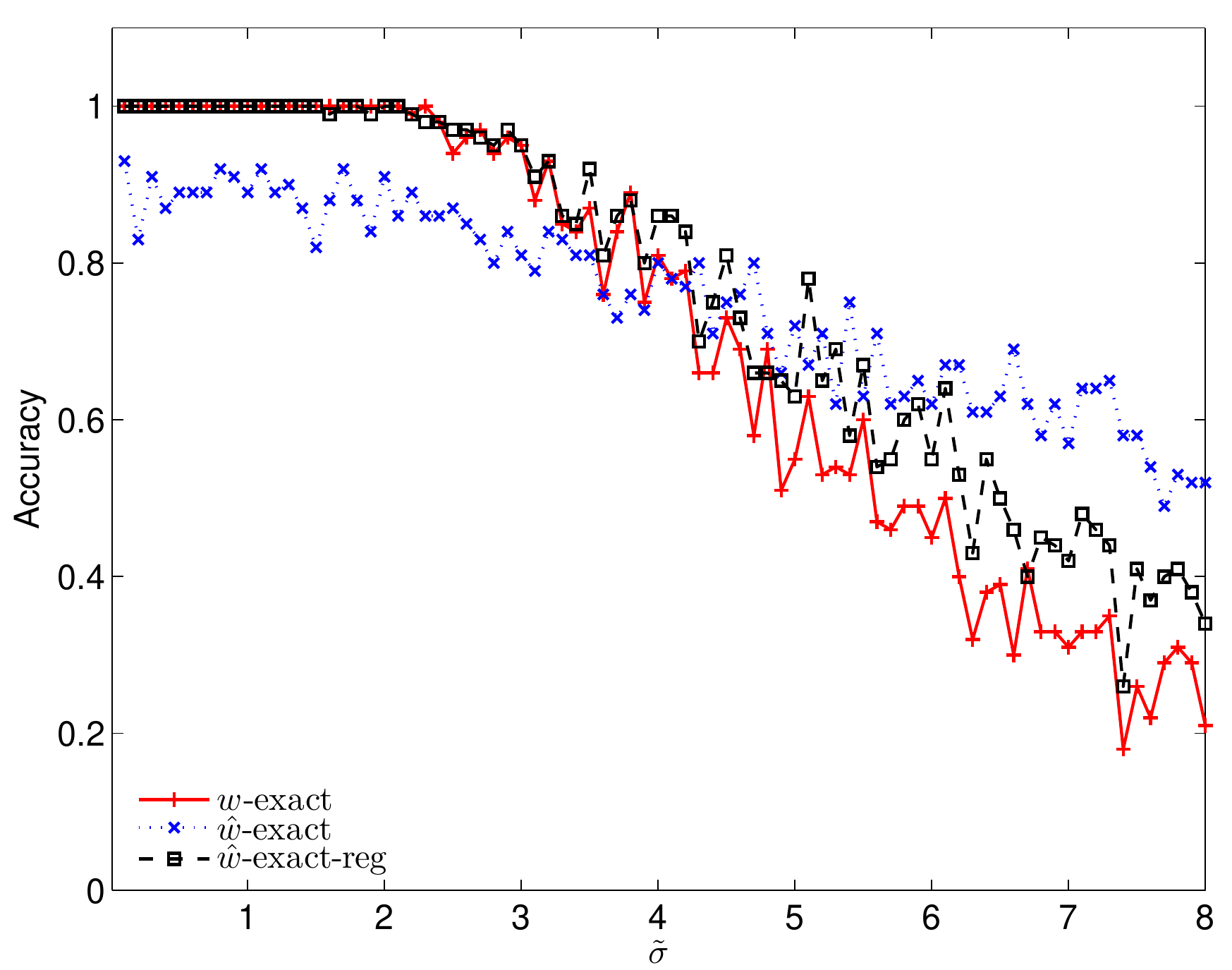}
}%
\\ 
\subfloat[][$d=10$, $\phi=0.5$]{%
\includegraphics[width=0.4\textwidth, trim = 5mm 0mm 5mm 5mm]{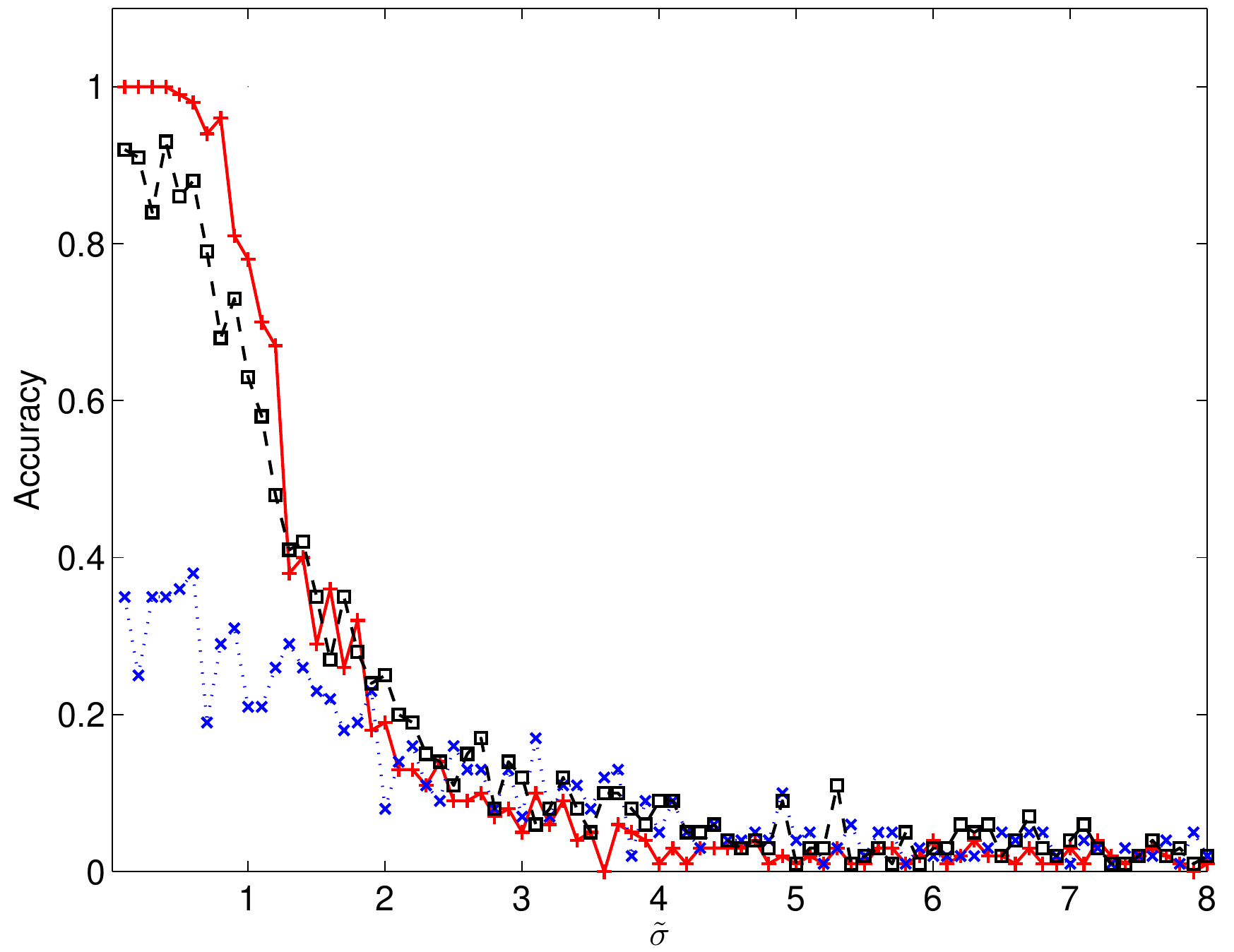}
}%
\hspace{40pt}%
\subfloat[][$d=200$, $\phi=0.5$]{%
\includegraphics[width=0.4\textwidth, trim = 5mm 0mm 5mm 5mm]{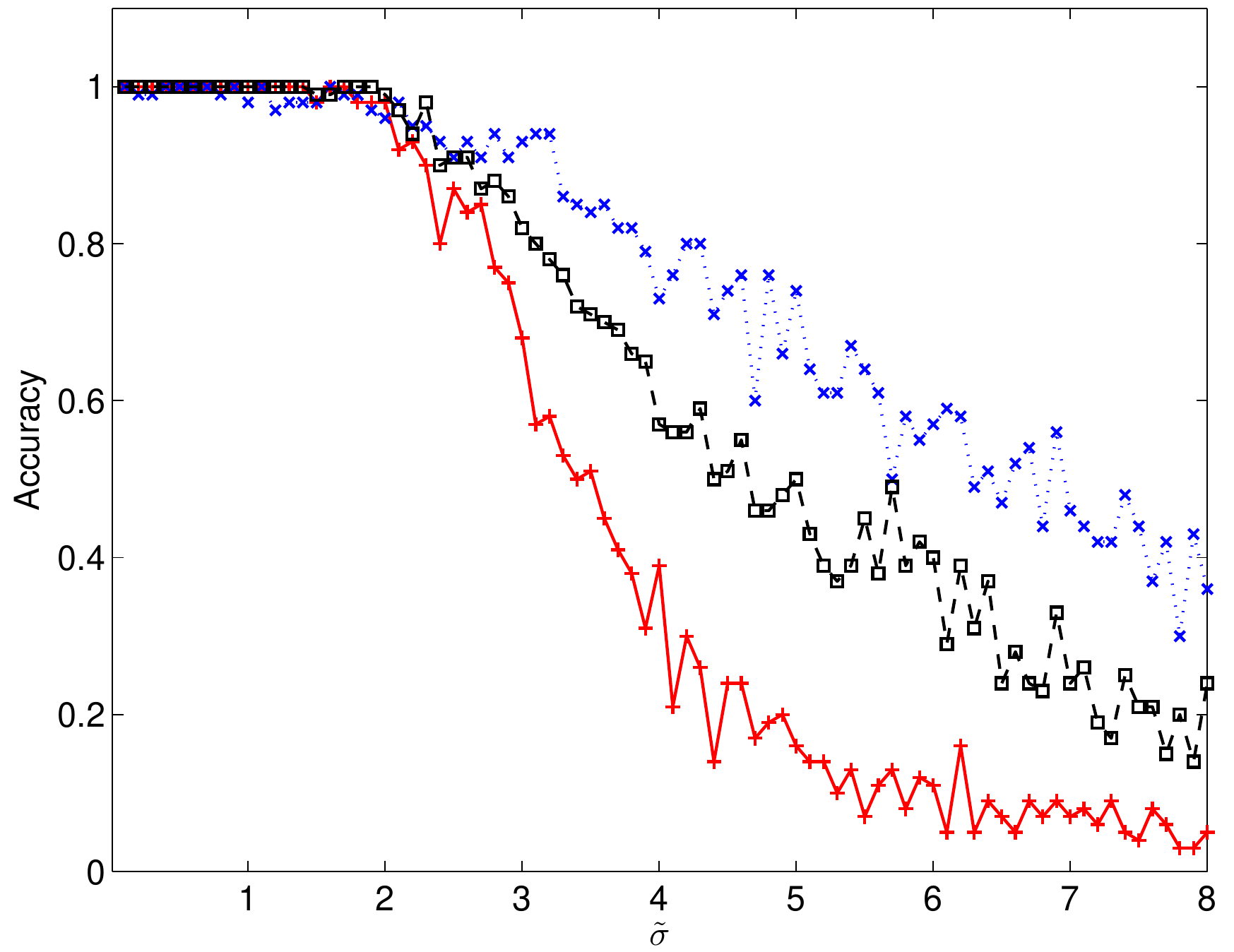}
}%
 \caption{We consider panels of length ~$n=50$ ~with a change point at ~$u=37$ ~and with ~$\theta=0$. We do not take common factors into account, i.e. we set ~$\gamma_k=0$.}

\label{fig:regression_tildesigma}%
\end{figure} 

\section{Conclusion}\label{sec:conclusion}
In this article we showed the connection of the total variation denoising approach of \cite{vert2011a} to the classical weighted CUSUM estimates. We generalized the consistency results of \cite{vert2011a} to panels of time series in the fixed ~$n$ ~and ~$d\rightarrow\infty$ ~setting under mild assumptions and studied consistency properties with respect to a well-known class of weighting schemes. Doing so, we also defined the criterion of {\it perfect estimation}, which is fulfilled in the independent setting if one uses the standard weighting, and showed that generally only a suitable covariance-dependent modification of these weights ensures this criterion. Thus, corresponding estimates outperform the standard weighting in various situations. We discussed appropriate estimation of these new weights and confirmed our results in a detailed simulation study. Moreover, we discussed the implications and possible advantages for the multiple change points and the random change point scenarios as well.

\newpage
\section{Proofs}\label{sec:proofs}
 
We start by proving Proposition \ref{prop:LASSO_CUSUM}, which requires some preliminary considerations and some key facts from  \citet{vert2011a}. For theoretical investigations and also for practical purposes the authors pick up the idea of \citet{leduc2008} and reformulate the minimization Problem \eqref{eq:totvar} as a {\it group fused} LASSO. 
\\
\\
Using the weights ~$w(i,n)$, they introduce a fixed design matrix ~$D\in\mR^{n\times (n-1)}$ ~with
\[
	D_{i,j}=
\begin{cases}
w(j,n),& i>j,\\
0, &\text{else}
\end{cases}
\] 
and set
\begin{align}
\begin{split}\label{eq:substitute} 
    \beta_{i,\bullet}&= \frac{U_{i+1,\bullet}-U_{i,\bullet}}{w(i,n)},
\end{split}
\end{align}
which can be compactly rewritten as ~$U= \mathds{1} U_{1,\bullet} + D\beta$ ~where ~$\mathds{1}=[1,\ldots,1]^T$. Thereby, the original problem \eqref{eq:totvar} transforms to
\begin{equation}\label{eq:gflasso}
\underset{\beta\in\mR^{(n-1)\times d}}{\text{Minimize}}\; \frac{1}{2}\|\bar{Y}-\bar{D}\beta\|^2_F + \lambda \sum_{i=1}^{n-1}\|\beta_{i,\bullet}\|_2
\end{equation}
with the same ~$\lambda\geq0$, where ~$\bar{Y}$ ~and ~$\bar{D}$ ~are the column-wise centered matrices ~$Y$ ~and ~$D$, respectively. Let ~$\hat{\beta}(\lambda)$ ~denote the solution of \eqref{eq:gflasso}. The solution ~$\hat{U}$ ~of \eqref{eq:totvar} can be recovered via  ~$\hat{U}=\mathds{1} \hat{\gamma} + D\hat{\beta}$ ~with ~$ \hat{\gamma} = \mathds{1}^T(Y-D\hat{\beta})/n$. 
The indices of non-zero rows of matrix ~$\hat{\beta}$ ~correspond to the change point set ~$\cE$ ~in \eqref{eq:set_changes} via \eqref{eq:substitute} and it holds that ~$\cE(\lambda)= \{u \;|\; \hat{\beta}_{u,\bullet}(\lambda)\neq0\}$. 
\\
\\
The crucial observation for further theoretical analysis is that ~$\beta$ ~minimizes \eqref{eq:gflasso}, for any fixed ~$\lambda$, if it fulfills the necessary and sufficient Karush-Kuhn-Tucker (KKT) conditions:
\begin{align}
\begin{split}\label{eq:KKT}
	  \phantom{\|}G_i\phantom{\|} = \lambda B_i&\qquad \forall \beta_{i,\bullet}\neq 0,\\
	  \|G_i\| \leq \lambda\phantom{B_i}&\qquad \forall \beta_{i,\bullet}  = 0,
\end{split}
\end{align}
for all ~$i=1,\ldots,n-1$ ~with vectors ~$G_i=\bar{D}^T_{\bullet,i}\left(\bar{Y}-\bar{D}\beta\right)$ ~and ~$B_i=\beta_{i,\bullet}/\|\beta_{i,\bullet}\|$.
\\
\\
The next proposition formalizes the selection of the regularization parameter ~$\lambda$ ~which was already informally described in the Subsection \ref{sec:single_change_point}. 
\begin{proposition}\label{prop:selection_procedure}
Consider the random matrix ~$\hat{c}=\bar{D}^T\bar{Y}$, set ~$t_i=\|\hat{c}_{i,\bullet}\|$ ~for ~$i=1,\ldots,n-1$. Assume ~$t_{i_1}\leq t_{i_2}\leq \ldots\leq t_{i_{n-1}}$ ~with ~$i_k\neq i_r$ ~for ~$k\neq r$ ~and set ~$M={i_{n-1}}$, $m={i_{n-2}}$. Under Assumption \ref{ass:amoc} it holds that:
\begin{enumerate}
\item If ~$\lambda\geq t_{M}$ ~then ~$\beta=0$ ~solves the KKT system \eqref{eq:KKT}.
\item If ~$t_{m}<\lambda< t_{M}$ ~then a random ~$\lambda_{\min}$ ~exist such that for any ~$\lambda_{\min}<\lambda< t_{M}$ ~the ~$\beta$ ~with rows
\begin{equation}\label{eq:def_beta}
	\beta_{i,\bullet}=
\alpha
\begin{cases}
\hat{c}_{M,\bullet}, & i=M,\\
0, & i\neq M,
\end{cases}
\end{equation}
and ~$\alpha=(t_{M}-\lambda)/(\bar{D}^T_{\bullet,M}\bar{D}_{\bullet,M}t_{M})$ ~solves the KKT system \eqref{eq:KKT}.  
\end{enumerate} 
In the latter case we obtain ~$\cE(\lambda)=\{M\}$, i.e. ~$\hat{u}=M$.
\end{proposition}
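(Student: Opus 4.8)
The plan is to verify directly that the two candidate matrices $\beta$ satisfy the Karush--Kuhn--Tucker system \eqref{eq:KKT}; since the group fused LASSO objective in \eqref{eq:gflasso} is strictly convex in $\beta$, these conditions are both necessary and sufficient for optimality and the minimizer $\hat\beta(\lambda)$ is unique, so producing a KKT point pins down $\hat\beta(\lambda)$ exactly. For the strict convexity I would first record that $\bar D$ has full column rank $n-1$: the columns of $D$ have staircase supports and are linearly independent, and $\mathds 1$ does not lie in their span, so the column-centering projection is injective on the range of $D$; hence $\bar D^T\bar D\succ 0$ (in particular $\bar D^T_{\bullet,M}\bar D_{\bullet,M}>0$, so the scalar $\alpha$ in the statement is well defined). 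Once a candidate is shown to solve \eqref{eq:KKT}, the correspondence $\cE(\lambda)=\{u:\hat\beta_{u,\bullet}(\lambda)\neq 0\}$ recorded before the proposition yields the final claim $\cE(\lambda)=\{M\}$, i.e.\ $\hat u=M$, because in case~2 one has $\alpha>0$ and $\hat{c}_{M,\bullet}\neq 0$ (as $t_M>t_m\geq 0$), so exactly one row of the candidate is non-zero.

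For part~1 I would simply insert $\beta=0$: then $G_i=\bar D^T_{\bullet,i}\bar Y=\hat{c}_{i,\bullet}$ for every $i$, so $\|G_i\|=t_i\leq t_M\leq\lambda$, which is precisely the second line of \eqref{eq:KKT} for all (zero) rows; hence $\beta=0$ is the unique minimizer.

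For part~2, with $\beta$ as in \eqref{eq:def_beta} one has $\bar D\beta=\alpha\,\bar D_{\bullet,M}\hat{c}_{M,\bullet}$, whence
\[
  G_i=\hat{c}_{i,\bullet}-\alpha\,(\bar D^T_{\bullet,i}\bar D_{\bullet,M})\,\hat{c}_{M,\bullet},\qquad i=1,\dots,n-1.
\]
Taking $i=M$ and substituting $\alpha=(t_M-\lambda)/(\bar D^T_{\bullet,M}\bar D_{\bullet,M}\,t_M)$ gives $\alpha\,\bar D^T_{\bullet,M}\bar D_{\bullet,M}=(t_M-\lambda)/t_M$, hence $G_M=(\lambda/t_M)\hat{c}_{M,\bullet}$; thus $\|G_M\|=\lambda$ and $G_M=\lambda B_M$ with $B_M=\hat{c}_{M,\bullet}/t_M=\beta_{M,\bullet}/\|\beta_{M,\bullet}\|$, the first line of \eqref{eq:KKT}. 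For $i\neq M$ one must check $\|G_i\|\leq\lambda$. Here I would note that $G_i=G_i(\lambda)$ is affine in $\lambda$, so $\psi_i(\lambda):=\lambda^2-\|G_i(\lambda)\|^2$ is a real polynomial of degree at most two with $\psi_i(t_M)=t_M^2-t_i^2\geq t_M^2-t_m^2>0$, using $t_i\leq t_m<t_M$ (the content of the hypothesis $t_m<\lambda<t_M$). By continuity each $\psi_i$ stays positive on a half-open interval $(\lambda_i^\ast,t_M)$; setting $\lambda_{\min}:=\max\{0,\max_{i\neq M}\lambda_i^\ast\}<t_M$ then makes $\|G_i(\lambda)\|<\lambda$ for every $i\neq M$ and every $\lambda\in(\lambda_{\min},t_M)$, i.e.\ the second line of \eqref{eq:KKT}. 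Combining the two cases, the candidate solves \eqref{eq:KKT} on $(\lambda_{\min},t_M)$, and by uniqueness together with the remarks above this finishes the proof, including $\cE(\lambda)=\{M\}$.

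The only point requiring a little care is the construction of the (random) threshold $\lambda_{\min}$: one needs $\|G_i\|\leq\lambda$ to hold simultaneously for \emph{all} $i\neq M$ on one common interval abutting $t_M$, which is why I route the argument through the functions $\psi_i$ and their strict positivity at $\lambda=t_M$ — and this is the one place where the separation $t_m<t_M$ actually enters. Everything else is a direct substitution into the KKT system.
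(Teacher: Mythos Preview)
Your argument is correct and follows essentially the same route as the paper: verify the KKT conditions directly for the two candidates, checking the active row $i=M$ by substitution and the inactive rows $i\neq M$ by a continuity argument at $\lambda=t_M$ (where $\alpha=0$ and hence $\|G_i\|=t_i<t_M$). Your presentation is in fact slightly more explicit than the paper's---you spell out the affine/quadratic structure of $G_i$ and $\psi_i$ and add the strict-convexity/uniqueness remark---but the underlying proof is the same.
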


\begin{proof}[Proof of Proposition \ref{prop:selection_procedure}]
For ~$\beta=0$ ~the conditions of \eqref{eq:KKT} simplify to ~$\lambda\geq t_{M}$ ~and the first statement follows immediately. We turn to the second statement where ~$t_{m}<\lambda< t_{M}$ ~and in which case
\[
	\bar{D}^T_{\bullet,M}\bar{D}\beta=\bar{D}^T_{\bullet,M}\bar{D}_{\bullet,M}\beta_{M,\bullet}.
\]
Therefore, the first equality in \eqref{eq:KKT} translates to
\begin{align}\label{eq:intersteps}
\begin{split}
	\hat{c}_{M,\bullet} &= \bigg(\frac{\lambda}{\|\beta_{M,\bullet}\|} +  \bar{D}^T_{\bullet,M}\bar{D}_{\bullet,M} \bigg)\beta_{M,\bullet}\\
&=  \bigg(\frac{\lambda}{(t_M-\lambda)/(\bar{D}^T_{\bullet,M}\bar{D}_{\bullet,M})} +  \bar{D}^T_{\bullet,M}\bar{D}_{\bullet,M} \bigg)\beta_{M,\bullet}
\end{split}
\end{align}
which is fulfilled by the definition of ~$\beta_{M,\bullet}$.
Here, the latter equality in \eqref{eq:intersteps} holds true since the former equality in \eqref{eq:intersteps} implies 
\[
	t_M = \|\hat{c}_{M,\bullet}\| = \lambda  + \bar{D}^T_{\bullet,M}\bar{D}_{\bullet,M}\|\beta_{M,\bullet}\|.
\]
For ~$\lambda\uparrow  t_{M}$ ~we have ~$\beta\rightarrow 0$ ~and therefore ~$\|G_{i}\|\rightarrow t_{i}<t_M$ ~for ~$i\neq M$ ~which yields the assertion.
\end{proof} 
We are now in the position to provide the short proof for Proposition \ref{prop:LASSO_CUSUM}.
\begin{proof}[Proof of Proposition \ref{prop:LASSO_CUSUM}]
Straightforward calculations yield
\[
	\left(\bar{D}^T\bar{Y}\right)_{i,k} = -w(i,n)\sum_{j=1}^i (Y_{j,k}-\bar{Y}_{n,k})
\] 
for all ~$i=1,\ldots,n-1$ ~and ~$k=1,\ldots,d$. Hence, we obtain ~$t_i^2=\|\hat{c}_{i,\bullet}\|^2=t(i)$. Since ~$t(i)$ ~has a unique maximum, we know that an appropriate ~$\lambda$ ~with ~$t_m<\lambda< t_{M}$ ~may be selected which, together with Proposition \ref{prop:selection_procedure}, completes the proof.
\end{proof}
We continue with the proofs for Subsection \ref{sec:theoretical}.

\begin{proof}[Proof of Example \ref{example:MA1}]
According to \eqref{eq:partial_sum_rewritten} and due to independence we have
\begin{align*}
	\Var(S_{i,k}(\varepsilon)) &=\Var \left(\sum_{j=1}^n a_{i,j} \left(\eta_{j,k}+\phi\eta_{j-1,k}\right)\right)\\
&+\theta^2\Var \left(\sum_{j=1}^n a_{i,j} \left(\eta_{j,k-1}+\phi\eta_{j-1,k-1}\right)\right).
\end{align*}
Straightforward calculations yield  
\begin{align*}
&\frac{n}{\tilde{\sigma}^2}\Var \left(\sum_{j=1}^n a_{i,j} \left(\eta_{j,k}+\phi\eta_{j-1,k}\right)\right)\\
&=\left(1-i/n\right)^2\left[i(1+\phi^2)+2(i-1)\phi\right]\\
	&\quad+ \left(i/n\right)^2\left[(n-i)(1+\phi^2)+2((n-i)-1)\phi\right]\\ 
	&\quad- 2\left(1-i/n\right)\left(i/n\right)\phi\\
&=(1+\phi^2+2\phi)\left[\left(1-i/n\right)^2i + \left(i/n\right)^2(n-i)\right]\\
&\quad-2\phi\left[\left(1-i/n\right)^2+\left(i/n\right)^2+\left(1-i/n\right)\left(i/n\right)\right]\\
&=\alpha(\phi)\left(i\left(1-i/n\right)\right)-2\phi,
\end{align*}
where ~$\alpha(\phi)$ ~is set in \eqref{eq:alpha_and_sigma} and this implies
\[
		V^2(i)\xi =\alpha(\phi)\left((i/n)\left(1-i/n\right)\right)-2\phi/n
\]
with ~$\xi=\sigma^2/(\tilde{\sigma}^2(1+\theta^2))$. 
\end{proof}

\begin{proof}[Proof of Theorem \ref{thm:convergence_2max}] Using the notation of \eqref{eq:def_tdpartial} we consider
\begin{align*}
	S_{i,k}(Y)&=n^{-1/2}\sum_{j=1}^i (Y_{j,k} - \bar{Y}_{n,k})\\
				&=n^{-1/2}\sum_{j=1}^i (\varepsilon_{j,k} - \bar{\varepsilon}_{n,k})+\gamma_k n^{-1/2}\sum_{j=1}^i (\zeta_{j} - \bar{\zeta}_{n})-n^{1/2}H(i,u)\Delta_k\\
				&=S_{i,k}(\varepsilon)+\gamma_k S_{i}(\zeta)+C_k,
\end{align*}
with non-random ~$C_k=-n^{1/2}H(i,u)\Delta_k$ ~and where ~$S_{i}(\zeta)=n^{-1/2}\sum_{j=1}^i (\zeta_{j} - \bar{\zeta}_{n})$. It holds that
\begin{align*}
	S_{i,k}(Y)^2 &= | S_{i,k}(\varepsilon)+\gamma_k S_{i}(\zeta)|^2 +2 C_k (S_{i,k}(\varepsilon)+\gamma_k S_{i}(\zeta))   + C_k^2
\end{align*} 
and together with the independence of the centered ~$S_{i,k}(\varepsilon)$ ~and ~$S_{i}(\zeta)$ ~we get
\begin{align*}
	E|S_{i,k}(Y)|^2&=E|S_{i,k}(\varepsilon)|^2+\gamma^2_k E|S_{i}(\zeta)|^2 + C_k^2.
\end{align*}  
Due to part 2 of Assumption \ref{ass:common_structure} and due to Assumption \ref{ass:common_factors} it holds that ~$\sum_{k=1}^d E|S_{i,k}(\varepsilon)|^2/d=V^2(i)\sigma^2$ ~and that ~$\sum_{k=1}^d\gamma_k^2/d=o(1)$, as ~$d\rightarrow\infty$. Therefore, we get that, as ~$d\rightarrow \infty$,
\begin{align}\label{eq:asymptotic_expectation}
	E\bigg(\frac{1}{d}\sum_{k=1}^d |S_{i,k}(Y)|^2\bigg)&= V^2(i)\sigma^2+nH^2(i,u)\tilde{\Delta}^2_d + o(1)
\end{align}
with ~$\tilde{\Delta}^2_d=\sum_{k=1}^d\Delta^2_k/d$ ~and for each ~$i=1,\ldots,n-1$. Now, assume that we already know that 
\begin{align}\label{eq:remaining_variation}
\begin{split}
	\Var\bigg(\frac{1}{d}\sum_{k=1}^d |S_{i,k}(Y)|^2\bigg)&=o(1)
\end{split}
\end{align}
for any ~$i$. Then, via Chebyshev's inequality, we obtain from \eqref{eq:asymptotic_expectation} and \eqref{eq:remaining_variation} that,  as ~$d\rightarrow \infty$, 
\[
	\frac{1}{n^2\tilde{\Delta}^2_d}\frac{t(i)}{d}=\frac{w^2(i,n)}{n\tilde{\Delta}^2_d}\frac{1}{d}\sum_{k=1}^d |S_{i,k}(Y)|^2\stackrel{P}{\longrightarrow} C(i;u,n,r)
\]
for each ~$i=1,\ldots,n-1$. Due to the continuous mapping theorem we obtain
\[
	\frac{1}{n^2\tilde{\Delta}^2_d}\frac{\max_{i\not\in S}t(i)}{d}\stackrel{P}{\longrightarrow} \max_{i\not\in S} C(i;u,n,r), 
\]
where ~$S$ ~is set according to \eqref{eq:perfect_estimation_not_the_definition} and which then completes the proof. (We neglect the trivial case of $S^c=\emptyset$.)

It remains to show that \eqref{eq:remaining_variation} holds true for any ~$i$ ~indeed. It is sufficient to show that all terms
\begin{align}
	 &\Var\Big(\sum_{k=1}^d S_{i,k}^2(\varepsilon)\Big),\label{eq:a}\\
	 &\Var\Big(S_{i}(\zeta)\sum_{k=1}^d \gamma_k S_{i,k}(\varepsilon)\Big)=E(S^2_{i}(\zeta))\sum_{k,r=1}^d \gamma_k  \gamma_r E(S_{i,k}(\varepsilon)S_{i,r}(\varepsilon)),\label{eq:b}\\
	 &\Var\Big(S_{i}^2(\zeta)\sum_{k=1}^d \gamma^2_k\Big)=\Big(\sum_{k=1}^d \gamma^2_k\Big)^2 \Var\Big(S_{i}^2(\zeta)\Big),\label{eq:c}\\
	 &\Var\Big(\sum_{k=1}^d C_k S_{i,k}(\varepsilon)\Big)=\sum_{k,r=1}^d C_k C_r E\Big(S_{i,k}(\varepsilon)S_{i,r}(\varepsilon)\Big),\label{eq:d}\\
	 &\Var\Big(S_{i}(\zeta)\sum_{k=1}^d \gamma_k C_k\Big)= \left(\sum_{k=1}^d \gamma_k C_k\right)^2 \Var\Big(S_{i}(\zeta)\Big)\label{eq:e}
\end{align} 
are of order ~$o(d^2)$ ~because the mixed covariance terms can be neglected due to the Cauchy-Schwarz inequality. The fourth moments of any ~$\zeta_i$ ~are finite, hence ~$E(S^2_{i}(\zeta))$ ~and ~$\Var(S^2_{i}(\zeta))$ ~are finite too. The terms \eqref{eq:a}, \eqref{eq:b} and \eqref{eq:d} are of order ~$o(d^2)$ ~which follows from Assumptions \eqref{eq:condition2} and \eqref{eq:condition1} if we take 
\begin{align*}
	\Var\Big(\sum_{k=1}^d S_{i,k}^2(\varepsilon)\Big)&=\sum_{j,l,q,m=1}^n a_{i,j} a_{i,q} a_{i,l} a_{i,m}\sum_{k,r=1}^d \Cov (\varepsilon_{j,k}\varepsilon_{q,k},\varepsilon_{l,r}\varepsilon_{m,r})\\ 
\intertext{and}
	E\Big(S_{i,k}(\varepsilon)S_{i,r}(\varepsilon)\Big)&= \sum_{j,l=1}^n a_{i,j} a_{i,l}\Cov (\varepsilon_{j,k},\varepsilon_{l,r}),
\end{align*}
with ~$a_{i,j}$ ~defined in \eqref{eq:def_aij}, into account. The term in \eqref{eq:c} is of order ~$o(d^2)$ ~in view of \eqref{eq:condition_gammas} and similarly \eqref{eq:e} is of order ~$o(d^2)$ ~via the Cauchy-Schwarz inequality and again due to \eqref{eq:condition_gammas}.
\end{proof}

\begin{proof}[Proof of Theorem \ref{thm:charct_eq}] 
The critical function ~$C(\cdot;u,n,r)$ ~is positive. It has generally a maximum at ~$i=u$ ~for all ratios ~$r>0$ ~if and only if
\begin{equation}\label{eq:basic_inequality}
	1 \leq \frac{C(u;u,n,r)}{C(i;u,n,r)} 
\end{equation}
holds true for all ~$r>0$, ~$i=1,\ldots,n-1$. This can only be fulfilled if
\begin{equation}\label{eq:condition_basicC}
	1\leq\lim_{r\rightarrow\infty}\frac{C(u;u,n,r)}{C(i;u,n,r)} = \left[\frac{w(u,n)V(u)}{w(i,n)V(i)}\right]^2
\end{equation}
holds true for every ~$i=1,\ldots,n-1$ ~and the positive weights ~$w(i,n)=\alpha/V(i)$ ~from \eqref{eq:charct_eq} fulfill these constraints.

If \eqref{eq:charct_eq} does not hold, then either ~$\alpha\leq 0$ ~and thus ~$w(i,n)$ ~is negative or ~$w(i,n)V(i)$ ~is not a constant function in ~$i$ ~and thus
\[
	1<\left[\frac{w(q,n)V(q)}{w(p,n)V(p)}\right]^2
\]
now holds true for some ~$q\neq p$. The former contradicts the positivity constraint and the latter contradicts condition \eqref{eq:condition_basicC}  for ~$u=p$, $i=q$ ~and therefore also \eqref{eq:basic_inequality} for some ~$r>0$. 
\end{proof}

\begin{proof}[Proof of Theorem \ref{thm:cond_v}]
It holds ~$w^2(i,n)V^2(i)\equiv1$. Hence, for any ~$r\geq 0$, a unique maximum of ~$C(i;u,n,r)$ ~is equivalent to
\begin{align*}
	0&<C(u;u,n,r)-C(i;u,n,r)\\
&=
\begin{cases}
w^2(u,n)(u/n)^2(1-u/n)^2-w^2(i,n)(i/n)^2(1-u/n)^2,&i<u,\\
w^2(u,n)(u/n)^2(1-u/n)^2-w^2(i,n)(u/n)^2(1-i/n)^2,&i>u
\end{cases}
\end{align*}
for all $i\neq u$. Due to the symmetry of ~$V(i)$ ~this is equivalent to 
\[
	\frac{V(n-i)}{V(n-u)}=\frac{V(i)}{V(u)}=\frac{w(u,n)}{w(i,n)}>
\begin{cases}
i/u,&i<u,\\
(n-i)/(n-u),&i>u
\end{cases}
\]
and the assertion follows.
\end{proof}

\begin{proof}[Proof of Lemma \ref{lem:concave_func}]
Set ~$f_u(i)=V(i)/V(u)$, $g_u(i)=i/u$ ~and observe that
\[
	f_u(u)=V(u)/V(u)=1=u/u=g_u(u). 
\]
Now, assume that ~$f_u(1)=V(1)/V(u)>1/u=g_u(1)$. Since ~$f_u(i)$ ~is strictly concave in ~$i$ ~and ~$g_u(i)$ ~is linear in ~$i$ ~the assertion follows immediately.
\end{proof}

\begin{proof}[Proof of Remark \ref{rem:verify_ma1}]  
We have to distinguish the two cases ~$\alpha(\phi)>0$ ~and ~$\alpha(\phi)\leq 0$.  In the first case ~$V(i)$ ~is strictly concave and we may use Lemma \ref{lem:concave_func}. Simple calculations show
\begin{align*}
	c\big(V^2(1)-V^2(u)/u^2\big) &= (un)^{-2}(u-1)((u+u\phi^2-2\phi)n+2\phi u)\\
&= (un)^{-2}(u-1)(\phi^2nu-2\phi(n-u)+nu)
\end{align*}
for some ~$c>0$ ~and it is easy to check that ~$V^2(1)-V^2(u)/u^2>0$ ~holds true (for ~$\phi\geq 0$ ~we check via the first equality and for ~$\phi< 0$ ~via the second equality).
The latter case, ~$\alpha(\phi)\leq 0$, occurs only for negative ~$\phi$ ~with 
\begin{equation}\label{eq:region_phi_convex}
	 -1-n^{-1}-(2n^{-1}+n^{-2})^{1/2}  \leq \phi \leq -1-n^{-1}+(2n^{-1}+n^{-2})^{1/2}. 
\end{equation}
In this case it is sufficient to check that 
\[
	h(t):=\Big(\alpha(\phi)\left((t/n)\left(1-t/n\right)\right)-2\phi/n\Big)/t^2
\] 
 is strictly decreasing on ~$t\in [1, n-1]$. Hence, ~$h(i)/\xi=V^2(i)/i^2$ ~with ~$\xi=\sigma^2/(\tilde{\sigma}^2(1+\theta^2))$ ~is strictly decreasing too. It holds that
\[
	n t^3\partial_t h(t)=-\alpha(\phi)t+4\phi
\] 
and a sufficient condition for ~$h(t)$ ~to be strictly decreasing is that ~$-\alpha(\phi)t+4\phi<0$ ~holds true. This condition is fulfilled for any ~$t\in[1,n-1]$ ~whenever it is fulfilled for ~$t=n$. The latter is equivalent to ~$n(\phi^2+2\phi+1)>2\phi$ ~which always holds true since ~$\phi<0$. Finally, ~$V(i)>0$ ~follows immediately from \eqref{eq:partial_sum_rewritten}. 
\end{proof}

\begin{proof}[Proof of Theorem \ref{thm:locationmaxiid}] 
We assume that ~$u> n/2$, i.e. ~$u^*=u$ ~and set ~$w=w^{\text{weighted}}$. (The case ~$u<n/2$ ~follows by symmetry.) We consider the case ~$i\leq u$ ~first and define 
\begin{align*}
	C(x,r)&=F(x)r+G(x)\\
\intertext{with}
	F(x)&=w^2(x,n)V^2(x),\\
	V^2(x)&=(x/n)(1-x/n),\\ 
G(x)&= w^2(x,n)((x/n)(1-u/n))^2
\end{align*}
on ~$[0,n)$, i.e. ~$C(x,r)=C(x;u,n,r)$ ~for ~$x\in\{1,\ldots,u\}$. For convenience we suppress the dependence on ~$u$ ~and ~$n$. It is easy to check that ~$G(x)$ ~is strictly increasing with ~$G(0)=0$, that ~$F(x)$ ~is strictly concave on ~$[0,n]$ ~and symmetrical with respect to ~$x=n/2$ ~and that ~$F(0)=0=F(n)$ ~holds true. Further, ~$C(x,r)$, as a function of ~$x$, has a unique maximum at ~$u$ ~if and only if ~$C(x,r)<C(u,r)$ ~for any ~$x\neq u$. Now, ~$C(x,r)<(>)C(y,r)$, ~$x<y$ ~and ~$n/2<y$ ~is equivalent to 
\[
	r  \begin{cases}
	<(>)R(y,x),& n-y<x<y, \\
	>(<)R(y,x),& 0<x<n-y
\end{cases}
\]
and simply ~$G(x)<(>)G(y)$ ~if ~$x=n-y$,
where
\begin{equation}\label{eq:ratios}
R(y,x)=-\frac{G(y)-G(x)}{F(y)-F(x)}
\begin{cases}
	>0,& n-y<x<y, \\
	<0,& 0<x<n-y.
\end{cases}
\end{equation}
The latter holds true because ~$G(x)$ ~is strictly increasing on ~$[0,y]$, i.e. ~$G(y)-G(x)>0$ ~is strictly decreasing in ~$x$, and because of the properties of ~$F$ ~described above. In the following we assume that ~$y>n/2$ ~and ~$x<y$. Since ~$G(x)<G(y)$ ~we know that ~$C(n-y,r)<C(y,r)$ ~for any ~$r$ ~and since ~$F$ ~is symmetric we also conclude that ~$R(y,x)>R(y,n-x)$ ~for ~$n-y<x<n/2$. Altogether, this implies that ~$C(i,r)<C(u,r)$ ~for all (discrete) ~$0<i<u$ ~and ~$r\geq 0$ ~if and only if 
\[
	0 \leq r< \cR=\min_{n/2\leq i<u}R(u,i).
\]  
The function ~$C(x;u,n,r)$ ~is obviously strictly decreasing for ~$n/2<u<x<n$ ~and the claim follows. 
\end{proof}

\begin{proof}[Proof of Theorem \ref{thm:representmaxiid}] 
We restrict our considerations to ~$u>n/2$. The case ~$u<n/2$ ~follows by symmetry. In \citet[Theorem 2]{vert2011a}, i.e. for ~$\gamma=0$, it is used that ~$C(i;u,n,r)$ ~has a global maximum at ~$u$ ~only if ~$C(u;u,n,r)>C(u-1;u,n,r)$. This does not hold true in the case of ~$\gamma\in(0,1/2)$ ~and a global maximum can differ from ~$u$ ~even though ~$C(u;u,n,r)>C(u-1;u,n,r)$ ~holds true. However, we will see that this situation cannot occur if ~$s\in (1/2+ 1/n,B(\gamma)]$. 
\\
\\  
We use the notation from the proof of Theorem \ref{thm:locationmaxiid}. As mentioned there, it is sufficient to consider the case ~$i\leq u$. In this case we know from the proof of Theorem \ref{thm:locationmaxiid} that a possible local maximum of ~$C(x,r)$ ~for ~$x\in[0,u]$ ~can only occur at some ~$x_{\max} \in [n/2,u]$. Moreover, using basic analysis we know that
\[
	\lim_{r\rightarrow \infty}\frac{C(x,r)}{C(y,r)}=\frac{F(x)}{F(y)}
\]
for any ~$0<x,y<n$. Due to strict concavity of ~$F$ ~we know that for any ~$0<\delta<1$ ~it holds that ~$F(n/2)>F(n/2\pm \delta)$. That is, for sufficiently large ~$r$, a local maximum of ~$C(x,r)$ ~occurs within ~$[n/2-\delta,n/2+\delta]\cap [n/2,u]=[n/2,n/2+\delta]$.
\\
\\
Now, we compute the rescaled first derivative of ~$C(x,r)$ ~on ~$[0,n)$ ~which will be denoted by
\begin{align*}
	P(x,r)&:=\frac{((x/n)(1-x/n))^{2\gamma+1}}{x}\partial_x C(x,r).
\end{align*}
$P(x,r)$ ~can be evaluated to a second order polynomial in ~$x$ ~and for ~$x\in(0,n)$ ~we know that ~$\partial_x C(x,r)=0$ ~if and only if ~$P(x,r)=0$. Furthermore, since ~$C(0,r)=0$ ~and ~$C(x,r)\rightarrow \infty$ ~as ~$x\uparrow n$  ~for any ~$r> 0$ ~we may have, in case of ~$\partial_x C(x,r)=0$, either only a saddle point or a maximum and a minimum must occur simultaneously at some ~$0<x_{\max}<x_{\min}<n$. We also know from previous considerations that ~$x_{\max} \geq n/2$.
\\
\\
The discriminant ~$D(r)$ ~of ~$P(x,r)$ ~is a second order polynomial in ~$r$ ~with roots
\[
	r_{1,2}=\frac{-(2\gamma+2)\pm 4  \gamma^{1/2}}{2\gamma -1}(1-u/n)^2.
\]
$D(r)$, as a second order polynomial, must be positive for ~$r>r_2$, where ~$r_2\geq r_1$. Recall that, ~$C(x,r)$ ~has a local maximum within ~$[n/2,n/2+\delta)$ ~for any ~$0<\delta<1$ ~and all sufficiently large ~$r$. Otherwise, ~$D(r)$ ~would be negative for ~$r>r_2$ ~and we would have no extrema of ~$C(x,r)$ ~in case of large ~$r$. 
\\
\\ 
The solution of ~$P(x,r_2)=0$ ~is given by ~$x^*=nB(\gamma)\in (n/2,n)$ ~which is unique and therefore must be a saddle point of ~$C(x,r_2)$. For ~$r_1<r<r_2$ ~a real solution to ~$P(x,r)=0$ ~does not exist and therefore ~$C(x,r)$ ~does not have any extrema on ~$(0,n)$. For ~$0\leq r'\leq r_1$ ~real solutions do exist but the corresponding roots of ~$P(x,r')$ ~cannot correspond to a maximum or a minimum of ~$C(x,r')$ ~on ~$(0,n)$ ~as discussed in the following. Assume that it is not a saddle point, then we would have a maximum and a minimum because they must occur simultaneously at some ~$n/2\leq x_{\max}<x_{\min}<n$, i.e. ~$C(x_{\max},r')>C(x_{\min},r')$. This implies ~$r'\geq R(x_{\max},x_{\min})$ ~and therefore ~$C(x_{\max},r)>C(x_{\min},r)$ ~for any ~$r>r'$, which contradicts the fact of no extrema for ~$r_1<r<r_2$. We did not exclude the possibility of a saddle point at ~$x_s\in(0,n)$ because for our conclusions it won't cause any problems as long as the function remains strictly increasing on ~$(0,n)/\{x_s\}$.
\\
\\
Assume that  ~$\partial_x C(x_0,r_0)=0$ ~for some ~$n/2<x_0<n$, $r_0>0$. For any ~$\varepsilon> 0$ ~we have
\[
	 \partial_x C(x,r_0+\varepsilon)=\partial_x C(x,r_0)+ \varepsilon  \partial_x F(x)
\] 
with ~$\partial_x F(x)<0$ ~for ~$x\in(n/2,n)$ ~and we know that
\[
	\partial_x C(x,r_0)
\begin{cases}
=0,&  x=x_0,\\
>0,&  0<x<n, \quad x\neq x_0 \quad \text{and ~$x_0$ ~is a saddle point}.
\end{cases}
\]
The first equality ensures that ~$\partial_x C(x_0,r_0+\varepsilon)<0$ ~for any ~$\varepsilon>0$ ~and that ~$C(x,r_0+\varepsilon)$ ~has local extrema ~$n/2\leq x_{\max}<x_0<x_{\min}$. The second inequality ensures for saddle points ~$x_0$ ~that, for any ~$0<\delta<1$ ~we can find an ~$\varepsilon>0$ ~such that ~$x_{\max},x_{\min}\in (x_0-\delta,x_0+\delta)$. Moreover, we know that ~$x_{\max}\downarrow n/2$ ~and that ~$x_{\min}\uparrow n$ ~as ~$r\uparrow\infty$.
\\
\\
The properties discussed above ensure that given ~$n/2 +1< u\leq nB(\gamma)$ ~it suffices to compare ~$C(u;u,n,r)$ ~and ~$C(u-1;u,n,r)$ ~to decide whether a maximum is at ~$u$ ~or not. The remaining assertions follow now by simple analysis.
\end{proof}

\begin{proof}[Proof of Proposition \ref{prop:gamma025}] 
As before, we restrict our considerations to ~$u>n/2$ and the case ~$u<n/2$ ~follows by symmetry. Using the notation of Theorem \ref{thm:locationmaxiid} we consider the quantities ~$R(u,x)$, with a continuous argument ~$x\in(n-u,u)$.

    The case ~$\zeta\in(1/2,3/4]$ ~is already shown in \eqref{eq:asymptotic_expression_gam} in Theorem \ref{thm:representmaxiid} and we continue with ~$\zeta\in(3/4,1)$. If ~$nB(\gamma)<u<n$, the properties in the proof of the previous theorem ensure that for sufficiently large ~$n$, for ~$\gamma\in(0,1/2)$, the differentiable function ~$R(u,x)$ ~must have a local minimum at ~$x_{\min}\in[n/2,nB(\gamma)]$ ~such that ~$C(u,r)>C(x,r)$ ~holds true, for any ~$x<u$ ~in case of ~$r<R(u,x_{\min})$ ~and ~$C(u,r)<C(x,r)$ ~holds true for some ~$x<u$ ~in case of ~$r>R(u,x_{\min})$. Some tedious but straightforward calculations for ~$\gamma=1/4$ ~allow us to solve ~$\partial_x R(u,x)=0$ ~explicitly and to identify the minimum at ~$x_{\min}=n/2+(n^2-un)^{1/2}/2$. Since, ~$x_{\min}$ ~is not necessarily in ~$\mN$ ~we consider $R(sn,x_{\min}+\delta)$ for any ~$\delta\in[-1,1]$, where the limits in $n$ equal \eqref{eq:limitfunc}, and (using the mean value theorem) observe that this convergence is uniform in ~$\delta$. Therefore, ~$R(sn,\lfloor x_{\min} \rfloor)$ ~and ~$R(sn,\lceil x_{\min} \rceil)$ ~have the same limits and the assertion follows since ~$\cR(s,1/4)$ ~corresponds to the former or the latter for each ~$n$. Finally, the smoothness properties follow on applying l'H\^{o}pital's rule.
\end{proof}

\section*{Acknowledgment}
The author wishes to thank Prof. J. G. Steinebach for helpful comments and Christoph Heuser for suggestions to the proof of Theorem \ref{thm:representmaxiid}. The author is also thankful for the valuable comments and suggestions of the anonymous referees that helped to improve the quality of this paper. 
This research was partially supported by the Friedrich Ebert Foundation, Germany. 

{\footnotesize  
\begingroup
\setstretch{0.5}

\endgroup
}
 
\end{document}